\newtheorem{thm}{Theorem}[section]
\newtheorem{lem}[thm]{Lemma}
\newtheorem{prop}[thm]{Proposition}
\newtheorem{defi}[thm]{Definition}
\newtheorem{coro}[thm]{Corollary}
\newtheorem{ex}[thm]{Example}
\newtheorem{remark}[thm]{Remark}
\newtheorem{question}[thm]{Question}
\newtheorem{conjecture}[thm]{Conjecture}
\newcommand{\defn}[1]{{\color{Blue} \bf#1}}
\DeclareMathOperator{\Tam}{Tam}
\title[$(P,\phi)$-Tamari lattices]{$(P,\phi)$-Tamari lattices}
\author[A. Segovia]{Adrien Segovia\thanks{\href{mailto:adrien.segovia@gmail.com}{adrien.segovia@gmail.com}}
}
\address{Université du Québec à Montréal, LACIM, Montréal, Canada}
\abstract{Given any poset $P$ and chain $\phi$ in $P$, we define the $(P,\phi)$-Tamari lattice. We study in depth these lattices and prove in particular that they are join-semidistributive, join-congruence uniform and left modular. 
 We prove that the lattices of higher torsion classes of the higher Auslander and Nakayama algebras of type $\mathbb{A}$ are examples of $(P,\phi)$-Tamari lattices and thus they inherit their properties.
 We also give general results related to left modular, extremal and congruence normal lattices.}  
\keywords{Tamari lattice, left modularity, congruence normality, higher torsion classes}
\begin{document}

\usetikzlibrary{quotes}

\maketitle

\section{Introduction}

The Tamari lattice is familiar to many in combinatorics. It is perhaps most simply described as the lattice of planar binary trees ordered by tree rotation. For our purposes, it is also important to point out that it arises as the lattice of torsion classes for the type $\mathbb{A}_n$ linearly oriented path algebra. Our goal is to present a certain combinatorial generalization of the Tamari lattice which we call the $(P,\phi)$-Tamari lattice, where $P$ is a poset and $\phi$ is a chain in $P$. We establish certain properties for these lattices, including join-semidistributivity, join-congruence uniformity, and left modularity.

The reason that we were inspired to formulate the definition of the $(P,\phi)$-Tamari lattices is that they include, as a very special case, certain lattices which recently appeared in representation theory. For any finite-dimensional algebra, its torsion classes ordered by inclusion form a lattice. These have attracted considerable interest \cite{Demonet_2023,thomas2021introduction}. One property in particular of interest is that they are all semidistributive. There is a generalization due to J\o{}rgensen of torsion classes \cite{jorgensen2016torsion} in the setting of higher homological algebra, known as $d$-torsion classes. 
 Recently the authors of \cite{AHJKPT23} showed that J\o{}rgensen's definition is equivalent to being closed under $d$-extensions and $d$-quotients, thus yielding a lattice ordered by inclusion on these $d$-torsion classes. They obtained a combinatorial description of the $d$-torsion classes for the higher Auslander and Nakayama algebras of type $\mathbb{A}$, which are the two main examples where we are able to compute in higher homological algebra. 
 They noticed that the lattices of $d$-torsion classes of these algebras are not semidistributive in general, but we prove that they are a special case of our construction. Thus they inherit the properties of the $(P,\phi)$-Tamari lattices.

While working on these lattices, we developed a new way to prove the left modularity of a lattice using edge-labellings (\cref{thmlabelling}), and gave necessary and sufficient conditions on the doublings of a congruence normal lattice to be extremal or left modular (\cref{propExtremality,thmCongruencenormal}). These results might be useful to others working on lattices.

In \cref{background} we give some background on posets and lattices. \cref{sectionGeneralLattice} presents new tools to study left modular, congruence normal and extremal lattices.
In \cref{sectionPphiTamari} we define and study the $(P,\phi)$-Tamari lattices. The generalities are given in \cref{sectionGenerality}, proving in particular that they are join-semidistributive (\cref{propJSD}) and in \cref{sectionCongruenceTopo} we prove that they are left modular (\cref{thmleftmodular}), join-congruence uniform (\cref{thmLW}) and we study their congruences. Finally, in \cref{sectionMainExamples} we give the main examples: \cref{subsectionChain} for $P$ a chain, and \cref{sectionAuslander,sectionNakayama} for the $d$-torsion classes of the higher Auslander and Nakayama algebras of type $\mathbb{A}$.

\acknowledgements{I am very grateful to my advisors Samuele Giraudo and Hugh Thomas for their help.}

\section{Background on posets and lattices}
\label{background}

We denote by $|E|$ the cardinality of a set $E$. For a positive integer $n$, denote $[n]:=\{1,2,\dots,n\}$. By $k<n$ we will mean $k\in \{0,1,\dots,n-1\}$.

If a partially ordered set (poset) $(P,\leq)$ has a minimum, it is denoted $\hat{0}$, and $\hat{1}$ for the maximum. The cover relations of $P$ are denoted $x\lessdot y$ and we say that $y$ \defn{covers} $x$ or $x$ \defn{is covered by} $y$. They form the set $E(P)$ of the edges of its Hasse diagram, which is draw with smaller elements at the bottom. The interval of $P$ between $x$ and $y$ is denoted by $[x,y]$. Its Möbius function is written $\mu(x,y)$.
A subset $C$ of $P$ is \defn{convex} if for all $x$ and $y$ in $C$, we have $[x,y]\subseteq C$.   An \defn{order ideal} of a poset $P$ is a subset $I$ such that for all $x,y\in P$, if $y\leq x$ and $x\in I$, then $y\in I$. Dually, an \defn{order filter} is a subset $F$ such that for all $x,y\in P$, if $y\geq x$ and $x\in F$, then $y\in F$. The order ideal generated by a subset $C$ is denoted by $I_P(C):=\{y\in P\mid \exists x\in C,\,y\leq x\}$. If $C=\{x\}$, $I_P(\{x\})$ is a \defn{principal order ideal} and is denoted by $I_P(x)$. 
The poset ordered by inclusion on the order ideals of a poset $P$ is denoted $J(P)$. The number of order ideals of $P$, which is also its number of order filters, is $|J(P)|$. 
We denote by \defn{$C_n$} the chain $0<1<\cdots <n-1$. We also call \defn{chains} the totally ordered subsets of $P$. Equivalently, they are the image of an injective order preserving map $\phi : \{0,1,\dots,n\}\rightarrow P$, and we will use $\phi: \phi(0)< \phi(1) <\cdots <\phi(n)$ to denote them.
In this case, the length of this chain is $n$. If we cannot add elements to a chain, it is called a \defn{maximal chain}. The \defn{length} of a poset, denoted $\ell(P)$, is the maximum length of a chain in $P$. The chains of length $\ell(P)$ are called \defn{longest chains}. The \defn{spine} of a poset is the subset of the elements that lie on any chain of longest length. A \defn{linear extension} of a poset $(P,\leq)$ is a total order $\prec$ on $P$ such that $x\leq y$ implies $x\prec y$.
If $P$ and $Q$ are two posets, then the \defn{direct product} $P\times Q$ is the poset on the cartesian product $P\times Q$ defined by $(x,y)\leq (x',y')$ if $x\leq x'$ and $y\leq y'$.

A \defn{lattice} $L$ is a poset such that any pair of elements $\{x,y\}$ admits a least upper bound, called the \defn{join} and written $x\vee y$, and a greatest lower bound, called the \defn{meet} and written $x\wedge y$. We will denote $\Tam_n$ the Tamari lattice of size $n$ which has cardinality $\frac{1}{n+1} \binom{2n}{n}$.
All posets and lattices considered in this extended abstract are assumed to be finite, thus the following definitions are given for a finite lattice $L$.   
A \defn{join-irreducible} $j\in L$ is an element that covers a unique element, denoted $j_*$, and a \defn{meet-irreducible} $m\in L$ is one that is covered by a unique element.
The sets of these elements are respectively denoted $JIrr(L)$ and $MIrr(L)$. An \defn{edge-labelling} of $L$ is a map $\gamma: E(L) \rightarrow P$ where $P$ is a poset.
It is well known that in a lattice, an element is always the join of the join-irreducibles below it, and it follows that $\ell(L)\leq |\mathrm{JIrr}(L)|$ (or $|\mathrm{MIrr}(L)|$). 

\begin{defi}
    A lattice $L$ is \defn{join-extremal} if $\ell(L)=|\mathrm{JIrr}(L)|$, \defn{meet-extremal} if $\ell(L)=|\mathrm{MIrr}(L)|$ and \defn{extremal} if it is both join and meet-extremal. 
\end{defi}

\begin{defi}
 A lattice $L$ is \defn{join-semidistributive (JSD)} if for all $x,y,z\in L$, we have $x\vee y=x\vee z \Longrightarrow x\vee (y\wedge z)=x\vee y$. It is \defn{meet-semidistributive} if for all $x,y,z\in L$, we have $x\wedge y=x\wedge z \Longrightarrow x\wedge (y\vee z)=x\vee y$. It is \defn{semidistributive (SD)} if it is both join and meet-semidistributive.   
\end{defi}

\begin{lem}[\cite{freese1995free}]
\label{lemJSDjoinmeetSD}
A lattice $L$ is SD if and only if $L$ is JSD and $|\mathrm{JIrr}(L)|=|\mathrm{MIrr}(L)|$ .
\end{lem}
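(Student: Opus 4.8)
The plan is to use the classical $\kappa$-map machinery for semidistributivity. For $j \in \mathrm{JIrr}(L)$ with unique lower cover $j_*$, write $U(j) := \{x \in L : j_* \le x,\ j \not\le x\}$; dually, for $m \in \mathrm{MIrr}(L)$ with unique upper cover $m^*$, write $D(m) := \{x \in L : x \le m^*,\ x \not\le m\}$. Both sets are finite and nonempty (they contain $j_*$ and $m^*$ respectively). I would first recall the following facts, essentially from \cite{freese1995free}: (i) $L$ is meet-semidistributive if and only if $U(j)$ has a greatest element for every $j \in \mathrm{JIrr}(L)$, and dually $L$ is join-semidistributive if and only if $D(m)$ has a least element for every $m \in \mathrm{MIrr}(L)$; (ii) every maximal element of $U(j)$ is meet-irreducible, and every minimal element of $D(m)$ is join-irreducible; (iii) for $j \in \mathrm{JIrr}(L)$ and $m \in \mathrm{MIrr}(L)$, the element $m$ is maximal in $U(j)$ if and only if $j$ is minimal in $D(m)$. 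Each of these is proved by a short argument on cover relations; for instance, for (iii): if $m$ is maximal in $U(j)$, then every $z$ with $z > m$ lies above $j$, which forces $m^* \ge j$ and, since $j_* < j$, also $j_* \le m$, so $j \in D(m)$; and $j$ is minimal there because any $j' < j$ satisfies $j' \le j_* \le m$. The converse direction is symmetric.

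Granting (i)--(iii), I introduce the incidence relation $R \subseteq \mathrm{JIrr}(L) \times \mathrm{MIrr}(L)$ with $(j,m) \in R$ if and only if $m$ is maximal in $U(j)$, equivalently, by (iii), if and only if $j$ is minimal in $D(m)$. By (ii) together with finiteness, for each $j$ the set $\{m : (j,m) \in R\}$ of maximal elements of $U(j)$ is nonempty, and for each $m$ the set $\{j : (j,m) \in R\}$ of minimal elements of $D(m)$ is nonempty. Moreover, by (i), $L$ is meet-semidistributive precisely when every row $\{m : (j,m) \in R\}$ is a singleton, and $L$ is join-semidistributive precisely when every column $\{j : (j,m) \in R\}$ is a singleton. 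The statement then follows by counting $|R|$ in two ways.

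Indeed, if $L$ is semidistributive it is in particular join-semidistributive, and all rows and all columns of $R$ are singletons, whence $|\mathrm{JIrr}(L)| = |R| = |\mathrm{MIrr}(L)|$. Conversely, suppose $L$ is join-semidistributive and $|\mathrm{JIrr}(L)| = |\mathrm{MIrr}(L)|$. Then every column of $R$ is a singleton, so $|R| = |\mathrm{MIrr}(L)| = |\mathrm{JIrr}(L)|$; since the $|\mathrm{JIrr}(L)|$ rows are each nonempty and their cardinalities sum to $|R| = |\mathrm{JIrr}(L)|$, every row is a singleton. Hence $L$ is meet-semidistributive, and therefore semidistributive.

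The substance of the argument is entirely in establishing (i)--(iii): one must handle the possibility that $U(j)$ (resp.\ $D(m)$) has several maximal (resp.\ minimal) elements, verify that such extremal elements are meet- (resp.\ join-) irreducible, and check that the two descriptions of the incidence relation coincide. These are the standard structural lemmas underlying the maps $\kappa$ and $\kappa^d$; once they are in hand, the equivalence in the statement is the elementary double count above. An equivalent packaging is to observe that $\kappa \colon \mathrm{JIrr}(L) \to \mathrm{MIrr}(L)$ and $\kappa^d \colon \mathrm{MIrr}(L) \to \mathrm{JIrr}(L)$ are injective whenever they are defined, and mutually inverse when $L$ is semidistributive; I prefer the incidence-relation phrasing for the cardinality bookkeeping.
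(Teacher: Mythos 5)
Your argument is correct: facts (i)--(iii) are the standard structural lemmas behind the maps $\kappa$ and $\kappa^d$ in Freese--Je\v{z}ek--Nation, your verifications of them are sound, and the double count of the incidence relation $R$ (each row and column nonempty, join-semidistributivity making columns singletons, meet-semidistributivity making rows singletons) correctly yields both directions of the equivalence for finite lattices. The paper gives no proof of this lemma and only cites \cite{freese1995free}; your proof is essentially the standard one from that reference, so there is nothing to reconcile.
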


\begin{lem}
\label{lemEquiJSD}
A lattice $L$ is JSD if and only if for all covers $b\lessdot c$, the set $I_L(c)\setminus I_L(b)$ has a minimum element. In this case, the minimum is a join-irreducible element.
\end{lem}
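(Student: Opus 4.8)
The plan is to work throughout with the set $S_{b,c}:=I_L(c)\setminus I_L(b)=\{e\in L\mid e\le c,\ e\not\le b\}$ attached to a cover $b\lessdot c$, together with one elementary observation: since $b\lessdot c$, every $e\in S_{b,c}$ satisfies $b\vee e=c$, because $b< b\vee e\le b\vee c=c$ forces $b\vee e=c$. Conversely, any $e\le c$ with $b\vee e=c$ is not below $b$, so $S_{b,c}$ is exactly $\{e\le c\mid b\vee e=c\}$. Both implications will be read off from this description.

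For the forward implication, assume $L$ is JSD. I would first show that $S_{b,c}$ is closed under meets: given $e,e'\in S_{b,c}$ we have $b\vee e=b\vee e'=c$, so applying JSD with (in the notation of the definition) $x=b$, $y=e$, $z=e'$ gives $b\vee(e\wedge e')=c$; if we had $e\wedge e'\le b$ this would read $b=c$, a contradiction, so $e\wedge e'\not\le b$, and since obviously $e\wedge e'\le c$ we conclude $e\wedge e'\in S_{b,c}$. As $L$ is finite and $c\in S_{b,c}$, it follows that $m:=\bigwedge S_{b,c}$ is itself an element of $S_{b,c}$, hence its minimum.

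Next I would check that $m$ is join-irreducible. Since $m\not\le b$ we have $m\neq\hat 0$. Suppose $m=p\vee q$ with $p<m$ and $q<m$; then $p,q\le m\le c$, but by minimality of $m$ neither $p$ nor $q$ lies in $S_{b,c}$, which (being $\le c$) forces $p\le b$ and $q\le b$, whence $m=p\vee q\le b$, contradicting $m\in S_{b,c}$. So $m$ is not the join of two strictly smaller elements and is not $\hat 0$, i.e. it covers a unique element. This settles the forward direction and the final sentence of the statement.

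For the converse, assume every $S_{b,c}$ with $b\lessdot c$ has a minimum, take $x,y,z$ with $x\vee y=x\vee z=:w$, set $u:=x\vee(y\wedge z)\le w$, and suppose for contradiction that $u<w$. The one idea needed is to choose the cover \emph{at the top}: a maximal chain from $u$ to $w$ provides a lower cover $b\lessdot w$ with $u\le b<w$. Then $y\le w$ but $y\not\le b$ (otherwise $w=x\vee y\le b$), and likewise $z\not\le b$, so $y,z\in S_{b,w}$; hence the minimum $j$ of $S_{b,w}$ satisfies $j\le y$ and $j\le z$, so $j\le y\wedge z\le u\le b$, contradicting $j\not\le b$. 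Therefore $u=w$, which is precisely JSD. The only non-bookkeeping step is this last choice of cover — taking $c=w$ rather than an arbitrary cover inside $[u,w]$ is what forces both $y$ and $z$ into $I_L(w)\setminus I_L(b)$; everything else is a direct unwinding of the definitions.
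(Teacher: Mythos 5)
Your proof is correct. The paper states \cref{lemEquiJSD} without proof (it is a standard characterization of join-semidistributivity for finite lattices), and your argument is exactly the expected one: the reformulation $I_L(c)\setminus I_L(b)=\{e\le c\mid b\vee e=c\}$ makes meet-closure of this set equivalent to JSD applied at $x=b$, and in the converse direction the choice of a lower cover $b\lessdot w$ of the top element $w=x\vee y$ with $u=x\vee(y\wedge z)\le b$ is precisely the right move; the join-irreducibility of the minimum also follows as you say, since in a finite lattice an element other than $\hat 0$ that is not a join of two strictly smaller elements covers a unique element.
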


  Thus by \cref{lemEquiJSD}, if $L$ is JSD then $\gamma: E(L) \rightarrow \mathrm{JIrr}(L)$ that sends a cover $b\lessdot c$ to $\mathrm{min(}I_L(c)\setminus I_L(b))$ is a well defined edge-labelling.

\begin{defi}
     An element $a\in L$ is \defn{left modular} if for all $b<c$ in $L$, we have $(b\vee a) \wedge c = b\vee (a\wedge c)$. 
    A maximal chain made of left modular elements is called a maximal left modular chain. The lattice $L$ is left modular if there exists a maximal left modular chain. 
\end{defi}

We refer the reader to \cite{Wachs1996SHELLABLENC} for details related to the following topological notions. Denote $\overline{L}:=L\setminus \{\hat{0},\hat{1}\}$. The order complex $\Delta(P)$ of a poset $P$ is the simplicial complex of vertex set $P$ whose faces are the chains of $P$.  An \defn{$EL$-labelling} of a lattice $L$ is an edge-labelling such that in any interval, when reading the labels following the chains from bottom to top, there is a unique maximal increasing chain and the label word of the increasing chain lexicographically precedes the label word of any other maximal chains. If $L$ admits an $EL$-labelling, then its order complex $\Delta(\overline{L})$ is shellable and homotopy equivalent to a wedge of spheres.
Moreover, for all $x$ and $y$ in $L$ we have that $\mu(x,y)$ is given by the difference between the number of even length maximal decreasing chains and the number of odd length maximal decreasing chains. Thus if $L$ has at most one maximal decreasing chain in any interval, then $\mu (x,y)\in \{-1,0,1\}$ for all $x,y\in L$ and the order complex of each non-empty open interval $]x,y[$ has the homotopy type of either a sphere or a point. 
Any left modular lattice admits an $EL$-labelling (see \cref{rmkLiu}).

\begin{defi}
\label{defDoublementDay}
    Let $C$ be a convex subset of $L$. 
    The \defn{doubling} $L[C]$ is the subposet of $L\times C_2$ consisting of the subset
    $\big(I_L(C) \times \{0\}\big) \,\bigsqcup \,\left[\big((L\setminus I_L(C))\cup C\big) \times \{1\} \right]$. It is in fact a lattice.
\end{defi}

See \cref{fig:doublingsLeftmodular} for examples of the doubling construction.
 A subset $C$ is a \defn{lower pseudo-interval} if $C$ is a union of intervals sharing the same minimum element. A lattice $L$ is \defn{congruence normal} if it is obtained from the one element lattice by successive doublings of convex subsets \cite{day1994congruence}. If at each step we double a lower pseudo-interval then $L$ is \defn{join-congruence uniform} (often called \textit{lower-bounded} in the literature). If we use only doublings of intervals, $L$ is called \defn{congruence uniform}.

A (lattice) \defn{congruence} on $L$ is an equivalence relation $\equiv$  on $L$ such that for all $ x_1,x_2,y_1,y_2$ in $L$, we have that $x_1\equiv x_2$ and $y_1\equiv y_2$ imply both $x_1\wedge y_1 \equiv x_2\wedge y_2$ and $x_1\vee y_1 \equiv x_2\vee y_2$. We identify the congruences $\equiv$ with the set of join-irreducibles $j$ that they contract, meaning $j_*\equiv j$ (two congruences are the same exactly when these sets are the same). 
 Let $D$, called the \defn{join dependency relation}, be the binary relation on $\mathrm{JIrr}(L)$ defined by $p D q$ if $p\neq q$ and there exists $x\in L$ such that $p\leq q \vee x$ and $p \not \leq q_* \vee x$. 
A \defn{$D$-cycle} is a sequence of elements $a_1,\,a_2,\,\dots,a_k$ with $k\geq 2$ such that $a_1 D a_2 D \cdots D a_k D a_1$.

\begin{prop}[\cite{freese1995free}]
\label{propDayLW}
    The lattice $L$ is join-congruence uniform if and only if it contains no $D$-cycles. In this case, the congruences of $L$ correspond to the subsets $T \subseteq \mathrm{JIrr}(L)$ such that if $a D b$ and $b\in T$, then $a\in T$.
\end{prop}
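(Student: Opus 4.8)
I would establish the description of the congruences first; the characterisation via $D$-cycles then reduces to a theorem of Nation. Recall that a congruence $\theta$ is recorded by the set $S(\theta):=\{j\in\mathrm{JIrr}(L):j_*\equiv_\theta j\}$ of join-irreducibles it contracts, and that $\mathrm{Con}(L)$ is distributive (Funayama--Nakayama). For $j\in\mathrm{JIrr}(L)$ write $\kappa(j):=\mathrm{con}(j_*,j)$ for the least congruence contracting $j$. Two preliminary facts are needed. First, each $\kappa(j)$ is join-irreducible in $\mathrm{Con}(L)$: over the cover $j_*\lessdot j$ a join $\bigvee_i\theta_i$ of congruences contains the pair $(j_*,j)$ only when some $\theta_i$ already does, since a $\bigcup_i\theta_i$-chain from $j_*$ to $j$ can be squeezed into the two-element interval $[j_*,j]$, so $\kappa(j)$ has a unique lower cover. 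Second, conversely every join-irreducible congruence equals some $\kappa(j)$: take a prime quotient $[x,y]$ separating it from its lower cover and replace $[x,y]$ by $[j\wedge x,j]$, where $j$ is minimal among elements $\le y$ not below $x$; such $j$ is join-irreducible with $j\wedge x=j_*$. By Birkhoff's theorem, $\mathrm{Con}(L)\cong J\bigl(\mathrm{JIrr}(\mathrm{Con}(L))\bigr)$ via $\theta\mapsto\{\kappa(j):\kappa(j)\le\theta\}$.

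The crucial input is the comparison lemma: for $a,b\in\mathrm{JIrr}(L)$,
\[\kappa(a)\le\kappa(b)\quad\Longleftrightarrow\quad a=b\ \text{ or }\ a\mathrel{D^*}b,\]
where $D^*$ is the reflexive--transitive closure of $D$. The implication ``$\Leftarrow$'' is a short computation: if $aDb$ is witnessed by $x$ and $\theta$ contracts $b$, then meeting $b_*\vee x\equiv_\theta b\vee x$ with $a$ gives $a\equiv_\theta a\wedge(b_*\vee x)$, which is $\le a_*$ because $a$ is join-irreducible and $a\not\le b_*\vee x$; meeting once more with $a_*$ yields $a\equiv_\theta a_*$. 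Hence $aDb$ forces $\kappa(a)\le\kappa(b)$, and composing along a $D$-chain disposes of $D^*$. I expect ``$\Rightarrow$'' to be the main obstacle. It rests on the standard description of principal congruences: $(b_*,b)\in\mathrm{con}(a_*,a)$ precisely when the prime interval $[b_*,b]$ is connected to $[a_*,a]$ by a finite zigzag of up/down perspectivities of prime intervals; after pushing each intermediate prime interval down to a canonical $[k_*,k]$ with $k$ join-irreducible, one reads off from the zigzag a chain witnessing $b\mathrel{D^*}a$ — each normalised perspectivity step yields exactly a $D$-relation, which is precisely how the definition of $D$ is reverse-engineered.

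Granting the lemma, the correspondence follows. If $b\in S(\theta)$ and $aDb$, then $\kappa(a)\le\kappa(b)\le\theta$, so $a\in S(\theta)$; hence every $S(\theta)$ satisfies the closure rule ``$aDb$ and $b\in T$ imply $a\in T$''. Conversely, given such a $T$, set $\theta_T:=\bigvee_{b\in T}\kappa(b)$: then $T\subseteq S(\theta_T)$, and if $a\in S(\theta_T)$ then $\kappa(a)\le\theta_T$, so by distributivity $\kappa(a)\le\kappa(b)$ for some $b\in T$ (a join-irreducible lies below a join only if it lies below one of the joinands), whence $a\mathrel{D^*}b$ or $a=b$, so $a\in T$. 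Thus $\theta\mapsto S(\theta)$ is onto the family of such $T$, and it is injective because $\theta=\bigvee_{j\in S(\theta)}\kappa(j)$. Equivalently, writing $a\approx b$ for $\kappa(a)=\kappa(b)$, i.e.\ for $a=b$ or ($a\mathrel{D^*}b$ and $b\mathrel{D^*}a$), one obtains $\mathrm{Con}(L)\cong J\bigl(\mathrm{JIrr}(L)/{\approx}\bigr)$.

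It remains to see that $L$ is join-congruence uniform exactly when $D$ has no cycle. By the lemma, $D$ has no cycle iff $\approx$ is trivial, iff $j\mapsto\kappa(j)$ is injective, iff $D^*$ is a partial order on $\mathrm{JIrr}(L)$. That this is equivalent to join-congruence uniformity is Nation's theorem, proved by induction on $|L|$. One direction is that doubling a lower pseudo-interval does not create a $D$-cycle: the new join-irreducibles it introduces cannot lie on a $D$-cycle, so an $L'$ without $D$-cycles yields an $L=L'[C]$ without $D$-cycles. For the other, given $L$ with no $D$-cycle choose $j$ with $\kappa(j)$ an atom of $\mathrm{Con}(L)$ (possible by the second preliminary fact); the absence of $D$-cycles forces every class of $\kappa(j)$ to have at most two elements and forces their images in $L/\kappa(j)$ to form a lower pseudo-interval $C$, so $L\cong (L/\kappa(j))[C]$ and one applies induction to the smaller lattice $L/\kappa(j)$, which is still free of $D$-cycles. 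Controlling the classes of the atom congruence $\kappa(j)$ under this hypothesis is the delicate step of Nation's argument and may be quoted from \cite{freese1995free}.
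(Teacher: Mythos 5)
The paper does not actually prove \cref{propDayLW}: it is imported wholesale from Freese--Je\v{z}ek--Nation \cite{freese1995free}, so there is no in-paper argument to compare yours against. Judged on its own, your reconstruction follows the standard route of that reference and the parts you work out in detail are correct: the squeezing argument showing each $\kappa(j)=\mathrm{con}(j_*,j)$ is join-prime, the normalisation of a prime quotient $[x,y]$ to $[j_*,j]$ with $j$ minimal below $y$ and not below $x$, the computation showing that $a\,D\,b$ forces $\kappa(a)\le\kappa(b)$ (which is exactly the closure rule in the statement), and the derivation of the bijection between congruences and $D$-closed subsets of $\mathrm{JIrr}(L)$ from distributivity of $\mathrm{Con}(L)$. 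You are also right that this bijection needs no hypothesis on $D$-cycles. However, be aware that the two steps you defer are precisely where the substance of the theorem lives: (i) the converse implication $\kappa(a)\le\kappa(b)\Rightarrow a\,D^{*}\,b$, for which ``each normalised perspectivity step yields exactly a $D$-relation'' is a slogan rather than an argument (a downward prime perspectivity must be massaged into a witness $x$ with $a\le b\vee x$, $a\not\le b_*\vee x$, and the intermediate join-irreducibles must be tracked); and (ii) the claim that, absent $D$-cycles, a suitable atom $\kappa(j)$ has all classes of size at most two with the doubled set forming a lower pseudo-interval --- this is the heart of the Day--Nation doubling theorem and is not forced merely by $\kappa(j)$ being an atom. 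Quoting these from \cite{freese1995free} is legitimate here, since the proposition itself is a quoted result, but the proposal should be read as a faithful outline of the known proof rather than a self-contained one.
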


\section{New results on lattices}
\label{sectionGeneralLattice}

In this section, we give some general results on lattices. First about left modular lattices. Here the edge-labellings are maps from $E(L)$ to $\mathbb{N}$.

\begin{defi}
\label{deflabelling}
Let $L$ be a lattice. Denote $\phi :\, \hat{0}=x_0 <\dots <x_k=\hat{1}$ a chain containing $\hat{0}$ and $\hat{1}$.
 For $j\in \mathrm{JIrr}(L)$, denote $\delta(j):=\mathrm{min}\{i\,|\,j\leq x_i\}$. For $m\in \mathrm{MIrr}$, denote $\beta(m):=\mathrm{max}\{i\,|\,m\geq x_{i-1}\}$. We define $4$ edge-labellings ; for a cover relation $b\lessdot c$  (see \cref{fig:sfig1})
 \begin{align*}
    \gamma_1 (b\lessdot c) &:= \mathrm{min}\{ \delta(j) \,|\, j\in \mathrm{JIrr}(L),\,j\leq c,\,j\not\leq b\}, \quad\quad\quad\,\,
    \gamma_2 (b\lessdot c) := \mathrm{min}\{i \,|\, b \vee x_i \geq c\}, \\
    \gamma_3 (b\lessdot c) &:= \mathrm{max}\{ \beta(m) \,|\, m\in \mathrm{MIrr}(L),\,m\geq b,\,m\not\geq c\},  \,\,\,
    \gamma_4 (b\lessdot c) := \mathrm{max}\{i \,|\, c \wedge x_{i-1} \leq b\}. 
\end{align*}
\end{defi}

\begin{thm}
\label{thmlabelling}
    For any lattice $L$, we have $\gamma_2 = \gamma_3 \leq \gamma_1 = \gamma_4$. Moreover $\gamma_2 = \gamma_4$ if and only if for all $i$, $x_i$ is left modular.
\end{thm}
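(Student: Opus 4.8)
My plan is to first rewrite $\gamma_1$ and $\gamma_3$ so they have the same shape as $\gamma_2$ and $\gamma_4$, then extract the two equalities and the inequality $\gamma_2\le\gamma_1$ from one monotonicity observation, and finally treat the left-modularity clause on its own. Fix a cover $b\lessdot c$. I claim that $\gamma_1(b\lessdot c)=\min\{i\mid c\wedge x_i\not\le b\}$ and, order-dually, $\gamma_3(b\lessdot c)=\max\{i\mid b\vee x_{i-1}\not\ge c\}$. Indeed, there is a join-irreducible $j$ with $j\le c$, $j\not\le b$, $j\le x_i$ exactly when $c\wedge x_i\not\le b$: one direction is immediate, and for the other, $c\wedge x_i$ is the join of the join-irreducibles below it, so if all of them were $\le b$ we would have $c\wedge x_i\le b$. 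Re-indexing the double minimum defining $\gamma_1$ (its inner minimum is $\delta(j)$) gives the formula, and $\gamma_3$ is the exact dual, using that $b\vee x_{i-1}$ is the meet of the meet-irreducibles above it. Both index sets are nonempty since $c\wedge x_k=c\not\le b$ and $b\vee x_0=b\not\ge c$.

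Now the maps $i\mapsto c\wedge x_i$ and $i\mapsto b\vee x_i$ are order-preserving, with $c\wedge x_0=\hat 0\le b$, $c\wedge x_k=c\not\le b$, $b\vee x_0=b\not\ge c$ and $b\vee x_k=\hat 1\ge c$, so each has a threshold; calling these $t$ and $s$, one reads off directly that $\gamma_1(b\lessdot c)=t=\max\{i\mid c\wedge x_{i-1}\le b\}=\gamma_4(b\lessdot c)$ and $\gamma_2(b\lessdot c)=s=\max\{i\mid b\vee x_{i-1}\not\ge c\}=\gamma_3(b\lessdot c)$. For $\gamma_2\le\gamma_1$: if $c\wedge x_i\not\le b$ then $b\vee(c\wedge x_i)$ lies strictly above $b$ inside the length-one interval $[b,c]$, hence equals $c$, so $b\vee x_i\ge b\vee(c\wedge x_i)=c$ and therefore $\gamma_2(b\lessdot c)\le i$; minimizing over all such $i$ yields $\gamma_2\le\gamma_1$.

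It remains to show that $\gamma_2=\gamma_4$ holds on every cover if and only if every $x_i$ is left modular. Since $\gamma_2\le\gamma_4$ always, equality can fail on a cover $b\lessdot c$ only if, taking $i=\gamma_2(b\lessdot c)<\gamma_1(b\lessdot c)$, we have both $b\vee x_i\ge c$ and $c\wedge x_i\le b$. If each $x_i$ is left modular this is impossible: the left-modular law of $x_i$ applied to $b<c$ gives $c=(b\vee x_i)\wedge c=b\vee(x_i\wedge c)=b$, a contradiction. Conversely, if some $x_i$ is not left modular, pick $u<v$ with $(u\vee x_i)\wedge v>u\vee(x_i\wedge v)$, set $b:=u\vee(x_i\wedge v)$ and $c:=(u\vee x_i)\wedge v$ so that $b<c$, and compute $b\vee x_i=u\vee x_i\ge c$ and $c\wedge x_i=x_i\wedge v\le b$. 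Choosing any cover $b\le b'\lessdot c'\le c$, monotonicity gives $b'\vee x_i\ge b\vee x_i\ge c\ge c'$ and $c'\wedge x_i\le c\wedge x_i\le b\le b'$, hence $\gamma_2(b'\lessdot c')\le i<\gamma_1(b'\lessdot c')=\gamma_4(b'\lessdot c')$, contradicting $\gamma_2=\gamma_4$.

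The only genuine obstacle I anticipate is this converse implication: left modularity quantifies over all pairs, whereas the labellings only see covers, so one must manufacture a cover that witnesses the failure. The device is to replace $(u,v)$ by the ``corner'' pair $\bigl(u\vee(x_i\wedge v),\,(u\vee x_i)\wedge v\bigr)$ — a standard maneuver — and then to observe that the two inequalities $b'\vee x_i\ge c'$ and $c'\wedge x_i\le b'$ are monotone enough to survive passage to a sub-cover; verifying those monotonicities (together with the four boundary evaluations in the threshold argument) is where all the care is needed, the rest being formal.
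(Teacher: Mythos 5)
Your proof is correct and complete: the reformulations $\gamma_1(b\lessdot c)=\min\{i\mid c\wedge x_i\not\le b\}$ and $\gamma_3(b\lessdot c)=\max\{i\mid b\vee x_{i-1}\not\ge c\}$ are justified properly (via elements being joins of join-irreducibles below them, resp.\ meets of meet-irreducibles above them), the threshold argument gives $\gamma_1=\gamma_4$, $\gamma_2=\gamma_3$ and $\gamma_2\le\gamma_1$, and the converse direction correctly manufactures a witnessing cover from the corner pair $\bigl(u\vee(x_i\wedge v),(u\vee x_i)\wedge v\bigr)$, with the two monotone inequalities surviving the passage to a sub-cover exactly as you check. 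The extended abstract omits the proof of this theorem, so there is no written argument to compare against, but your route is precisely the ``easier proof'' the paper's remark alludes to (Liu's inequality $\gamma_2\le\gamma_1$ plus the new equivalence with left modularity), and I see no gap.
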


\begin{remark}
\label{rmkLiu}
 S.-C. Liu proved in \cite{LiuLeftmodular}, for the case of a longest chain $\phi$, that $\gamma_2\leq \gamma_1$ and that if for all $i$, $x_i$ is left modular, then $\gamma_2=\gamma_1$. He also proved that the equal labellings that we obtain starting from a maximal left modular chain is an $EL$-labelling. What we add to the story, other than an easier proof, is a way to use these labellings to prove that lattices are left modular.
\end{remark}

\begin{figure}
\begin{subfigure}{.5\textwidth}
  \centering
  \begin{tikzpicture}
\begin{scope}[scale= 0.8]
      \draw (0,0)--(1,2)--(0,4)--(-1,2.8)--(-1,1.2)--(0,0);
      \draw (0,0) node[below]{$x_0$};
      \draw (0,0) node[red]{$\bullet$};
      \draw (0,4) node[above]{$x_3$};
      \draw (0,4) node[red]{$\bullet$};
      \draw (-1,2.8) node[left]{$x_2$};
      \draw (-1,2.8) node[red]{$\bullet$};
      \draw (-1,1.2) node[left]{$x_1$};
      \draw (-1,1.2) node[red]{$\bullet$};

      \draw (0.5,1) node[below right]{$3$};
    \draw (0.5,1) node[above left]{$3$};
      \draw (0.5,3) node[above right]{$1$};
    \draw (0.5,3) node[below left]{$1$};
      \draw (-0.6,3.4) node[above left]{$3$};
    \draw (-0.6,3.4) node[below right]{$3$};
      \draw (-1,2) node[left]{$2$};
    \draw (-1,2) node[right]{$2$};
      \draw (-0.5,0.6) node[below left]{$1$};
    \draw (-0.5,0.6) node[above right]{$1$};
\end{scope}

\begin{scope}[xshift = 4cm, scale=0.9]
    \draw (0,0)--(2,1)--(0,5)--(-2,1)--(0,0);
    \draw (0,0)--(0,1)--(1,3);
    \draw (0,1)--(-1,3);
    
    \draw (0,0) node[red]{$\bullet$};
    \draw (2,1) node[red]{$\bullet$};
    \draw (1,3) node[red]{$\bullet$};
    \draw (0,5) node[red]{$\bullet$};

    \draw (0,0) node[below]{$x_0$};
    \draw (2,1) node[right]{$x_1$};
    \draw (1,3) node[above right]{$x_2$};
    \draw (0,5) node[above]{$x_3$};

    \draw[blue, ultra thick] (-2,1)--(-1,3);
    \draw (-2,1) node[left,blue]{$b$};
    \draw (-1,3) node[above left,blue]{$c$};

    \draw (1,0.5) node[below right]{$1$};
    \draw (1,0.5) node[above left]{$1$};
    \draw (0,0.5) node[right]{$2$};
    \draw (0,0.5) node[left]{$2$};
    \draw (1.5,2) node[above right]{$2$};
    \draw (1.5,2) node[below left]{$2$};
    \draw (-1.5,2) node[above left, blue]{$2$};
    \draw (-1.5,2) node[below right, blue]{$1$};
    \draw (0.5,2) node[above left]{$1$};
    \draw (0.5,2) node[below right]{$1$};
    \draw (-0.5,4) node[above left]{$1$};
    \draw (-0.5,4) node[below right]{$1$};
    \draw (0.5,4) node[above right]{$3$};
    \draw (0.5,4) node[below left]{$3$};
    \draw (-0.5,2) node[above right]{$3$};
    \draw (-0.5,2) node[below left]{$3$};
    \draw (-1,0.5) node[above right]{$3$};
    \draw (-1,0.5) node[below left]{$3$};
\end{scope}
\end{tikzpicture}
  
  \caption{$\gamma_2$ at the right of an edge, $\gamma_4$ at the left.\\
 On the right $x_1$ is not left modular \\
 because of $b\lessdot c$.}
  \label{fig:sfig1}
\end{subfigure}%
\begin{subfigure}{.5\textwidth}
  \centering
  \begin{tikzpicture}
    \begin{scope}[scale=0.7]
     \draw (0,0)--(2,2)--(2,4)--(0,6)--(-2,4)--(-2,2)--(0,0);
     \draw (-2,2)--(2,4);
     \draw (2,2)--(-2,4);
     \draw[blue, ultra thick] (0,0)--(-2,2)--(-2,4)--(0,6);
     \draw (-2.5,3) node[left,blue]{$\phi$};
     \draw[red] (2,3) ellipse (0.6cm and 1.7cm);
     \draw (3,2) node[red]{$C$};
     \draw (2,2) node[red]{$\bullet$};
     \draw (2,4) node[red]{$\bullet$};
     \end{scope}
    \end{tikzpicture}
  \caption{The elements of the blue maximal chain $\phi$ are all comparable to at least one element in $C$.}
  \label{fig:sfig2}
\end{subfigure}
\caption{}
\label{fig:fig}
\end{figure}

 Using this approach, we get a simpler proof of the following.

\begin{coro}[Theorem $1.4$ of \cite{Thomas_2019}]
\label{CorExtremalSD}
Semidistributive extremal lattices are left modular.
\end{coro}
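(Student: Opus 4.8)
The plan is to build a maximal left modular chain out of a longest chain and apply \cref{thmlabelling}. Let $\phi:\ \hat 0 = x_0 \lessdot x_1 \lessdot \cdots \lessdot x_k = \hat 1$ be a longest chain of $L$; being longest it is saturated and runs from $\hat 0$ to $\hat 1$, hence is a maximal chain, and extremality gives $k = \ell(L) = |\mathrm{JIrr}(L)| = |\mathrm{MIrr}(L)|$. By \cref{thmlabelling}, $\gamma_2 = \gamma_4$ for $\phi$ is equivalent to every $x_i$ being left modular (which then means $L$ is left modular); since $\gamma_2 = \gamma_3 \le \gamma_1 = \gamma_4$ always holds, the whole content is the reverse inequality $\gamma_1 \le \gamma_3$ on every cover.

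First I would make $\gamma_1$ and $\gamma_3$ explicit. As $L$ is JSD, \cref{lemEquiJSD} attaches to each cover $b \lessdot c$ the join-irreducible $j_{b,c} := \min\bigl(I_L(c)\setminus I_L(b)\bigr)$; every join-irreducible that is $\le c$ but not $\le b$ lies in $I_L(c)\setminus I_L(b)$, hence above $j_{b,c}$, and $\delta$ is order-preserving, so $\gamma_1(b\lessdot c) = \delta(j_{b,c})$. Dually to \cref{lemEquiJSD}, since $L$ is meet-semidistributive, $\{y\in L : y\ge b,\ y\not\ge c\}$ has a maximum $m_{b,c}$, necessarily meet-irreducible, and $\gamma_3(b\lessdot c) = \beta(m_{b,c})$; one records the routine facts $m_{b,c}\not\ge c$ and $(m_{b,c})^{*}\ge c$ (writing $m^{*}$ for the element covering a meet-irreducible $m$), immediate from $b\lessdot c$ and maximality. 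Applying this to the covers of $\phi$ produces $j_i := j_{x_{i-1},x_i}$ and $m_i := m_{x_{i-1},x_i}$ with $\delta(j_i)=i=\beta(m_i)$; the maps $i\mapsto j_i$ and $i\mapsto m_i$ are injective, so by extremality they are bijections $[k]\to\mathrm{JIrr}(L)$ and $[k]\to\mathrm{MIrr}(L)$. In particular every $j\in\mathrm{JIrr}(L)$ equals $j_{\delta(j)}$ and every $m\in\mathrm{MIrr}(L)$ equals $m_{\beta(m)}$.

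Now fix a cover $b \lessdot c$ and set $i := \gamma_1(b\lessdot c) = \delta(j_{b,c})$, $i' := \gamma_3(b\lessdot c) = \beta(m_{b,c})$, so $j_{b,c} = j_i$ and $m_{b,c} = m_{i'}$; we already know $i' \le i$ and must show $i \le i'$. Here I would use the standard map $\nu$ of the finite semidistributive lattice $L$ (see \cite{freese1995free}): for $m\in\mathrm{MIrr}(L)$ the set $\{x\in L : x\le m^{*},\ m\not\ge x\}$ has a minimum $\nu(m)$ (in particular $m\not\ge\nu(m)$), and for every $x$ one has $m\ge x$ iff $x\le m^{*}$ and $\nu(m)\not\le x$. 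A short check on $\phi$ gives $\nu(m_{i'}) = j_{i'}$: first $j_{i'}\le x_{i'}\le (m_{i'})^{*}$ and $m_{i'}\not\ge j_{i'}$ (else $m_{i'}\ge x_{i'-1}\vee j_{i'}=x_{i'}$), so $\nu(m_{i'})\le j_{i'}$; then $\nu(m_{i'})\le j_{i'}\le x_{i'}$ while $\nu(m_{i'})\not\le x_{i'-1}$ (otherwise $\nu(m_{i'})\le x_{i'-1}\le m_{i'}$), so $\nu(m_{i'})\in I_L(x_{i'})\setminus I_L(x_{i'-1})$ and therefore $\nu(m_{i'})\ge j_{i'}$. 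Applying the characterisation with $x = c$: from $m_{i'} = m_{b,c}\not\ge c$ and $c\le (m_{i'})^{*} = (m_{b,c})^{*}$ we get $j_{i'} = \nu(m_{i'})\le c$; moreover $j_{i'}\not\le b$, since $j_{i'}\le b\le m_{i'}$ would contradict $m_{i'}\not\ge\nu(m_{i'})$. Hence $j_{i'}\in I_L(c)\setminus I_L(b)$, so $j_{b,c}\le j_{i'}$, giving $i = \delta(j_{b,c})\le\delta(j_{i'}) = i'$. This proves $\gamma_1 = \gamma_3$ on every cover, hence $\gamma_2 = \gamma_4$, and \cref{thmlabelling} makes every $x_i$ left modular; as $\phi$ is a maximal chain, $L$ is left modular.

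The step I expect to be the crux is the symmetry $\nu(m_{i'}) = j_{i'}$ tying the meet-side labelling $\gamma_3$ to the join-side labelling $\gamma_1$ along the chosen longest chain: this is exactly where both hypotheses are genuinely used — extremality, to upgrade the cover-labellings into the two bijections $i\mapsto j_i$ and $i\mapsto m_i$, and semidistributivity, to have a well-defined $\nu$ together with its characterisation — while everything around it is a routine bookkeeping of covers feeding into \cref{thmlabelling}.
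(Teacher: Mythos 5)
Your proof is correct and follows the same strategy as the paper: reduce to showing $\gamma_1=\gamma_3$ along a longest chain and then invoke \cref{thmlabelling}. The only difference is that you prove in full, via the map $\nu$ and the identity $\nu(m_{i'})=j_{i'}$, the fact that the join- and meet-irreducible labellings of a semidistributive extremal lattice agree index-wise, which the paper simply asserts as a known consequence of semidistributivity.
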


\begin{proof}
Since $L$ is extremal, the choice of any longest chain $\phi: \,\hat{0}=x_0 \lessdot \dots \lessdot x_n=\hat{1}$ gives a numbering of the join and meet-irreducibles $j_1,j_2,\dots,j_n$ and $m_1,m_2,\dots,m_n$ such that 
$x_i=j_1\vee \dots \vee j_i = m_{i+1}\wedge \dots \wedge m_n$.
 The semidistributivity condition gives two equal labellings of the cover relations $\gamma (b\lessdot c) := i$ if $\mathrm{min}(I_L(c)\setminus I_L(b)) =j_i$ or equivalently if $\mathrm{max}(F_L(b)\setminus F_L(c)) =m_i$. These two labellings are respectively $\gamma_1$ and $\gamma_3$. Thus $\gamma_1=\gamma_3$ and using \cref{thmlabelling} we obtain that $L$ is left modular.
\end{proof}

We now turn our attention to congruence normal lattices, characterizing those that are extremal or left modular by necessary and sufficient conditions on each doubling.

\begin{prop}
\label{propExtremality}
     Let $L$ be a congruence normal lattice. Then $L$ is join-extremal if and only if it is join-congruence uniform and at each doubling step we double a lower pseudo-interval that contains an element of the spine. 
     The lattice $L$ is extremal if and only if it is congruence uniform and at each doubling step we double an interval that contains an element of the spine.
\end{prop}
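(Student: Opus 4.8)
The plan is to fix, using congruence normality, a building $L_0,L_1,\dots,L_N=L$ in which $L_0$ is the one-element lattice and $L_{i+1}=L_i[C_i]$ for nonempty convex sets $C_i\subseteq L_i$, and to control how $\ell$, $|\mathrm{JIrr}|$ and $|\mathrm{MIrr}|$ change in one doubling step $L\mapsto L[C]$. First I would prove a length lemma: $\ell(L)\le\ell(L[C])\le\ell(L)+1$, with $\ell(L[C])=\ell(L)+1$ if and only if $C$ contains a spine element of $L$. The idea is that the forgetful map $\pi\colon L[C]\to L$ carries maximal chains to maximal chains and, conversely, every maximal chain $\sigma$ of $L$ lifts to a maximal chain of $L[C]$; since $C$ is convex, the members of $\sigma$ lying in $C$ form a single consecutive block, so the lift is longer by exactly $1$ when $\sigma$ meets $C$ and has the same length otherwise. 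Thus $\ell(L[C])=\max_{\sigma}\big(\ell(\sigma)+[\,\sigma\cap C\neq\emptyset\,]\big)$, and the lemma follows because $C$ meets some longest chain of $L$ precisely when $C$ contains a spine element.

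Next I would prove an irreducibles lemma: $|\mathrm{JIrr}(L[C])|=|\mathrm{JIrr}(L)|+|\mathrm{Min}(C)|$ and, dually, $|\mathrm{MIrr}(L[C])|=|\mathrm{MIrr}(L)|+|\mathrm{Max}(C)|$, where $\mathrm{Min}(C)$ and $\mathrm{Max}(C)$ denote the sets of minimal, resp.\ maximal, elements of $C$. On the join side this is a direct inspection of the Hasse diagram of $L[C]$: for each $m\in\mathrm{Min}(C)$ the new element $(m,1)$ covers only $(m,0)$, hence is join-irreducible; these $|\mathrm{Min}(C)|$ elements are pairwise distinct and new; exactly one copy of each join-irreducible of $L$ remains join-irreducible; and no other element of $L[C]$ is join-irreducible. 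Since a nonempty convex set is a lower (resp.\ upper) pseudo-interval exactly when it has a unique minimal (resp.\ maximal) element, this gives $|\mathrm{JIrr}(L[C])|\ge|\mathrm{JIrr}(L)|+1$ with equality iff $C$ is a lower pseudo-interval, and dually for $\mathrm{MIrr}$. I expect this irreducibles lemma — precisely the claim that exactly the copies of the old irreducibles, and nothing else, stay irreducible — to be the main technical obstacle; everything else is then formal bookkeeping along the building.

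For the join-extremal equivalence I would argue as follows. If $L$ has a building by lower pseudo-intervals $C_i$ with each $C_i$ meeting the spine of $L_i$, then the two lemmas give $\ell(L_{i+1})=\ell(L_i)+1$ and $|\mathrm{JIrr}(L_{i+1})|=|\mathrm{JIrr}(L_i)|+1$ at every step, so $\ell(L)=N=|\mathrm{JIrr}(L)|$ and $L$ is join-extremal. Conversely, along an arbitrary building the lemmas yield $\ell(L)\le N$ and $|\mathrm{JIrr}(L)|\ge N$; hence join-extremality forces $\ell(L)=|\mathrm{JIrr}(L)|=N$, and then the irreducibles lemma forces every $C_i$ to be a lower pseudo-interval — so $L$ is join-congruence uniform — while the length lemma forces $\ell$ to increase strictly at each step, i.e.\ every $C_i$ to contain a spine element of $L_i$.

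Finally, for the extremal equivalence I would run the previous argument together with its order-dual on one and the same building, using $(L[C])^{\mathrm{op}}\cong L^{\mathrm{op}}[C]$, the fact that a lower pseudo-interval of $L^{\mathrm{op}}$ is an upper pseudo-interval of $L$, and the self-duality of the spine. If $L$ is extremal then $\ell(L)=|\mathrm{JIrr}(L)|=N=|\mathrm{MIrr}(L)|$, so each $C_i$ is simultaneously a lower and an upper pseudo-interval — hence the interval between its unique minimum and its unique maximum — and contains a spine element of $L_i$; in particular every doubling is an interval, so $L$ is congruence uniform. The converse direction is exactly the ``only if'' part of the join-extremal statement and its order-dual, applied to a building by intervals each meeting the spine.
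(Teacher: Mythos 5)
Your proposal is correct, and it is the natural inductive argument that the paper's framework sets up (the paper itself omits the proof of this proposition): track $\ell$, $|\mathrm{JIrr}|$ and $|\mathrm{MIrr}|$ along a doubling sequence, using that $\ell(L[C])-\ell(L)\in\{0,1\}$ with increment $1$ exactly when $C$ meets the spine, and that $|\mathrm{JIrr}(L[C])|=|\mathrm{JIrr}(L)|+|\mathrm{Min}(C)|$; both lemmas check out against \cref{defDoublementDay}, and the bookkeeping then forces each doubled set to have a unique minimal (resp.\ also maximal) element and to meet the spine. Two small points to make explicit in a full write-up: the convention that each doubled convex set is nonempty (empty doublings are trivial and would break the count $N=|\mathrm{JIrr}(L)|$), and the self-duality $(L[C])^{\mathrm{op}}\cong L^{\mathrm{op}}[C]$, which is not immediate from the asymmetric formula in \cref{defDoublementDay} (it uses convexity of $C$ to see that $I_L(C)\cap F_L(C)=C$) but is a one-paragraph verification.
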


In the sequel $C$ is a convex subset of $L$ and smaller and bigger refer to weak relations.

\begin{defi}
Let us call the \defn{heart} of $C$, written $H(C)$, the set of elements of $C$ that are smaller than all the maximal elements of $C$ and bigger than all the minimal elements of $C$.
\end{defi}

\begin{thm}
\label{thmCongruencenormal}
Let $L$ be a congruence normal lattice. Then $L$ is left modular if and only if at each doubling step by $C$ we have that $H(C)$ has an element that lies on a maximal left modular chain. 
\end{thm}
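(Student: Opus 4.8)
The plan is to fix a realisation of $L$ as an iterated doubling $\mathbf 1=L_0,L_1,\dots,L_n=L$ with $L_i=L_{i-1}[C_i]$ and to induct on $n$. Everything reduces to a single--doubling lemma: \emph{for a congruence normal lattice $L'$ and a convex subset $C\subseteq L'$, the doubling $L'[C]$ is left modular if and only if $L'$ is left modular and $H(C)$ contains an element lying on a maximal left modular chain of $L'$.} Granting this, both implications of the theorem follow immediately: if the heart condition holds at each step, then $L_{i-1}$ left modular (induction) together with the step-$i$ condition gives $L_i$ left modular; and if $L=L_n$ is left modular, then the lemma applied at the top step yields both that $L_{n-1}$ is left modular and that the condition holds at step $n$, after which induction handles the lower steps. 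As a byproduct the condition turns out to be independent of the chosen doubling sequence.

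For the lemma I would first record a general fact: a surjective lattice homomorphism carries left modular elements to left modular elements. Indeed, for such $f\colon M\to M'$ and $a\in M$ left modular, given $b'\le c'$ in $M'$ pick a preimage $b$ of $b'$ and a preimage $c_0$ of $c'$ and set $c:=b\vee c_0$; then $b\le c$, $f(b)=b'$, $f(c)=c'$, and applying $f$ to $(b\vee a)\wedge c=b\vee(a\wedge c)$ gives $(b'\vee f(a))\wedge c'=b'\vee(f(a)\wedge c')$. Since the contraction map $\pi\colon L'[C]\to L'$ is a surjective lattice homomorphism whose only nontrivial fibres are the two--element sets $\{(x,0),(x,1)\}$ with $x\in C$, this already yields one half of the lemma: if $L'[C]$ is left modular, project a maximal left modular chain of it; at most one of its covers has the collapsing form $(x,0)\lessdot(x,1)$ (two such would force $1\le 0$), so the image is still a saturated chain from $\hat 0$ to $\hat 1$ consisting of left modular elements, whence $L'$ is left modular.

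For $(\Leftarrow)$ of the lemma, take a maximal left modular chain $\mathcal C\colon\hat0=z_0\lessdot\cdots\lessdot z_m=\hat1$ of $L'$ with $z_j=h\in H(C)$, and lift it to $L'[C]$ by tagging with $0$ below $h$ and with $1$ above $h$, namely $\widetilde{\mathcal C}:=\{(z_i,0):i\le j\}\cup\{(z_i,1):i\ge j\}$. Using $z_i\le h\in C$ for $i\le j$ and $z_i\ge h$ together with the convexity of $C$ for $i\ge j$, one checks that all these elements lie in $L'[C]$ and that consecutive ones are cover relations of $L'[C]$ (the crossover $(h,0)\lessdot(h,1)$ being a cover exactly because $h\in C$), so $\widetilde{\mathcal C}$ is a maximal chain of $L'[C]$. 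The real content is that its elements are left modular in $L'[C]$: one pushes the defining identity down through $\pi$, where it holds since $\pi$ is a homomorphism, and observes that equality can only fail upstairs if the two sides are $(x,0)$ and $(x,1)$ for some $x\in C$; the assumption $h\in H(C)$ — that $h$ lies weakly below every maximal and weakly above every minimal element of $C$ — is precisely what rules this out for the elements of $\widetilde{\mathcal C}$.

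What remains, and what I expect to be the main obstacle, is the reverse production of a heart element. Given a maximal left modular chain $\widetilde{\mathcal C}$ of $L'[C]$, one must first show that it crosses between the two layers through a collapsing cover $(h,0)\lessdot(h,1)$ with $h\in C$ — a priori it could cross through a non--collapsing cover skirting $C$, and one has to show that such a chain cannot consist entirely of left modular elements — and then show $h\in H(C)$: applying left modularity of $(h,1)$ to the cover $(M,0)\lessdot(M,1)$ for each maximal element $M$ of $C$ and evaluating both sides inside $L'\times C_2$ forces $h\le M$, and dually left modularity of $(h,0)$ at $(m,0)\lessdot(m,1)$ for each minimal $m$ forces $h\ge m$; hence $h\in H(C)$. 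Then $\pi(\widetilde{\mathcal C})$ is, as in the easy half, a maximal left modular chain of $L'$, and it passes through $\pi(h,0)=\pi(h,1)=h\in H(C)$, as required. An alternative and perhaps cleaner route to both directions is to track the coincident edge--labelling $\gamma_2=\gamma_4$ of \cref{thmlabelling} through the doubling: away from the doubled copy it is unchanged, and the heart condition is exactly what permits the label of the single new cover to be inserted consistently, keeping $\gamma_2=\gamma_4$ along a maximal chain.
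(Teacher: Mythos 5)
Your overall architecture is the same as the paper's: induct on a doubling sequence, reducing everything to a single-doubling equivalence whose content is exactly \cref{lemmeTechniqueMod} ($(b,0)$ is left modular in $L[C]$ iff $b$ is left modular and below all maximal elements of $C$; dually for $(b,1)$). Your easy half via the surjective homomorphism $\pi\colon L'[C]\to L'$ and your lifted chain for $(\Leftarrow)$ are fine. But there is a genuine gap, and it sits precisely where you flag it: in the forward direction you must show that a maximal left modular chain of $L'[C]$ cannot cross from the $0$-layer to the $1$-layer through a non-collapsing cover $(x,0)\lessdot(y,1)$ with $x<y$. You state this as something "one has to show" and never show it; it is not a routine verification, and it is the whole point of the paper's second ingredient, \cref{lemChainemaxIncomparable}: every maximal chain of $L'$ disjoint from $C$ contains an element that is neither below all maximal elements of $C$ nor above all minimal ones. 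Combined with the "only if" directions of \cref{lemmeTechniqueMod}, such an element cannot carry either tag and be left modular, so a fully left modular maximal chain of $L'[C]$ must project to a chain meeting $C$, and one then checks (using convexity) that a saturated chain entering $C\times\{0\}$ or $C\times\{1\}$ is forced through a vertical edge. Note that \cref{lemChainemaxIncomparable} genuinely uses that $L'$ is a lattice (see the counter-example in \cref{fig:sfig2}), so this step cannot be waved through.

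Two smaller points. First, in your derivation of $h\in H(C)$ you have the roles of the two copies reversed: testing left modularity of $(h,1)$ against $(M,0)\lessdot(M,1)$ only yields $h\wedge M\in C$, not $h\le M$. It is the left modularity of $(h,0)$ at that cover which forces $(M,0)\vee(h,0)$ to stay in the $0$-layer, i.e.\ $M\vee h\in I_{L'}(C)$, whence $h\le M$ by maximality of $M$; dually, left modularity of $(h,1)$ at $(m,0)\lessdot(m,1)$ forces $h\ge m$. Since both $(h,0)$ and $(h,1)$ lie on your chain this is repairable, but as written the computation you describe does not give the inequality you claim. Second, your closing suggestion of tracking $\gamma_2=\gamma_4$ from \cref{thmlabelling} through the doubling is only an idea, not an argument, and would run into the same difficulty of controlling how the maximal chain interacts with $C$.
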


\begin{figure}
    \centering
    \begin{tikzpicture}
    \begin{scope}[scale=0.9]
    \draw (0,0)--(1,1)--(0,2)--(-1,1)--(0,0);
    \draw[ultra thick, red] (-1,1)--(0,0)--(1,1);
    \draw (0,0) node[blue]{$\bullet$};
    \draw (0,2) node[blue]{$\bullet$};
    \draw (1,1) node[blue]{$\bullet$};
    \draw (-1,1) node[blue]{$\bullet$};
    \draw[thick] (0,0) circle (0.3);

    \begin{scope}[xshift=3cm, yshift= 1.5cm]
    \draw (0,0)--(1,1)--(0,2)--(-1,1)--(0,0);  
    \draw (-1,1)--(-1,-0.5)--(0,-1.5)--(1,-0.5)--(1,1);
    \draw[ultra thick, red] (1,-0.5)--(1,1);
    \draw (0,0) node[blue]{$\bullet$};
    \draw (0,2) node[blue]{$\bullet$};
    \draw (1,1) node[blue]{$\bullet$};
    \draw (-1,1) node[blue]{$\bullet$};
    \draw (0,-1.5)--(0,0);
    \draw (0,-1.5) node[blue]{$\bullet$};
    \draw[thick] (1,0.25) ellipse (0.4cm and 1.1cm);
    \end{scope}

    \begin{scope}[xshift=6.5cm, yshift= 1.5cm]
     \draw (-1,1)--(0,0)--(1,1);  
    \draw (0,0)--(0,-1.5);
    \draw (-1,1)--(-1,-0.5)--(0,-1.5)--(1,-0.5)--(1,1)--(2,2)--(2,0.5)--(1,-0.5);   
    \draw (-1,1)--(1,3)--(2,2);
    \draw[ultra thick,red] (1,1)--(1,-0.5)--(2,0.5);
    \draw (0,0) node[blue]{$\bullet$};
    \draw (0,-1.5) node[blue]{$\bullet$};
    \draw (1,1) node[blue]{$\bullet$};
    \draw (2,2) node[blue]{$\bullet$};
    \draw (1,3) node[blue]{$\bullet$};
    \draw[thick] (1,-0.5) circle (0.3);
    \end{scope}

    \begin{scope}[xshift=10.5cm, yshift= 1.5cm]
    \draw (-1,1)--(0,0)--(1,1); 
    \draw (0,0)--(0,-1.5);
    \draw (-1,1)--(-1,-0.5)--(0,-1.5)--(1,-0.5);
    \draw (1,1)--(1,-0.5)--(1.8,0.3);
    \draw (1.5,2.2)--(1.5,0.7)--(2.3,1.5);
    \draw (1,1)--(1.5,2.2);  
    \draw (1,-0.5)--(1.5,0.7);  
    \draw (1.8,0.3)--(2.3,1.5);  
    \draw (1.5,2.2)--(2.3,3);
    \draw (2.3,1.5)--(2.3,3);
    \draw (-1,1)--(1.3,4)--(2.3,3);  
    \draw (1.5,2.2) node[blue]{$\bullet$};
    \draw (0,-1.5) node[blue]{$\bullet$};
    \draw (2.3,3) node[blue]{$\bullet$};
    \draw (1.3,4) node[blue]{$\bullet$};
    \end{scope}
    \end{scope}
    \end{tikzpicture}
    \caption{We represent $3$ successive doublings. The left modular elements are the blue dots. The thick red edges form the convex subsets $C$ that we double and we circled the elements of $H(C)$.}
    \label{fig:doublingsLeftmodular}
\end{figure}
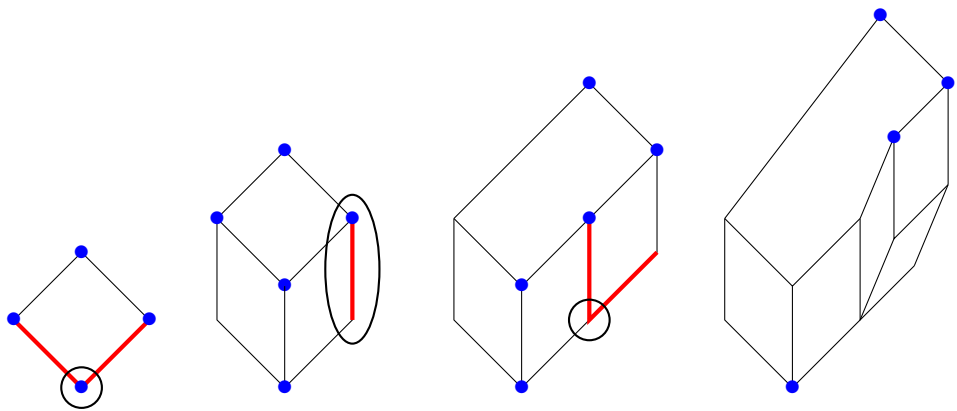

We give the two main ingredients (\cref{lemmeTechniqueMod,lemChainemaxIncomparable}) that lead to a proof of \cref{thmCongruencenormal}, that are interesting in their own right. 
See \cref{fig:doublingsLeftmodular} for an example of \cref{lemmeTechniqueMod} and \cref{fig:sfig2} for a counter-example to \cref{lemChainemaxIncomparable} when $L$ is not a lattice.

\begin{lem}
\label{lemmeTechniqueMod}
We have $(b,0)\in L[C]$ is left modular if and only if $b\in L$ is left modular and $b$ is smaller than all the maximal elements of $C$.
We have $(b,1)\in L[C]$ is left modular if and only if $b\in L$ is left modular and $b$ is bigger than all the minimal elements of $C$.
Thus $(b,0)\lessdot (b,1)$ with $b\in C$ are two left modular elements if and only if $b\in L$ is left modular and $b\in H(C)$. 
\end{lem}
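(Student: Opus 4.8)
The plan is to work directly with the order relation on the doubling $L[C]\subseteq L\times C_2$, using the left modularity criterion $(b\vee a)\wedge c = b\vee(a\wedge c)$ for all $b<c$, and to split the verification into the cases according to the second coordinate. Throughout I write elements of $L[C]$ as pairs; recall that $(x,0)\le(y,\epsilon)$ iff $x\le y$ (and $\epsilon$ arbitrary, subject to being in $L[C]$), while $(x,1)\le(y,1)$ iff $x\le y$, and there is no relation $(x,1)\le(y,0)$. The join and meet in $L[C]$ can be computed from those of $L$: the second coordinate of a join is $0$ only when both arguments have second coordinate $0$ (and then the first coordinate is the join in $L$), and dually for meets; the subtlety is that when the $L$-join lands outside $I_L(C)$ the pair may need its second coordinate adjusted to stay in $L[C]$, but since $(L\setminus I_L(C))\cup C$ is precisely the allowed first-coordinates on level $1$, this is automatic.

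For the first equivalence, I would prove both directions. Assume $(b,0)$ is left modular in $L[C]$. To see $b$ is left modular in $L$, take any $y<z$ in $L$ and lift them to $(y,0)<(z,0)$ (legitimate since if $z\in I_L(C)$ then so is $y$; if $z\notin I_L(C)$ one still has $(z,0)$ only when $z\in I_L(C)$, so instead use $(y,1)<(z,1)$ — but one checks that $(b,0)\vee(y,1)=(b\vee y,1)$ and $((b,0)\vee(y,1))\wedge(z,1)$ projects to $(b\vee y)\wedge z$, etc.); projecting the left-modularity identity for $(b,0)$ back to $L$ via the first coordinate gives the identity for $b$. To see $b$ is below every maximal element of $C$: let $m$ be a maximal element of $C$, so $(m,0)<(m,1)$ is a cover in $L[C]$. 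Apply left modularity of $(b,0)$ with this cover: $((b,0)\vee(m,0))\wedge(m,1) = (b,0)\vee((m,0)\wedge(m,1))$. The right side is $(b,0)\vee(m,0)=(b\vee m,0)$. For the left side, $(b,0)\vee(m,0)=(b\vee m,\epsilon)$ where $\epsilon=0$ iff $b\vee m\in I_L(C)$; if $b\not\le m$ then $b\vee m>m$, which forces $b\vee m\notin I_L(C)$ (as $m$ is maximal in $C$, nothing in $I_L(C)$ strictly dominates $m$ unless... here I must be slightly careful: $I_L(C)$ can contain elements above $m$ only if they are below some other element of $C$; I would argue using convexity that this cannot put $b\vee m$ back into $I_L(C)$ in a way that breaks the meet computation). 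Then $(b\vee m,\epsilon)\wedge(m,1)$ has first coordinate $(b\vee m)\wedge m = m$, so equals $(m,0)$ or $(m,1)$, contradicting equality with $(b\vee m,0)\ne(m,0)$. Hence $b\le m$. Conversely, if $b$ is left modular in $L$ and $b$ is below all maximal elements of $C$, I verify $((b,0)\vee(x,\epsilon))\wedge(y,\eta) = (b,0)\vee((x,\epsilon)\wedge(y,\eta))$ for all $(x,\epsilon)<(y,\eta)$ by checking the first coordinates using left modularity of $b$ in $L$, and then checking the second coordinates agree; the hypothesis $b\le$ all maximal elements of $C$ is exactly what is needed so that whenever a join with $b$ lands inside $C$-related territory the level bookkeeping matches on both sides.

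The second equivalence, for $(b,1)$, is the order-dual statement and follows by the same argument applied to $L[C]^{\mathrm{op}}=L^{\mathrm{op}}[C^{\mathrm{op}}]$ (doubling commutes with order-duality, swapping levels $0\leftrightarrow 1$ and maximal $\leftrightarrow$ minimal elements of $C$), so I would just remark that duality gives it. Finally, for $b\in C$: by the first two parts, $(b,0)$ is left modular iff $b$ is left modular in $L$ and $b\le$ all maximal elements of $C$, and $(b,1)$ is left modular iff $b$ is left modular in $L$ and $b\ge$ all minimal elements of $C$; conjoining these two, and recalling that $H(C)$ was defined as the set of $b\in C$ simultaneously below all maximal and above all minimal elements of $C$, we get exactly: $(b,0)$ and $(b,1)$ are both left modular iff $b$ is left modular in $L$ and $b\in H(C)$. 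The pair $(b,0)\lessdot(b,1)$ is indeed a cover in $L[C]$ whenever $b\in C$, so the phrasing is justified. The main obstacle I anticipate is the careful second-coordinate bookkeeping in the meet/join computations — specifically ruling out the degenerate possibility that joining with $b$ re-enters $I_L(C)$ in a way that changes the level, which is where convexity of $C$ and the maximality/minimality hypotheses on $b$ must be used precisely; once that is pinned down, everything else is routine case analysis.
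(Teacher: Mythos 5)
Your overall strategy (test left modularity against the covers $(m,0)\lessdot(m,1)$ for $m$ extremal in $C$, use the projection $L[C]\to L$ to recover left modularity of $b$ in $L$, and dualize for the second statement) is sound, but the central computations rest on a mis-application of the left modularity identity, and this is not cosmetic. In the definition $(b\vee a)\wedge c=b\vee(a\wedge c)$ the element $a$ being tested sits \emph{inside the meet} on the right-hand side. Testing $(b,0)$ against the pair $(m,0)<(m,1)$ therefore means checking $((m,0)\vee(b,0))\wedge(m,1)=(m,0)\vee\bigl((b,0)\wedge(m,1)\bigr)$; you instead write $(b,0)\vee\bigl((m,0)\wedge(m,1)\bigr)$, which collapses to $(b,0)\vee(m,0)$ and turns the condition into the generally false assertion $(b,0)\vee(m,0)\le(m,1)$. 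The same role-swap infects your converse direction, where the identity you propose to verify, $((b,0)\vee(x,\epsilon))\wedge(y,\eta)=(b,0)\vee((x,\epsilon)\wedge(y,\eta))$, reduces to $(b,0)\vee(x,\epsilon)\le(y,\eta)$ and is simply not left modularity. Your forward step survives the repair: correctly, the right side is $(m,0)\vee(b\wedge m,0)=(m,0)$, while if $b\not\le m$ then $b\vee m>m$ forces $b\vee m\notin I_L(C)$ (an element of $I_L(C)$ strictly above $m$ would put an element of $C$ strictly above the maximal element $m$; no convexity is needed here), so the left side is $(b\vee m,1)\wedge(m,1)=(m,1)\ne(m,0)$. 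But as written your ``right side'' $(b\vee m,0)$ is not the right side of the identity, and it even contradicts your own evaluation of the same join one sentence later.

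Two further gaps. In the converse direction the only real content is the second-coordinate check, which you assert rather than prove; the one dangerous case is $\epsilon=0$, $\eta=1$, $x\vee b\notin I_L(C)$ and $w:=(x\vee b)\wedge y\in C$, where the left side would be $(w,1)$ against $(w,0)$ on the right. This is excluded exactly by your hypothesis: $w\le m$ for some maximal $m\in C$, so $x\le w\le m$ and $b\le m$ give $x\vee b\le m\in I_L(C)$, a contradiction. Some such argument must appear. Second, the duality is not literally ``swap the levels'': the anti-isomorphism between $L[C]$ and $L^{\mathrm{op}}[C]$ flips levels only on fibres over elements comparable to $C$, so an element $(b,1)$ with $b$ incomparable to $C$ corresponds again to a level-$1$ element, and the first statement applied to $L^{\mathrm{op}}$ says nothing about it; you need a separate (easy) check that such a $(b,1)$ is never left modular, e.g.\ by testing against $(m,0)\lessdot(m,1)$ for $m$ minimal in $C$. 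The deduction of the third statement from the first two is fine.
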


\begin{lem}
\label{lemChainemaxIncomparable}
    All the maximal chains that do not intersect $C$ contain an element that is neither smaller than all the maximal elements of $C$ nor bigger than all its minimal elements. 
\end{lem}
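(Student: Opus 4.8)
The plan is to translate the two ``extremal'' conditions into single lattice inequalities and then run an intermediate-value argument along the chain, where the real content is to forbid the chain from ``jumping over'' the region carved out by $C$. Set
$a := \bigwedge\{\,M \mid M \text{ is a maximal element of } C\,\}$
and
$b := \bigvee\{\,m \mid m \text{ is a minimal element of } C\,\}$,
which exist because $L$ is a finite lattice and $C$ is nonempty. Then an element $z$ is smaller than all maximal elements of $C$ exactly when $z \le a$, and bigger than all minimal elements exactly when $z \ge b$. So on any maximal chain $\hat{0} = z_0 \lessdot \cdots \lessdot z_k = \hat{1}$ avoiding $C$ I must produce an index $i$ with $z_i \not\le a$ \emph{and} $z_i \not\ge b$ \emph{simultaneously}. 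This simultaneity is the whole subtlety: it is easy to find an element failing just one condition, but I need a single element failing both.

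First I would record the monotonicity along the chain. Since $z_0 = \hat{0} \le a$ and $z_k = \hat{1} \ge b$, the set $\{\,i \mid z_i \le a\,\}$ is a down-set $\{0,\dots,t\}$ and $\{\,i \mid z_i \ge b\,\}$ is an up-set $\{s,\dots,k\}$. An index strictly between, $t < i < s$, then yields a witness $z_i$ failing both conditions at once, so the entire statement reduces to the single inequality $s \ge t+2$. It therefore remains to rule out the two ``bad'' configurations $s \le t$ and $s = t+1$.

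The key device in both cases is a \emph{trapping} argument: fix once and for all a minimal element $m$ and a maximal element $M$ of $C$ with $m \le M$ (pick any $c \in C$ and take $m \le c \le M$); by convexity $[m,M] \subseteq C$, so exhibiting any chain element inside $[m,M]$ contradicts the chain avoiding $C$. In the case $s \le t$ this is immediate: $z_s$ satisfies $m \le b \le z_s \le a \le M$, hence $z_s \in [m,M] \subseteq C$, a contradiction.

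The hard part is the adjacency case $s = t+1$, where the two neighbours $u := z_t$ and $v := z_{t+1}$ each fail only one condition, in opposite directions ($u \le a$ but $u \not\ge b$, while $v \ge b$ but $v \not\le a$), so no single chain element visibly works and a genuine use of the lattice and the cover structure is forced. Here I would argue as follows: if $m \le u$ then $m \le u \le a \le M$ traps $u \in [m,M] \subseteq C$, so $m \not\le u$; consequently $u < u \vee m \le v$, and because $u \lessdot v$ is a cover this forces $u \vee m = v$. But $m \le M$ together with $u \le a \le M$ gives $u \vee m \le M$, that is $v \le M$, whence $m \le v \le M$ traps $v \in [m,M] \subseteq C$ — the final contradiction. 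I expect this cover-and-join step to be the crux: it is exactly where the existence of joins and the two-element interval $[u,v]$ conspire to defeat the ``jump'', and it is precisely where the lattice hypothesis is indispensable, matching the non-lattice counterexample of \cref{fig:sfig2}. Excluding both bad cases gives $s \ge t+2$, and any $z_i$ with $t < i < s$ is then an element that is neither smaller than all maximal elements of $C$ nor bigger than all its minimal elements.
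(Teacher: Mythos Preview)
Your proof is correct. The translation to $a=\bigwedge M$ and $b=\bigvee m$ is the right reduction, and the case split on the relative position of $t$ and $s$ is clean; the cover-and-join step in the adjacency case $s=t+1$ is exactly where the lattice hypothesis enters, and your argument there is sound (and indeed matches the failure mode in \cref{fig:sfig2}, where the non-existence of $u\vee m$ in the poset is precisely what lets the chain escape).

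As for the comparison: this paper is an extended abstract and does not include a proof of \cref{lemChainemaxIncomparable}; the lemma is only stated and illustrated by the counterexample figure. There is therefore no paper proof to compare your approach against.
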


\begin{lem}
\label{lemSpineCong}
The elements of the spine of an extremal congruence uniform lattice are left modular.
\end{lem}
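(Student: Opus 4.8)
The plan is to combine the characterization of extremality for congruence uniform lattices (\cref{propExtremality}) with the structural description of left modular elements under doubling (\cref{lemmeTechniqueMod}). By \cref{propExtremality}, an extremal congruence uniform lattice $L$ is built from the one-element lattice by successive doublings of intervals, where at each step the doubled interval $C$ contains an element of the spine. I will track how the spine and the left modular elements evolve under each such doubling, and show by induction on the number of doublings that every spine element is left modular.

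First I would set up the induction: suppose $L' = L[C]$ where $C=[p,q]$ is an interval of $L$ containing a spine element, and assume inductively that every spine element of $L$ is left modular (in particular, $L$ possesses a maximal left modular chain passing through a spine element of $C$). The key step is to understand the spine of $L'$ in terms of the spine of $L$. Since $\ell(L')=\ell(L)+1$ and the only longest chains of $L'$ are obtained from longest chains of $L$ by "going around" the doubled copy — i.e. a longest chain of $L'$ that passes through $C$ on both levels, using the new cover $(b,0)\lessdot(b,1)$ for some $b\in C$ — one checks that an element $(x,\varepsilon)$ lies on a longest chain of $L'$ precisely when $x$ lies on a longest chain of $L$ that meets $C$, together with the new vertices $(b,0),(b,1)$ for $b$ in the heart $H(C)$ (these are exactly the $b\in C$ lying on some longest chain of $L$ through $C$, since $C$ is an interval and contains a spine element). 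Concretely, the spine of $L'$ consists of elements $(x,0)$ and $(x,1)$ with $x$ in the spine of $L$ and $x$ comparable appropriately to $C$, plus the doubled spine elements of $H(C)$.

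Now I apply \cref{lemmeTechniqueMod}. For a spine element $x$ of $L$ that survives into the spine of $L'$ as $(x,0)$: being on a longest chain of $L$ that meets $C=[p,q]$, the element $x$ is either $\le$ some element of $C$ (hence $\le q$, so smaller than the unique maximal element of $C$) in which case $(x,0)$ is left modular by the lemma and the inductive hypothesis; or $x\ge q$, in which case the relevant new vertex on that chain is $(x,1)$ rather than $(x,0)$, and since $x\ge q\ge p$ it is bigger than the unique minimal element of $C$, so $(x,1)$ is left modular. For the new vertices $(b,0)\lessdot(b,1)$ with $b\in H(C)$: by \cref{lemmeTechniqueMod} these are left modular iff $b$ is left modular in $L$, which holds because $b\in H(C)\subseteq$ spine of $L$ (here we use that $C$ is an interval containing a spine element, so its heart lies in the spine) and the inductive hypothesis applies. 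This covers every element of the spine of $L'$.

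The main obstacle I anticipate is the precise identification of the spine of $L'$ — in particular verifying that a spine element $x$ of $L$ that is incomparable to all of $C$, or only comparable on "one side," cannot contribute a spine element of $L'$ on the wrong level, and that no genuinely new spine elements appear beyond $H(C)$. This rests on the fact that the doubled interval contains a spine element and $\ell$ increases by exactly one, which forces every longest chain of $L'$ to route through the doubled copy using a cover $(b,0)\lessdot(b,1)$; I would make this rigorous by a careful length count on chains in $L[C]\subseteq L\times C_2$, using convexity of $C$ and \cref{lemChainemaxIncomparable} to rule out longest chains of $L'$ that avoid $C$ (such a chain would have length $\le \ell(L)<\ell(L')$). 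Once the spine of $L'$ is pinned down, the verification of left modularity is a direct application of \cref{lemmeTechniqueMod} as above.
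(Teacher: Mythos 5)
Your proof is correct and matches the paper's approach: the lemma is proved by the same ``simple induction'' on the doubling sequence, combining \cref{propExtremality} (each step doubles an interval meeting the spine) with \cref{lemmeTechniqueMod}, after pinning down the spine of $L[C]$ via the length count $\ell(L[C])=\ell(L)+1$. One harmless overstatement: for an interval $C$ one has $H(C)=C$, and it is neither true nor needed that $H(C)\subseteq\mathrm{spine}(L)$ --- your own length argument already shows that the doubling points occurring on longest chains of $L[C]$ lie in $C\cap\mathrm{spine}(L)$, which is all the induction requires.
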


Using \cref{thmCongruencenormal,lemSpineCong} we obtain

\begin{coro} 
\label{corocongruenceuniformExtremal}
Join-congruence uniform left modular lattices are join-extremal. 
For congruence uniform lattices, extremality and left modularity are equivalent.
\end{coro}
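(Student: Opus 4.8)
The plan is to derive the first statement from \cref{thmCongruencenormal} and \cref{propExtremality}, and the second from \cref{thmCongruencenormal} and \cref{lemSpineCong}, the one extra ingredient being the classical observation that a maximal chain of left modular elements of a finite lattice is a longest chain. I would record this observation first: if $x_0\lessdot\dots\lessdot x_n$ is a maximal left modular chain and $c_0\lessdot\dots\lessdot c_m$ is any chain, then $j\mapsto\gamma_2(c_{j-1}\lessdot c_j)$ is an injection into $\{1,\dots,n\}$ (this uses left modularity of the $x_i$, in the same spirit as the arguments behind \cref{thmlabelling}), whence $m\le n$; it also follows from the $EL$-labelling of \cref{rmkLiu}. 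In particular every element of a maximal left modular chain lies on a longest chain, hence belongs to the spine.

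For the first statement, let $L$ be join-congruence uniform and left modular, and fix a realization of $L$ by successive doublings of lower pseudo-intervals. Since $L$ is left modular, \cref{thmCongruencenormal} applies at every step: when the lattice obtained so far is doubled along $C$, the set $H(C)$ contains an element that lies on a maximal left modular chain of the lattice at that stage. By the observation above such an element belongs to the spine, and since $H(C)\subseteq C$, the lower pseudo-interval $C$ meets the spine. As this holds at every step, \cref{propExtremality} yields that $L$ is join-extremal.

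For the second statement, let $L$ be congruence uniform. If $L$ is left modular then it is in particular join-congruence uniform, so the first statement gives that $L$ is join-extremal; since congruence uniform lattices are semidistributive \cite{freese1995free} we have $|\mathrm{JIrr}(L)|=|\mathrm{MIrr}(L)|$ by \cref{lemJSDjoinmeetSD}, and therefore $\ell(L)=|\mathrm{MIrr}(L)|$ as well, so $L$ is extremal. Conversely, if $L$ is extremal then \cref{lemSpineCong} says that every spine element of $L$ is left modular; taking any longest chain $\phi$ of $L$, each element of $\phi$ lies on a longest chain and hence is a spine element, so $\phi$ is a maximal chain made of left modular elements and $L$ is left modular.

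The part that needs the most care is the preliminary fact that maximal left modular chains are longest chains, since it is exactly what lets one feed the output of \cref{thmCongruencenormal} into \cref{propExtremality} (an element of $H(C)$ on a maximal left modular chain is a spine element) and also what makes a longest chain a maximal left modular chain in the converse direction of the second statement. Once this is settled, both reductions are essentially formal. One could instead prove the first statement by induction on the number of doublings, using \cref{lemmeTechniqueMod} to extend a maximal left modular chain by one element at each step and $\ell(L)\le|\mathrm{JIrr}(L)|$ to bound it from above, but the same fact about left modular chains is still needed to control the length.
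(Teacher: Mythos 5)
Your proof is correct and follows the route the paper intends (the paper only writes ``using \cref{thmCongruencenormal,lemSpineCong}'', but \cref{propExtremality} is clearly the other ingredient): \cref{thmCongruencenormal} plus \cref{propExtremality} for the first statement, and \cref{lemSpineCong} for the converse direction of the second. You correctly isolate the one fact that must be supplied, namely that a maximal left modular chain is a longest chain, and your sketched injectivity argument for $\gamma_2$ along a saturated chain does go through (if $\gamma_2(c_{j-1}\lessdot c_j)=\gamma_2(c_{k-1}\lessdot c_k)=i$ with $c_j\le c_{k-1}$, then $\gamma_2=\gamma_4$ from \cref{thmlabelling} gives $c_j\wedge x_i\not\le x_{i-1}$, so $x_i=x_{i-1}\vee(c_j\wedge x_i)\le x_{i-1}\vee c_{k-1}$, contradicting $c_{k-1}\vee x_{i-1}\not\ge c_k$).
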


\begin{remark}
\cref{lemSpineCong} is a special case of combining results of \cite{thomas2005analoguedistributivityungradedlattices,Thomas_2019}, but is proved in our context by a simple induction.
The first statement of \cref{corocongruenceuniformExtremal} is a special case of Theorem $3.2$ of \cite{M_hle_2023}, as these lattices are JSD. Combining this with \cref{CorExtremalSD} gives, as was observed in \cite{M_hle_2023}, a generalization of the last statement of \cref{corocongruenceuniformExtremal} to all SD lattices.     
\end{remark}

\section{The $(P,\phi)$-Tamari lattices}
\label{sectionPphiTamari}

In this extended abstract we restrict ourselves to finite posets $P$ with a minimum $\hat{0}$ and chains $\phi: \phi(0)=\hat{0} < \phi(1) < \cdots < \phi(n-1)$ (an exception is made in \cref{subsectionChain}). In the sequel, let us fix such a poset $P$ and chain $\phi$ in $P$.

\subsection{Definition and first properties of $(P,\phi)$-Tamari}
\label{sectionGenerality}

For all $k<n$, denote $\mathcal{C}_k := I_P(\phi(k)) \times \{k\}$. The poset $\mathcal{C}_k$ is called the \defn{$k^{th}$ component} of the poset $\mathcal{C}_P^{\phi}:=\bigsqcup_{k<n} \mathcal{C}_k$.
We will later consider on $\mathcal{C}_P^{\phi}$ another partial order $\leq_{prod}$, called the product order, defined by $(x,i) \leq_{prod} (y,j)$ if and only if $x\leq y$ and $i\leq j$.
 We denote $a_{i,j}:=(\phi(j-i),j)$ for all $ i < j<n$. 
 In $\mathcal{C}_k$ these elements satisfy $a_{0,k}>a_{1,k}>\cdots > a_{k,k}$ and $a_{0,k}$ and $a_{k,k}$ are respectively the maximum and minimum elements of $\mathcal{C}_k$. 

\begin{defi}
\label{defConstruction}
Let $F$ be an order filter of $\mathcal{C}_P^{\phi}$. Then $F$ is \defn{torclosed} if for all $i$ and $j$ such that $i<j<n$, we have that $(x,i) \in F$ together with $(\phi(i+1),j) \in F$ implies that $(x,j)\in F$.

We call \defn{$(P,\phi)$-Tamari} the poset ordered by inclusion on the torclosed subsets of $\mathcal{C}_P^{\phi}$. Since the intersection of torclosed subsets is torclosed, it is a lattice with meet given by intersection, that we denote \defn{$\Tam{(P,\phi)}$}.
\end{defi}

\begin{remark}
See \cref{fig:firstExample} for an example and \cref{propTamari,propcompareTamari} for a justification of the name.
The name \emph{torclosed} for $F$ comes from the analogy with the torsion classes ; in special cases (see \cref{sectionAuslander}) $F$ being an order filter means closed by quotients, and the other condition means closed by extensions.
\end{remark}

\begin{figure}
\begin{subfigure}{.5\textwidth}
\centering
\begin{tikzpicture}
\begin{scope}[scale=0.9]
 \draw (0,0) -- (1,1) -- (0,2) -- (-1,1) -- (0,0);
\draw (0,0) node {$\bullet$};
\draw (-1,1) node {$\bullet$};
\draw (0,0) node [below ]{$\phi(0)$};
\draw (-1,1) node [left]{$\phi(1)$};
\draw (0,2) node {$\bullet$};
\draw (0,2) node [above ]{$\phi(2)$};   
\end{scope}

\begin{scope}[xshift= 1.8cm, scale= 1.2]
\node[draw] (7) at (0,0) {$7$};
\node[draw] (6) at (1,0) {$6$};
\node[draw] (5) at (1,1) {$5$};
\node[draw] (4) at (3,0) {$4$};
\node[draw] (3) at (2,1) {$3$};
\node (2) at (4,1) {$2$};
\node[draw] (1) at (3,2) {$1$};
\draw (6) -- (5);
\draw (4) -- (3) -- (1) -- (2) -- (4);
\draw[red] (7) --node[above] {$5$} (6);
\draw[red] (6) --node[above] {$1$} (4);
\draw[red] (5) -- node[above] {$1$} (3);
\draw[red] (7) to[bend right] node[below] {$3$} (4);
\end{scope}
\end{tikzpicture}
    \caption{On the left is a poset $P$ with the choice of a chain $\phi$. On the right are from left to right the components $\mathcal{C}_0$, $\mathcal{C}_1$ and $\mathcal{C}_2$ with the numbering from \cref{sectionCongruenceTopo} and with squares around the elements $a_{i,j}$.
    We added a horizontal red edge between $(x,i)$ and $(x,j)$ with label $(\phi(i+1),j)$.}
    \label{fig:posetfirstExample}
\end{subfigure}%
\begin{subfigure}{.5\textwidth}
\centering
    \begin{tikzpicture}
\begin{scope}[xshift=0.1cm,xscale=1.2]
\node[scale=0.7] (v17) at (0,7.5) {$\mathcal{C}_P^{\phi}$};
\node[scale=0.7] (v16) at (-2,4.4) {$5,6,7$};
\node[scale=0.7] (v15) at (2.5,5) {$1,2,3,4,7$};
\node[scale=0.7] (v14) at (2.5,3) {$1,2,7$};
\node[scale=0.7] (v13) at (2.5,2) {$1,7$};
\node[scale=0.7] (v12) at (2,1) {$7$};
\node[scale=0.7] (v11) at (0,6) {$1,2,3,4,5,6$};
\node[scale=0.7] (v10) at (-2,3) {$5,6$};
\node[scale=0.7] (v9) at (0.6,5) {$1,2,3,4,5$};
\node[scale=0.7] (v8) at (-0.3,4) {$1,2,3,5$};
\node[scale=0.7] (v7) at (-1,3) {$1,3,5$};
\node[scale=0.7] (v6) at (-2,1) {$5$};
\node[scale=0.7] (v5) at (1.3,4) {$1,2,3,4$};
\node[scale=0.7] (v4) at (0.6,3) {$1,2,3$};
\node[scale=0.7] (v3) at (-0.5,2) {$1,3$};
\node[scale=0.7] (v2) at (1.5,2.2) {$1,2$};
\node[scale=0.7] (v1) at (0,1) {$1$};
\node[scale=0.7] (v0) at (0,-0.2) {$\emptyset$};

\draw[->,>=latex] (v0)--(v1);
\draw[->,>=latex] (v0)--(v6);
\draw[->,>=latex] (v0)--(v12);
\draw[->,>=latex] (v1)--(v2);
\draw[->,>=latex] (v1)--(v3);
\draw[->,>=latex] (v1)--(v13);
\draw[->,>=latex] (v2)--(v4);
\draw[->,>=latex] (v2)--(v14);
\draw[->,>=latex] (v3)--(v4);
\draw[->,>=latex] (v3)--(v7);
\draw[->,>=latex] (v4)--(v5);
\draw[->,>=latex] (v4)--(v8);
\draw[->,>=latex] (v5)--(v9);
\draw[->,>=latex] (v5)--(v15);
\draw[->,>=latex] (v6)--(v7);
\draw[->,>=latex] (v6)--(v10);
\draw[->,>=latex] (v7)--(v8);
\draw[->,>=latex] (v8)--(v9);
\draw[->,>=latex] (v9)--(v11);
\draw[->,>=latex] (v10)--(v16);
\draw[->,>=latex] (v10)--(v11);
\draw[->,>=latex] (v11)--(v17);
\draw[->,>=latex] (v12)--(v16);
\draw[->,>=latex] (v12)--(v13);
\draw[->,>=latex] (v13)--(v14);
\draw[->,>=latex] (v14)--(v15);
\draw[->,>=latex] (v15)--(v17);
\draw[->,>=latex] (v16)--(v17);
\end{scope}
\end{tikzpicture}
\caption{$\Tam{(P,\phi)}$ from figure \ref{fig:posetfirstExample}.}
\label{fig:latticefirstExample}
\end{subfigure}
\caption{}
\label{fig:firstExample}
\end{figure}

\begin{prop}
     $\Tam{(P,\phi)}$ has a minimum element $\emptyset$ and a maximum element $\mathcal{C}_P^{\phi}$. Its atoms and coatoms are respectively $\{(\phi(k),k)\}$ and $\mathcal{C}_P^{\phi} \setminus \mathcal{C}_k$, for any $k<n$.
\end{prop}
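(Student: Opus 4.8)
The minimum and maximum are immediate: $\emptyset$ is torclosed (both defining conditions are vacuous), and $\mathcal{C}_P^{\phi}$ is torclosed, since it is an order filter of itself and $(x,i)\in\mathcal{C}_i$ with $i<j<n$ forces $x\le\phi(i)\le\phi(j)$, hence $(x,j)\in\mathcal{C}_j$; moreover every torclosed subset contains $\emptyset$ and is contained in $\mathcal{C}_P^{\phi}$. For the atoms, the plan is to identify them with the minimal nonempty torclosed subsets and to check that the latter are exactly the singletons $\{(\phi(k),k)\}=\{a_{0,k}\}$. First, $\{(\phi(k),k)\}$ is torclosed: as $\phi(k)=\max I_P(\phi(k))$, this is the principal filter of the top element of $\mathcal{C}_k$ (and empty on the other components), hence an order filter, and the torclosed condition is vacuous because its unique element cannot simultaneously play the role of $(x,i)$ and of $(\phi(i+1),j)$ with $i<j$. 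Conversely, if $F$ is any nonempty torclosed subset with $(x,i)\in F$, then $F\cap\mathcal{C}_i$ is a nonempty order filter of $\mathcal{C}_i$ containing $(x,i)\le(\phi(i),i)$, so $(\phi(i),i)\in F$; hence every nonempty torclosed subset contains one of the singletons $\{(\phi(k),k)\}$, and since these are themselves torclosed they are exactly the minimal ones, i.e.\ the atoms.

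For the coatoms, I would first check that each $\mathcal{C}_P^{\phi}\setminus\mathcal{C}_k$ is torclosed: being a union of components it is an order filter, and if $(x,i),(\phi(i+1),j)$ lie in it with $i<j<n$, then $j\ne k$ while $x\le\phi(i)\le\phi(j)$, so $(x,j)\in\mathcal{C}_j\subseteq\mathcal{C}_P^{\phi}\setminus\mathcal{C}_k$. The core of the argument is the claim that a torclosed $F$ meeting every component (i.e.\ $F\cap\mathcal{C}_k\ne\emptyset$ for all $k<n$) must equal $\mathcal{C}_P^{\phi}$, which I would prove by showing $\mathcal{C}_k\subseteq F$ by induction on $k$. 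The case $k=0$ is clear, since $\mathcal{C}_0=\{(\hat 0,0)\}$. For the step, assume $\mathcal{C}_{k-1}\subseteq F$: then the nonempty order filter $F\cap\mathcal{C}_k$ contains the maximum $a_{0,k}=(\phi(k),k)$ of $\mathcal{C}_k$, and $(\hat 0,k-1)\in\mathcal{C}_{k-1}\subseteq F$, so the torclosed condition applied with $i=k-1$ and $j=k$ yields $(\hat 0,k)\in F$; since $(\hat 0,k)=a_{k,k}$ is the minimum of $\mathcal{C}_k$, the filter $F\cap\mathcal{C}_k$ must be all of $\mathcal{C}_k$.

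Granting this claim, every proper torclosed subset $F$ fails to meet some component, i.e.\ $F\subseteq\mathcal{C}_P^{\phi}\setminus\mathcal{C}_k$ for some $k$; since each $\mathcal{C}_P^{\phi}\setminus\mathcal{C}_k$ is itself proper and torclosed and they are pairwise incomparable, they are precisely the maximal proper torclosed subsets, that is, the coatoms. The only real obstacle is the inductive claim above: being an order filter does not by itself force a torclosed set meeting $\mathcal{C}_k$ to contain all of $\mathcal{C}_k$, because $\mathcal{C}_k$ may contain elements not below $\phi(k-1)$; one genuinely needs the torclosed relation to descend to the bottom element $a_{k,k}=(\hat 0,k)$ of $\mathcal{C}_k$, after which the order-filter property finishes the proof.
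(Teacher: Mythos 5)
Your proof is correct and is essentially the direct verification the paper intends (the proposition is stated without proof in the extended abstract). In particular, you correctly identify and handle the one non-trivial point: a torclosed set meeting every component must contain each minimum $a_{k,k}=(\hat 0,k)$, which is forced by the torclosed condition applied with $i=k-1$, $j=k$ to $(\hat 0,k-1)$ and the maximum $a_{0,k}=(\phi(k),k)$, after which the order-filter property gives all of $\mathcal{C}_k$.
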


\begin{lem}
\label{lemCovers}
    Let $b\lessdot c$ be a cover of $\Tam{(P,\phi)}$. Then there exists $k<n$ such that $c\setminus b \subseteq \mathcal{C}_k$ and it has a maximum element.
\end{lem}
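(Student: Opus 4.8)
The plan is to analyze the difference set $D := c \setminus b$ of a cover relation $b \lessdot c$ in $\Tam{(P,\phi)}$. Since $b \subseteq c$ are both torclosed and $b$ is covered by $c$, the set $D$ is nonempty, and I want to show that (i) $D$ lies entirely inside a single component $\mathcal{C}_k$, and (ii) $D$ has a maximum element in the induced order on $\mathcal{C}_k$ (recall each $\mathcal{C}_k = I_P(\phi(k)) \times \{k\}$ carries the order inherited from $P$). The strategy for (i) is to show that for any element $(x,k) \in D$, the \emph{principal} torclosed set, or rather the smallest torclosed set containing $b \cup \{(x,k)\}$, already equals $c$; and that if $D$ met two different components, we could produce an intermediate torclosed set strictly between $b$ and $c$, contradicting coverness. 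For (ii), among elements of $D$ lying in the common component $\mathcal{C}_k$, I need to locate one that dominates all the others; this will again come from a minimality-of-$c$ argument applied to the smallest torclosed set containing $b$ together with a single well-chosen element of $D$.

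First I would set up the key closure operation: for a subset $S \subseteq \mathcal{C}_P^{\phi}$, let $\overline{S}$ denote the smallest torclosed set containing $S$ (it exists since torclosed sets are closed under intersection, by \cref{defConstruction}). Since $\mathcal{C}_P^{\phi}$ is finite, $\overline{S}$ is obtained from $S$ by first taking the order filter it generates and then repeatedly applying the torclosed-completion rule ``$(x,i) \in F$ and $(\phi(i+1),j) \in F$ with $i<j<n$ implies $(x,j) \in F$'' until stable. Now pick any minimal element $(x,k)$ of $D$ with respect to $\leq$ (minimal in $\mathcal{C}_P^{\phi}$, hence in particular $k$ is fixed by this choice). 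Consider $c' := \overline{b \cup \{(x,k)\}}$. This is torclosed, contains $b$, is contained in $c$ (because $c$ is torclosed and contains $b \cup \{(x,k)\}$), and strictly contains $b$ (it contains $(x,k) \notin b$). By the cover relation $b \lessdot c$, we must have $c' = c$. Hence every element of $D = c \setminus b = c' \setminus b$ is produced from $b \cup \{(x,k)\}$ by the two closure mechanisms.

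The heart of the argument is then to track \emph{which} new elements these two mechanisms can create, starting from $b \cup \{(x,k)\}$. Taking the order filter of $b \cup \{(x,k)\}$ over $b$ adds only elements $(y,k)$ with $y \geq x$ in $P$ (since $b$ is already a filter, the only new upward-closure comes from $(x,k)$, and it stays in component $k$). Applying the extension rule with the new element $(x,k)$ in the role of $(x,i)$ requires some $(\phi(k+1),j) \in b$ with $k<j$, and then adds $(x,j)$ — but here I will use the chosen minimality of $(x,k)$ in $D$: if $(\phi(k+1),j) \in b$, one checks that the extension rule already forces, via elements of $b$ below or related to $(x,k)$, that $(x,j)$ would have to be handled consistently; more carefully, one argues that any element added to component $j \neq k$ during the closure could already be added to $b$ alone (using that $b$ is torclosed), so it cannot be in $D$ — giving (i). Once $D \subseteq \mathcal{C}_k$, every element of $D$ is $\geq x$ in $P$, and in fact the closure within component $k$ produces exactly the elements of $I_P(\text{something}) \times \{k\}$ above $x$ intersected with $I_P(\phi(k))$; since $P$ has a structure in which $\mathcal{C}_k \cong I_P(\phi(k))$ and the relevant generated filter is principal-over-$b$, the set $D$ itself has a maximum, namely the largest element of $\mathcal{C}_k$ that got added — concretely $a_{0,k} = (\phi(k),k)$ if $D$ reaches the top of $\mathcal{C}_k$, or otherwise the unique maximal newly-added element forced by the interaction of $x$ with $b$ inside $\mathcal{C}_k$.

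The step I expect to be the main obstacle is proving (i) cleanly — that no genuinely new element outside component $k$ appears in $D$. The extension rule can cascade across components, so I must rule out that $\overline{b \cup \{(x,k)\}}$ picks up elements in components $j > k$ that were not already in $b$. The cleanest route is probably a contrapositive/intermediate-element argument: suppose $D$ contains elements in two components; then show the union of $b$ with the ``lower'' of these two closure-pieces is itself torclosed (this requires checking the extension rule is not violated, which is where the combinatorics of $\phi$ and the $a_{i,j}$ enters), producing a torclosed set strictly between $b$ and $c$, contradiction. Verifying that this intermediate set is genuinely torclosed is the delicate bookkeeping, and I would organize it by noting that the extension rule ``fires'' only finitely often and tracking a putative minimal counterexample firing. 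The maximum-element claim (ii) should then follow more routinely from the observation that closure inside a single component $\mathcal{C}_k$, starting from $b$ plus one element, only enlarges an order filter, and the difference of two order filters in $\mathcal{C}_k \cong I_P(\phi(k))$ that differ by a single-generator step has a maximum.
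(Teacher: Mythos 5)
Your overall strategy (intermediate torclosed sets squeezed between $b$ and $c$, plus the closure operator $\overline{\,\cdot\,}$ and the observation that $\overline{b\cup\{(x,k)\}}=c$ for any $(x,k)\in c\setminus b$) is the right one, but both of the steps you defer are genuinely gapped, and the hints you give for bridging them do not work. For part (i), minimality of the \emph{element} $(x,k)$ in $D:=c\setminus b$ is the wrong invariant: it cannot control the extension rule, because $(x,k)$ and a candidate new element $(x,j)$ with $j>k$ lie in different components of $\mathcal{C}_P^{\phi}$ and are therefore incomparable, so "minimality of $(x,k)$ in $D$" says nothing about whether $(x,j)\in b$. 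What actually works is to take $k$ to be the \emph{minimal component index} with $D\cap\mathcal{C}_k\neq\emptyset$ and to check that $F:=b\cup\bigl(D\cap\bigcup_{j>k}\mathcal{C}_j\bigr)=c\setminus(D\cap\mathcal{C}_k)$ is torclosed: any instance of the rule producing an element of $\mathcal{C}_k$ has both premises $(y,i)$ and $(\phi(i+1),k)$ with $i<k$, and on components $<k$ (and on $F\cap\mathcal{C}_k$) the sets $F$ and $b$ agree, so the conclusion already lies in $b$. Note this keeps the pieces of $D$ in the \emph{higher} components and deletes the lowest one; your suggestion to keep "the lower of the two closure-pieces" is backwards, and the set $b\cup(D\cap\mathcal{C}_k)$ is in general not torclosed.

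For part (ii) the situation is worse: it does not "follow routinely," and the heuristic that a single-generator difference of filters in $\mathcal{C}_k$ has a maximum is false — the extension rule dumps entire sets $\{(z,k)\mid (z,i)\in b\}$ into $D$, and these (and their upward closures) can have several maximal elements a priori. The missing idea is to single out the elements $(\phi(i+1),k)=a_{k-i-1,k}$, which form a chain in $\mathcal{C}_k$ and are the only possible "triggers" of new firings. If none of them lies in $D$, then $\overline{b\cup\{(x,k)\}}=b\cup{\uparrow}(x,k)$ for every $(x,k)\in D$, forcing $|D|=1$. Otherwise let $t=(\phi(\beta+1),k)$ be the largest such trigger in $D$; then $c\setminus\{(y,k)\in D\mid y\leq\phi(\beta+1)\}$ is torclosed (the only problematic rule instance would need a trigger in $D$ strictly above $t$), so by the cover relation $D\subseteq{\downarrow}t$ and $t=\max D$. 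Without identifying the role of the chain elements $a_{i,k}$ and performing this second intermediate-set construction, the proof of the maximum-element claim is missing.
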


\begin{prop}
\label{joinirrprop}
    The join-irreducibles of $\Tam{(P,\phi)}$ are the non-empty principal order filters of $\mathcal{C}_P^{\phi}$. Thus $|\,\mathrm{JIrr}(\Tam{(P,\phi)})\,| = |\mathcal{C}_P^{\phi}|$.
    The meet-irreducibles are in one to one correspondence with the pairs of elements $((x,k),a)$ where $(x,k)\in \mathcal{C}_P^{\phi}$ and $a=(\hat{0},k)$ or $a=a_{j,k}$ where $x$ is incomparable to $a_{j,k}$. For such a pair $((x,k),a)$, denote $l:=\mathrm{min}\{i\mid x\in I_P(\phi(i))\}$; then the meet-irreducible is
    $\displaystyle \Big(\bigcup_{i<l-1} \mathcal{C}_i \Big) \,\bigcup \,\Big( \bigcup_{l\leq i\leq k}\{(y,i) \in \mathcal{C}_P^{\phi} \mid y\not\leq x,\, (y,k)\not \leq a\} \Big) \,\bigcup \,\Big(\bigcup_{k<i<n} \mathcal{C}_i\Big)$.
\end{prop}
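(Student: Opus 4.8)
The plan is to prove the two halves of the statement separately. The join-irreducible half is short; the meet-irreducible half is the real work.

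\emph{Join-irreducibles.} Write $\uparrow(z,k):=\{(y,k)\in\mathcal{C}_P^{\phi}\mid y\geq z\}$ for the principal order filter of $(z,k)$. First I would note that $\uparrow(z,k)$ is torclosed: all of its elements lie in the single component $\mathcal{C}_k$, so the hypothesis of the torclosed condition, which involves two distinct components $i<j$, is never satisfied. Next $\uparrow(z,k)$ is join-irreducible, with unique lower cover $\uparrow(z,k)\setminus\{(z,k)\}$: any torclosed $G\subsetneq\uparrow(z,k)$ must avoid $(z,k)$, since an order filter containing $(z,k)$ contains all of $\uparrow(z,k)$; hence $G\subseteq\uparrow(z,k)\setminus\{(z,k)\}$, and this last set is again an order filter of $\mathcal{C}_k$ and hence, by the same vacuity, torclosed. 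For the converse, every element of a finite lattice is the join of the join-irreducibles below it; for a torclosed set $F$ the join-irreducibles below it are exactly the $\uparrow(z,k)$ with $(z,k)\in F$ (since $\uparrow(z,k)\subseteq F$ iff $(z,k)\in F$), so $F$ is a join of such, whence every join-irreducible is one of the $\uparrow(z,k)$. Since $(z,k)$ is the minimum of $\uparrow(z,k)$, the map $(z,k)\mapsto\uparrow(z,k)$ is injective, giving $|\mathrm{JIrr}(\Tam{(P,\phi)})|=|\mathcal{C}_P^{\phi}|$.

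\emph{Meet-irreducibles, forward direction.} Let $m$ be meet-irreducible with unique cover $m\lessdot m^{\ast}$. By \cref{lemCovers}, $m^{\ast}\setminus m$ lies in a single component $\mathcal{C}_k$ and has a maximum $(x,k)$; since $m$ and $m^{\ast}$ are order filters, $m^{\ast}\setminus m$ is an order ideal of the subposet $m^{\ast}$, so $m^{\ast}\setminus m=\{(y,k)\in m^{\ast}\mid y\leq x\}$ and $m\cap\mathcal{C}_k=\{(y,k)\in m^{\ast}\cap\mathcal{C}_k\mid y\not\leq x\}$. This recovers the $(x,k)$ of the pair. The core step is then to read off $m$ component by component, using that $m^{\ast}$ has no cover other than $m$, so that $m$ is maximal among torclosed sets avoiding $(x,k)$ compatibly with its own lower part. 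I would isolate the propagation mechanism: if $(y,i)\in m$ and $(\phi(i+1),j)\in m$ with $i<j$ then $(y,j)\in m$, and iterating this shows that once a component $\mathcal{C}_j$ with $j\leq k$ meets $m$ in a set containing $(\hat{0},j)$, it lies entirely in $m$ — forcing $(x,k)\in m$ when $j=k$. Hence the elements present in low components restrict those allowed in $\mathcal{C}_k$; combining this with maximality shows that $m$ contains $\mathcal{C}_i$ for $i>k$, contains $\mathcal{C}_i$ for $i$ below the threshold $l-1$ with $l:=\min\{i\mid x\in I_P(\phi(i))\}$, and meets each $\mathcal{C}_i$ with $l\leq i\leq k$ in exactly $\{(y,i)\mid y\not\leq x,\ (y,k)\not\leq a\}$ for a single extra threshold $a$ which is $(\hat{0},k)$ or an $a_{j,k}$ incomparable to $x$ — precisely the displayed formula.

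\emph{Meet-irreducibles, backward direction and bijection.} Conversely, for a valid pair $((x,k),a)$ let $M$ be the displayed set. I would check in turn: $M$ is an order filter (componentwise, $\{(y,i)\mid y\not\leq x,\ (y,k)\not\leq a\}$ is up-closed because $y\leq y'$ and $y\not\leq x$ force $y'\not\leq x$, and likewise for $a$); $M$ is torclosed (a case analysis over pairs $i<j$ of components in which the index bounds ``$i<l-1$'', ``$l\leq i\leq k$'', ``$i>k$'' and the relation between $a$ and $x$ are exactly what makes membership propagate); $(x,k)\notin M$; and $M$ has a unique cover (adjoining $\uparrow(x,k)$ to $M$ and re-closing changes $M$ only inside $\mathcal{C}_k$, and any torclosed proper superset of $M$ contains that closure), so $M$ is meet-irreducible with associated element $(x,k)$, from which $a$ is recovered as the threshold cutting out $M\cap\mathcal{C}_k$. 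This gives injectivity of $((x,k),a)\mapsto M$, surjectivity is the forward direction, and matching the two descriptions concludes. I expect the main obstacle to be the forward direction's component-by-component analysis — proving that the interaction of the torclosed condition with maximality leaves exactly the claimed set in each component, and that the single parameter $a$ with the stated restriction records all of it — together with verifying that $M$ in the backward direction genuinely has only one cover.
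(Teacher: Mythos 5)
Your treatment of the join-irreducibles is correct and complete: principal order filters are torclosed because the torclosed condition only ever constrains pairs of distinct components, they have the unique lower cover you name, and the standard ``every element is the join of the join-irreducibles below it'' argument gives the converse. No complaints there.

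The meet-irreducible half, however, is a plan rather than a proof, and the steps you defer are exactly the ones that do not go through as stated. You assert that maximality ``shows that $m$ contains $\mathcal{C}_i$ for $i$ below the threshold $l-1$ and meets each $\mathcal{C}_i$ with $l\leq i\leq k$ in exactly'' the displayed set, and in the backward direction that $M$ ``is torclosed'' and ``has a unique cover''; none of these is argued, and you yourself label them the main obstacle. If you actually carry out the component-by-component analysis on the paper's own running example (\cref{fig:firstExample}), you will find that the intersections $m\cap\mathcal{C}_i$ for $i<k$ are not governed by the single cutoff $l-1$: with the numbering of \cref{fig:posetfirstExample}, the meet-irreducible $\{1,3,5\}$ has $(x,k)=2=(b,2)$ (where $b$ is the element of $P$ incomparable to $\phi(1)$), $a=(\hat{0},2)$ and $l=2$, yet the displayed set is $\mathcal{C}_0\cup\{1,3\}=\{1,3,7\}$, which is not even torclosed (it contains $(\hat{0},0)$ and $(\phi(1),2)$ but not $(\hat{0},2)$); similarly the meet-irreducible $\{5,6,7\}$ has $(x,k)=(\phi(2),2)$, $l=2$, and the displayed set is $\{7\}$, which has two upper covers. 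In the same example $\{1,7\}$ is recovered correctly, so sometimes a low component $\mathcal{C}_i$ ($i<k$) is entirely contained in $m$ and sometimes it equals the ``middle'' set, in a way that depends on $a$ and $x$ and not only on $l$. So the decisive content of this proposition is precisely the determination of $m$ below component $k$, it requires a genuine argument (and, as far as I can tell, a correction or reinterpretation of the displayed formula), and your proposal does not contain it; the bijective parametrization by pairs $((x,k),a)$ does check out numerically on the example, but the explicit description you claim to verify in the backward direction is the part that fails.
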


\begin{prop}
\label{propJSD}
$\Tam{(P,\phi)}$ is join-semidistributive. Moreover $\Tam{(P,\phi)}$ is semidistributive if and only if all the elements of $I_P(\phi(n-1))$ are comparable to all the elements of the chain $\phi$.
\end{prop}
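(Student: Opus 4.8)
My plan is to prove the two assertions of \cref{propJSD} separately, using the combinatorial description of covers (\cref{lemCovers}) and of join- and meet-irreducibles (\cref{joinirrprop}) together with the characterization of join-semidistributivity from \cref{lemEquiJSD}.

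\emph{Join-semidistributivity.} By \cref{lemEquiJSD}, it suffices to show that for every cover $b\lessdot c$ in $\Tam{(P,\phi)}$, the set $I_L(c)\setminus I_L(b)$ has a minimum element. First I would invoke \cref{lemCovers} to write $c\setminus b\subseteq\mathcal{C}_k$ for some $k<n$, with a maximum element $(x,k)$. I claim the minimum of $I_L(c)\setminus I_L(b)$ is the principal order filter $F_{(x,k)}$ generated by $(x,k)$ in $\mathcal{C}_P^\phi$, which is a join-irreducible by \cref{joinirrprop}. One direction is to check $F_{(x,k)}$ is torclosed (it is an order filter, and torclosedness of a principal filter is a short verification: if $(y,i)\in F_{(x,k)}$ and $(\phi(i+1),j)\in F_{(x,k)}$ then $(x,k)\leq(y,i)$ and $(x,k)\leq(\phi(i+1),j)$ force relations on indices and poset elements that give $(x,k)\leq(y,j)$), that $F_{(x,k)}\subseteq c$, and that $F_{(x,k)}\not\subseteq b$ (since $(x,k)\in c\setminus b$). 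For the other direction, let $d$ be any torclosed set with $d\subseteq c$, $d\not\subseteq b$; then $d$ contains some element of $c\setminus b\subseteq\mathcal{C}_k$, hence contains $(x,k)$ because... here is the crux: I must show that any torclosed subset of $c$ that meets $\mathcal{C}_k\setminus b$ must contain the \emph{maximum} element $(x,k)$ of $c\setminus b$. This should follow from the structure of $c=b\sqcup(c\setminus b)$ being torclosed and $c\setminus b$ being an ``interval-like'' piece with top $(x,k)$; combined with $d$ being an order filter, $(x,k)\in d$ forces $F_{(x,k)}\subseteq d$, giving minimality.

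\emph{The semidistributivity criterion.} By \cref{lemJSDjoinmeetSD} and the already-established JSD property, $\Tam{(P,\phi)}$ is SD if and only if $|\mathrm{JIrr}|=|\mathrm{MIrr}|$. From \cref{joinirrprop}, $|\mathrm{JIrr}|=|\mathcal{C}_P^\phi|=\sum_{k<n}|I_P(\phi(k))|$, while $|\mathrm{MIrr}|$ is the number of pairs $((x,k),a)$ with $a=(\hat0,k)$ or $a=a_{j,k}$ with $x$ incomparable to $a_{j,k}=\phi(k-j)$. For fixed $(x,k)$, the contribution is $1$ (for $a=(\hat0,k)$) plus the number of indices $j\in\{0,\dots,k\}$ with $\phi(k-j)$ incomparable to $x$; equivalently $1$ plus the number of chain elements $\phi(0),\dots,\phi(k)$ incomparable to $x$. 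Thus $|\mathrm{MIrr}|=|\mathcal{C}_P^\phi|+\sum_{(x,k)\in\mathcal{C}_P^\phi}\#\{\,i\leq k : \phi(i)\text{ incomparable to }x\,\}$, so $|\mathrm{JIrr}|=|\mathrm{MIrr}|$ if and only if that double sum vanishes, i.e.\ for every $k<n$ and every $x\in I_P(\phi(k))$ and every $i\leq k$, $x$ is comparable to $\phi(i)$. Since $I_P(\phi(k))\subseteq I_P(\phi(n-1))$ and every chain element $\phi(i)$ with $i<n$ appears (taking $k=n-1$), this condition is exactly: every element of $I_P(\phi(n-1))$ is comparable to every element of $\phi$. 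I would write this equivalence out carefully, being attentive to the boundary case $k=i$ (where $\phi(i)=\phi(k)\geq x$ automatically, contributing $0$).

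\emph{Main obstacle.} The delicate point is the minimality argument in the JSD proof: precisely showing that the maximum element $(x,k)$ of $c\setminus b$ lies in \emph{every} torclosed set $d$ with $b\subsetneq (b\cup d)\subseteq c$ — equivalently, that one cannot ``peel off'' a proper nonempty torclosed-compatible piece of $c\setminus b$ below its top. This requires understanding why $c\setminus b$ cannot be further subdivided, which presumably comes from $b\lessdot c$ being a \emph{cover} (any torclosed set strictly between $b$ and $c$ would have to be $b$ or $c$); I expect the cleanest route is: given $d$ torclosed with $d\not\subseteq b$ and $d\subseteq c$, the set $b\cup d$ is torclosed (union of order filters is an order filter; torclosedness of the union needs the hypothesis on $d$ and $b$ and the fact $d\subseteq c$), lies strictly above $b$, hence equals $c$ by the cover relation, so $d\supseteq c\setminus b\ni(x,k)$. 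If $b\cup d$ is not automatically torclosed I will instead take its torclosure, still contained in $c$. Making this airtight is where the real work lies; the counting half of the statement is essentially bookkeeping once \cref{joinirrprop} is in hand.
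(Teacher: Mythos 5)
Your proposal follows exactly the route the paper takes (\cref{lemCovers} plus \cref{lemEquiJSD} for join-semidistributivity, and \cref{lemJSDjoinmeetSD} plus \cref{joinirrprop} for the semidistributivity criterion), and the counting half is correct as written. The one thing to fix is that the step you single out as the ``main obstacle'' is in fact immediate, and your proposed workaround is both unnecessary and, in its fallback form, broken: if $d$ is torclosed with $d\subseteq c$ and $d\not\subseteq b$, then $d$ contains some element of $c\setminus b$; every element of $c\setminus b$ is $\leq (x,k)$ because $(x,k)$ is the maximum of $c\setminus b$ guaranteed by \cref{lemCovers}; and $d$, being an order filter of $\mathcal{C}_P^{\phi}$, is upward closed, so $(x,k)\in d$ and hence $F_{(x,k)}\subseteq d$. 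This is precisely why \cref{lemCovers} asserts the existence of a maximum. By contrast, the detour through $b\cup d$ does not close the gap: $b\cup d$ need not be torclosed, and replacing it by its torclosure only yields that the torclosure equals $c$, which does not put $c\setminus b$ inside $d$ itself. Drop that paragraph and insert the one-line order-filter argument, and the proof is complete.
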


\begin{proof}
    The first statement follows from \cref{lemCovers,lemEquiJSD}. The second statement from \cref{lemJSDjoinmeetSD,joinirrprop}.
\end{proof}

See \cref{fig:firstExample} for the smallest counter-example to semdistributivity.

\begin{prop}
\label{propspinechainsTam}
The spine and chains of longest length of $\Tam{(P,\phi)}$ correspond respectively to the order filters and linear extensions of the poset $(\mathcal{C}_P^{\phi},\leq_{prod})$.
Thus $\ell(\Tam{(P,\phi)})=|\mathcal{C}_P^{\phi}|$ and with \cref{joinirrprop} it is a join-extremal lattice. 
\end{prop}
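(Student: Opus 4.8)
The plan is to exhibit an explicit bijection between torclosed subsets that lie on a longest chain and order filters of $(\mathcal{C}_P^{\phi},\leq_{prod})$, and correspondingly between longest chains of $\Tam{(P,\phi)}$ and linear extensions of $(\mathcal{C}_P^{\phi},\leq_{prod})$. First I would analyze cover relations: by \cref{lemCovers}, moving up a cover $b\lessdot c$ adds a set $c\setminus b\subseteq\mathcal{C}_k$ with a maximum element $(z,k)$. The key observation is that a maximal chain adds exactly $|\mathcal{C}_P^{\phi}|$ elements of $\mathcal{C}_P^{\phi}$ in total (since $\emptyset$ and $\mathcal{C}_P^{\phi}$ differ by that many elements), so a maximal chain has length at most $|\mathcal{C}_P^{\phi}|$, with equality iff every cover along it adds exactly one element of $\mathcal{C}_P^{\phi}$. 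Together with $\ell(\Tam{(P,\phi)})\le|\mathrm{JIrr}(\Tam{(P,\phi)})|=|\mathcal{C}_P^{\phi}|$ from \cref{joinirrprop}, this will give $\ell(\Tam{(P,\phi)})=|\mathcal{C}_P^{\phi}|$ once I produce one chain of that length.

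Next I would show that a chain of length $|\mathcal{C}_P^{\phi}|$ corresponds to building up $\mathcal{C}_P^{\phi}$ one element at a time through torclosed subsets; the sequence in which elements are added is then a linear order $\prec$ on $\mathcal{C}_P^{\phi}$. I claim $\prec$ is a linear extension of $\leq_{prod}$, and conversely every linear extension arises this way. For the forward direction: if $(x,i)\le_{prod}(y,j)$ with $(x,i)\neq(y,j)$, I must check that whenever a torclosed set contains $(y,j)$ but not $(x,i)$, adding $(x,i)$ alone keeps it torclosed and removing $(y,j)$ from it keeps it torclosed; the point is that the torclosure condition only ever forces elements "upward" in the product order, so an order filter of $\leq_{prod}$ is automatically torclosed and the singletons can be added in any $\leq_{prod}$-compatible order. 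Concretely, each initial segment of $\prec$ must be simultaneously a torclosed subset and — being reachable by single-element steps from below — an order ideal of $(\mathcal{C}_P^{\phi},\leq_{prod})$; dually its complement is an order filter. For the converse, given a linear extension of $\leq_{prod}$, its initial segments' complements are all order filters, hence all torclosed (since an order filter of $\leq_{prod}$ contains, for any $(x,i)$ and $(\phi(i+1),j)$ in it with $i<j$, also $(x,j)$ because $(x,i)\le_{prod}(x,j)$), giving a saturated chain of length $|\mathcal{C}_P^{\phi}|$. This simultaneously identifies the longest chains with linear extensions and, by intersecting all of them, identifies the spine with the order filters of $(\mathcal{C}_P^{\phi},\leq_{prod})$ — an element $F$ of $\Tam{(P,\phi)}$ lies on some longest chain iff both $F$ and $\mathcal{C}_P^{\phi}\setminus F$ are order filters / order ideals of $\leq_{prod}$, which happens iff $F$ is an order filter of $\leq_{prod}$ that is also torclosed, and order filters of $\leq_{prod}$ are always torclosed.

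Finally, $\ell(\Tam{(P,\phi)})=|\mathcal{C}_P^{\phi}|=|\mathrm{JIrr}(\Tam{(P,\phi)})|$ is exactly join-extremality by definition. The main obstacle I anticipate is the careful verification that the torclosure condition is "compatible" with $\leq_{prod}$ in the precise sense needed — namely that adding a single $\leq_{prod}$-minimal-available element to a torclosed order-filter-of-$\leq_{prod}$ again yields a torclosed set, and that no torclosed set strictly between two consecutive initial segments of a $\leq_{prod}$-linear-extension can exist; this requires unwinding \cref{lemCovers} and the definition of torclosed to see that the maximum element $(z,k)$ of $c\setminus b$ forces $c\setminus b$ to be a $\leq_{prod}$-lower set inside $\mathcal{C}_k$ relative to $b$, so that a length-$|\mathcal{C}_P^{\phi}|$ chain genuinely corresponds to a total order refining $\leq_{prod}$. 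Everything else is bookkeeping.
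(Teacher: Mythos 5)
Your overall architecture is sound and the easy halves go through: every cover adds at least one element of $\mathcal{C}_P^{\phi}$, so $\ell(\Tam(P,\phi))\leq|\mathcal{C}_P^{\phi}|$; every order filter of $\leq_{prod}$ is torclosed (if $(x,i),(\phi(i+1),j)\in F$ with $i<j$ then $(x,i)\leq_{prod}(x,j)$, and $\leq_{prod}$ refines the componentwise order); and reading a linear extension of $\leq_{prod}$ from the top down yields a saturated chain of length $|\mathcal{C}_P^{\phi}|$ whose members are $\leq_{prod}$-order filters. That already gives $\ell(\Tam(P,\phi))=|\mathcal{C}_P^{\phi}|$ and, with \cref{joinirrprop}, join-extremality. (Two orientation slips: the spine elements are themselves the order filters of $\leq_{prod}$ --- elements are added largest-first --- not complements of filters; and the spine is the union, not the intersection, of the longest chains.)

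The genuine gap is the converse direction. You justify it by saying that a torclosed set ``reachable by single-element steps from below'' is automatically a $\leq_{prod}$-order ideal/filter, and you locate the verification inside a single component ($c\setminus b$ being a $\leq_{prod}$-lower set inside $\mathcal{C}_k$). Both points are off: within one component $\leq_{prod}$ coincides with the component order, so there is nothing to check there, and the cross-component constraint is \emph{not} a local consequence of single covers. Concretely, in \cref{fig:firstExample} the chain $\emptyset\lessdot\{(\phi(2),2)\}\lessdot\{(\phi(2),2),(\phi(0),0)\}$ (the sets $\{1\}$ and $\{1,7\}$) adds one element per step, yet $\{1,7\}$ is not a $\leq_{prod}$-order filter since $(\phi(0),1)>_{prod}(\phi(0),0)$ is absent --- and indeed $\{1,7\}$ is not on the spine. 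The needed implication is therefore global and must use the continuation of the chain up to $\hat{1}$. The missing argument is: if $F$ lies on a longest chain but is not a $\leq_{prod}$-filter, then (using that $F$ meets each $\mathcal{C}_k$ in an order filter, and passing to consecutive indices) there are $(x,i)\in F$ with $(x,i+1)\notin F$; along the single-element-step chain from $F$ to $\mathcal{C}_P^{\phi}$ the element $(\phi(i+1),i+1)$, being the maximum of $\mathcal{C}_{i+1}$ and above $(x,i+1)$ there, must be present strictly before $(x,i+1)$ is added, and at that moment the set contains $(x,i)$ and $(\phi(i+1),i+1)$ but not $(x,i+1)$, violating torclosure. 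Without this (or an equivalent) step, the identification of longest chains with linear extensions, and of the spine with the order filters of $(\mathcal{C}_P^{\phi},\leq_{prod})$, is not established.
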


\begin{prop}
\label{propcompareTamari}
    The induced subposet of $\Tam{(P,\phi)}$ on the torclosed subsets that are order filters generated by some of the $a_{i,j}$
    is a sublattice and is isomorphic to $\Tam_{n+1}$.
\end{prop}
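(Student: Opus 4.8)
The plan is to give the subposet an explicit coordinate description, to observe that the resulting object does not actually depend on $P$, and then to identify it with the classical Tamari lattice through the special case where $P$ is a chain.

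\emph{Step 1 (coordinates).} First I would note that, since every element of $\mathcal{C}_P^{\phi}$ lying above $a_{i,j}$ sits in the single component $\mathcal{C}_j$, an order filter $F$ of $\mathcal{C}_P^{\phi}$ is generated by some of the $a_{i,j}$ if and only if, for each $k<n$, the component $F\cap\mathcal{C}_k$ is either empty or the principal order filter of $\mathcal{C}_k$ generated by a point $(\phi(t_k),k)$ of the chain $\phi$ (here one uses that $a_{0,k}>a_{1,k}>\dots>a_{k,k}$ is a chain in $\mathcal{C}_k$). Such an $F$ is encoded bijectively by a word $(t_0,\dots,t_{n-1})$ with $t_k\in\{0,1,\dots,k+1\}$, the value $k+1$ coding $F\cap\mathcal{C}_k=\emptyset$, and the inclusion order becomes $F\subseteq F'\iff t_k\ge t'_k$ for all $k$.

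\emph{Step 2 ($P$ drops out).} Next I would work out, in these coordinates, when such an $F$ is torclosed. Since the upper set of $(\phi(t_i),i)$ inside $\mathcal{C}_i$ always contains the chain point $(\phi(t_i),i)$ itself, it suffices to test the torclosed condition with witnesses on $\phi$, and one gets that $F$ is torclosed if and only if it satisfies the numerical condition $(\star)$: for all $i<j$, if $t_i\le i$ and $t_j\le i+1$ then $t_j\le t_i$. As $(\star)$ does not mention $P$, the subposet in question — call it $\mathcal{S}$ — depends only on $n$. Specialising $P$ to the $n$-element chain with $\phi$ its whole chain makes every point of $\mathcal{C}_P^{\phi}$ equal to some $a_{i,j}$, so there $\mathcal{S}$ is all of $\Tam{(P,\phi)}$; identifying this lattice with $\Tam_{n+1}$ is the content of \cref{propTamari} (equivalently, the classical bijection between torclosed subsets and binary trees or Dyck paths, under which the word $(t_k)$ records the staircase heights). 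Hence $\mathcal{S}\cong\Tam_{n+1}$ as posets.

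\emph{Step 3 (sublattice).} It remains to see that $\mathcal{S}$ is closed under the meet and join of $\Tam{(P,\phi)}$. The meet is intersection, and intersecting two elements of $\mathcal{S}$ replaces each $t_k$ by $\max(t_k,t'_k)$, which again satisfies $(\star)$. For the join, $F\cup F'$ is already an order filter of the required shape, with word $(\min(t_k,t'_k))_{k}$; its torclosed closure is reached by repeatedly performing, whenever $(\star)$ fails at a pair $i<j$, the replacement $t_j\mapsto t_i$, and each such step only adjoins chain points to the $j$-th component before re-closing, so it keeps every component empty or principal along $\phi$. Since the coordinates strictly decrease, the process terminates inside $\mathcal{S}$. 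Thus $\mathcal{S}$ is a sublattice, and, a poset isomorphism between finite lattices being a lattice isomorphism, $\mathcal{S}\cong\Tam_{n+1}$ as lattices.

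\emph{The hard part.} The only genuinely nontrivial ingredient is matching, in Step 2, the coordinate order on words satisfying $(\star)$ — together with the join of Step 3 — with the rotation order of the usual Tamari lattice $\Tam_{n+1}$: this is precisely the special case $P$ a chain, namely \cref{propTamari}. The coordinate description and the closure argument are otherwise routine bookkeeping.
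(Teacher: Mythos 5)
Your argument is correct: the coordinatization of the $a_{i,j}$-generated filters by words $(t_0,\dots,t_{n-1})$, the reduction of the torclosed condition to the $P$-independent inequality $(\star)$, the closure computation showing the join of two such filters stays in the subposet, and the final identification via the chain case (\cref{propTamari}) all check out (e.g.\ for $n=3$ the condition $(\star)$ yields exactly the $14$ elements matching the example after the proposition). The paper omits its proof of this statement, but your route — coordinatize along $\phi$, observe that $P$ drops out, reduce to Pallo's description of $\Tam_{n+1}$ — is evidently the intended one.
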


\begin{ex}
    In \cref{fig:latticefirstExample}, forgetting the torclosed subsets $\{1,2\}$, $\{1,2,3\}$, $\{1,2,7\}$ and $\{1,2,3,5\}$ gives the sublattice $\Tam_4$.
\end{ex}

Denote $m_P^{\phi} := |\Tam{(P,\phi)}|$ and $I_i:=I_P(\phi(i))$ for all $i<n$.
\cref{propcompareTamari} gives $m_P^{\phi} \geq \frac{1}{n+2}\binom{2n+2}{n+1}$. Counting torclosed subsets with elements in only one component gives another lower bound $\sum_{i<n} |I_i| -(n-1)$. 
An obvious upper bound is $|I_0|\times |I_1| \times \cdots \times |I_{n-1}|$. 
If $\phi$ has one element then $m_P^{\phi}=2$ and if $\phi$ has $2$ elements then $m_P^{\phi}=2+|J(I_1)|$.

\begin{prop}
\label{propComptage3}
 If $\phi$ has $3$ elements then
$m_P^{\phi} = 1 + |J(I_1)| + \big|J\big(\,(\mathcal{C}_P^{\phi}\,,\,\leq_{prod})\,\big)\big| + |J(I_2\setminus I_1)|$.
\end{prop}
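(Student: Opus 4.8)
The plan is to enumerate the torclosed subsets of $\mathcal{C}_P^{\phi}$ when $\phi$ has $3$ elements (so $n=3$, components $\mathcal{C}_0,\mathcal{C}_1,\mathcal{C}_2$) by splitting according to which components a torclosed subset $F$ meets, and matching each class of subsets to one of the four terms. Write $F_i := F\cap \mathcal{C}_i$. The torclosed condition, since only the pairs $(i,j)=(0,1),(0,2),(1,2)$ occur, reads: $F$ is an order filter, and $(x,0)\in F$ with $(\phi(1),j)\in F$ forces $(x,j)\in F$ (for $j=1,2$), and $(x,1)\in F$ with $(\phi(2),2)\in F$ forces $(x,2)\in F$. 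Note $(\phi(1),1)$ is the minimum of $\mathcal{C}_1$ and $(\phi(2),2)$ is the minimum $a_{2,2}$ of $\mathcal{C}_2$, so these conditions fire precisely when the lower component is nonempty.

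First I would dispose of the easy strata. If $F=\emptyset$: $1$ subset, giving the leading $1$. If $F$ meets $\mathcal{C}_0$: then $(\phi(0),0)=(\hat 0,0)\in F$, so by the torclosed condition applied with $x=\hat 0$ (which lies in every $I_P(\phi(i))$), as soon as $\mathcal{C}_1$ or $\mathcal{C}_2$ is nonempty the respective minimum is present, which in turn forces, for every $(x,0)\in F$, that $(x,1)$ and $(x,2)$ lie in $F$. After a short argument (the filter condition within each component, plus these forcing relations, plus the $(1,2)$ condition) this identifies the torclosed subsets meeting $\mathcal{C}_0$ exactly with the order filters of $(\mathcal{C}_P^{\phi},\leq_{prod})$ that are nonempty in the $0$-component. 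I would then observe, using \cref{propspinechainsTam}, that the order filters of $(\mathcal{C}_P^{\phi},\leq_{prod})$ whose $0$-part is empty are in bijection with order filters of the subposet on $\mathcal{C}_1\sqcup\mathcal{C}_2$ with the product order — and these, together with the strata below, will be absorbed so that the count of \emph{all} order filters of $(\mathcal{C}_P^{\phi},\leq_{prod})$ is what appears; care is needed here to avoid double counting, so I would instead directly say: the torclosed subsets with $F_0\neq\emptyset$ biject with order filters of $(\mathcal{C}_P^{\phi},\leq_{prod})$ having $F_0\ne\emptyset$, contributing $|J(\mathcal{C}_P^{\phi},\le_{prod})| - |J(\text{restriction to }\mathcal{C}_1\sqcup\mathcal{C}_2)|$, and then fold the subtracted piece into the $F_0=\emptyset$ strata.

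Now assume $F_0=\emptyset$. Split on $F_1$. If $F_1\neq\emptyset$ then $(\phi(1),1)\in F_1$, so if also $F_2\ne\emptyset$ the minimum $(\phi(2),2)$ of $\mathcal{C}_2$ is present and the $(1,2)$ condition forces $(x,2)\in F$ for every $(x,1)\in F_1$; combined with the filter conditions in $\mathcal{C}_1$ and $\mathcal{C}_2$ this shows the pair $(F_1,F_2)$ is an order filter of the product poset on $\mathcal{C}_1\sqcup\mathcal{C}_2$ with $F_1\ne\emptyset$. If $F_1\neq\emptyset$ and $F_2=\emptyset$, then $F$ is just a nonempty order filter of $\mathcal{C}_1\cong I_P(\phi(1))=I_1$, so there are $|J(I_1)|-1$ of these (discarding $\emptyset$). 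Together the $F_0=\emptyset,F_1\ne\emptyset$ case gives (order filters of $\mathcal{C}_1\sqcup\mathcal{C}_2$ product poset with $F_1\ne\emptyset$) $=|J(I_1)| - 1 + (\text{those with }F_1\ne\emptyset, F_2\ne\emptyset)$; I will reorganize so the net contribution of all $F_0=\emptyset$ strata equals $|J(I_1)| + |J(\mathcal{C}_1\sqcup\mathcal{C}_2,\le_{prod})|_{F_1=\emptyset} + \dots$, ultimately producing the stated $|J(I_1)|$ together with the $|J(I_2\setminus I_1)|$ term. Finally the stratum $F_0=F_1=\emptyset$: here $F=F_2$ is an order filter of $\mathcal{C}_2\cong I_2$, but with \emph{no} constraint from other components, so it is simply any order filter of $I_2$; however, I must be careful — an order filter of $\mathcal{C}_2$ that is nonempty and does not contain $a_{2,2}$ is fine, and the ones that matter separately (not already counted inside $J(\mathcal{C}_P^{\phi},\le_{prod})$) are those supported off $I_1$, giving $|J(I_2\setminus I_1)|$. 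The bookkeeping identity $|J(\mathcal{C}_P^{\phi},\le_{prod})| = |J(\mathcal{C}_1\sqcup\mathcal{C}_2,\le_{prod})| + (\text{filters meeting }\mathcal{C}_0)$ and a second such identity for $\mathcal{C}_2$ versus $I_2\setminus I_1$ are what make the four terms collapse correctly.

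\textbf{Main obstacle.} The real work is the combinatorial bookkeeping: carefully partitioning torclosed subsets into disjoint strata indexed by $(\,F_0=\emptyset\text{?},\,F_1=\emptyset\text{?},\,F_2=\emptyset\text{?}\,)$, correctly identifying each nonempty stratum with order filters of an explicit poset (using that $\hat 0 = \phi(0)$ lies in every $I_P(\phi(i))$ to get the forcing, and using \cref{propspinechainsTam} to recognize $J(\mathcal{C}_P^{\phi},\le_{prod})$), and then verifying that the inclusion–exclusion of these strata telescopes exactly to $1 + |J(I_1)| + |J(\mathcal{C}_P^{\phi},\le_{prod})| + |J(I_2\setminus I_1)|$ with no off-by-one error coming from the empty filter in each piece. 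I expect the subtlety to be in the strata where one component is empty but a lower one is not — making sure the torclosed condition is vacuous there and that such subsets are counted once — and in matching the piece "$F_0=\emptyset$, filters of $\mathcal{C}_1\sqcup\mathcal{C}_2$ with $F_1\ne\emptyset$" cleanly against the product-order count; once the strata are pinned down, each identification is a short direct check.
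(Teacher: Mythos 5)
Your overall strategy (stratify the torclosed subsets $F$ by which components they meet and match the strata to the four terms) is the right one, but two of your key structural claims are false, and they are false precisely at the point where the term $|J(I_2\setminus I_1)|$ is generated. First, $(\phi(1),1)=a_{0,1}$ and $(\phi(2),2)=a_{0,2}$ are the \emph{maxima} of $\mathcal{C}_1$ and $\mathcal{C}_2$, not the minima ($a_{k,k}=(\hat 0,k)$ is the minimum). Your conclusion that the $(0,1)$ and $(1,2)$ conditions fire exactly when the higher-indexed component is nonempty survives this (a nonempty order filter of $\mathcal{C}_k$ must contain the maximum), but your treatment of the $(0,2)$ condition does not: its trigger is $(\phi(1),2)=a_{1,2}\in F$, which is \emph{strictly stronger} than $F_2\neq\emptyset$. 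Consequently, when $F_0\neq\emptyset$ and $F_1=\emptyset$, the part $F_2$ is \emph{not} forced to contain the minimum of $\mathcal{C}_2$: it may be any order filter of $I_2$ avoiding $\phi(1)$, i.e.\ any order filter of $I_2\setminus I_1$, or all of $I_2$. (Concretely, $\{1,7\}$ and $\{1,2,7\}$ in \cref{fig:latticefirstExample} are torclosed but violate your claim.) This stratum, $F_0\neq\emptyset$ with $F_1=\emptyset$, is exactly where $|J(I_2\setminus I_1)|$ comes from; you instead attribute that term to the stratum $F_0=F_1=\emptyset$, which actually contributes $|J(I_2)|-1$ (all three torclosed conditions are vacuous there, so $F_2$ is an arbitrary nonempty filter of $I_2$).

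Second, the asserted bijection between torclosed subsets meeting $\mathcal{C}_0$ and order filters of $(\mathcal{C}_P^{\phi},\leq_{prod})$ that are nonempty in component $0$ cannot hold: since $(\hat 0,0)$ is the global minimum of the product order, the latter set consists of the single filter $\mathcal{C}_P^{\phi}$, whereas the former has $|J(I_2\setminus I_1)|+3$ elements. The correct accounting is: $1$ for $F=\emptyset$; $|J(I_1)|-1$ for $F_0=F_2=\emptyset$, $F_1\neq\emptyset$; $|J(I_2)|-1$ for $F_0=F_1=\emptyset$, $F_2\neq\emptyset$; the number of pairs of nonempty filters $F_1\subseteq F_2$ for $F_0=\emptyset$, $F_1,F_2\neq\emptyset$ (condition $(1,2)$ forces $F_1\subseteq F_2$); and $|J(I_2\setminus I_1)|+3$ for $F_0\neq\emptyset$. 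One then checks $\big|J\big((\mathcal{C}_P^{\phi},\leq_{prod})\big)\big|=1+|J(I_2)|+\#\{\text{nonempty pairs }F_1\subseteq F_2\}$, which collapses the sum to the stated formula. Your write-up defers all of this reorganization ("I will reorganize so that\dots"), so even setting aside the errors above, the identity is never actually established.
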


\begin{ex}
    For \cref{fig:latticefirstExample}, \cref{propComptage3} gives $m_P^{\phi} = 1+3+11+3=18$ elements. 
\end{ex}

\begin{question}
    Can we find a general formula for the number of elements of $\Tam{(P,\phi)}$ ? 
\end{question}

\subsection{Edge-labellings and congruences}
\label{sectionCongruenceTopo}

The choice of any linear extension $\mathcal{L}: x_1\succ x_2\succ \cdots \succ x_{|\mathcal{C}_P^{\phi}|}$ of $\big(\mathcal{C}_P^{\phi},\leq_{prod}\big)$ gives, by \cref{propspinechainsTam}, a longest chain 
$\psi_{\mathcal{L}}: \emptyset \lessdot \{x_1\} \lessdot \{x_1,x_2\}\lessdot \cdots \lessdot \{x_1,x_2,\dots,x_{|\mathcal{C}_P^{\phi}|}\} \lessdot \mathcal{C}_P^{\phi}$ of $\Tam(P,\phi)$. 
Moreover, $\mathcal{L}$ defines a numbering of the elements of $\mathcal{C}_P^{\phi}$ by assigning $i$ to the element $x_i$. Thus, by \cref{joinirrprop} we get a numbering of $\mathrm{JIrr}(\Tam(P,\phi))$; the join-irreducible generated by $x_i$ being denoted by $i$.
Using \cref{lemCovers}, we know (see \cref{background}) that we have an edge-labelling of $\Tam{(P,\phi)}$ that sends $b\lessdot c$ to the join-irreducible generated by $\mathrm{max}(c\setminus b)$. Thus we get an edge-labelling $\omega_{\mathcal{L}}$ defined by $\omega_{\mathcal{L}}~(b~\lessdot~c)~:=~i$ where $\mathrm{max}(c\setminus b)=x_i$. 
Then using the chain $\psi_{\mathcal{L}}$ in \cref{deflabelling}, we obtain

\begin{prop}
\label{propgamma}
We have $\gamma_1=\omega_{\mathcal{L}}$. We have $\gamma_2=\omega_{\mathcal{L}}$ if and only if $\mathcal{L}$ satisfies $(x,j)\succ (y,i)$, for all $j>i$, and if $(x,k)\not \leq a_{i,k}$ then $(x,k)\succ a_{i,k}$, for all $(x,k)\in \mathcal{C}_k$ and $i\leq k$. Such linear extensions exist.
\end{prop}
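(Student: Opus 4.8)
The statement has three parts: (i) $\gamma_1 = \omega_{\mathcal{L}}$ always; (ii) the characterization of when $\gamma_2 = \omega_{\mathcal{L}}$; and (iii) existence of a linear extension satisfying that characterization. For (i): by \cref{lemCovers} and \cref{lemEquiJSD}, $\gamma_1(b\lessdot c) = \min\{\delta(j) : j \in \mathrm{JIrr}, j \le c, j \not\le b\}$, and we already know the minimum of $I_L(c)\setminus I_L(b)$ is the join-irreducible $j$ generated by $\max(c\setminus b)$. So I'd want to show that this particular $j$ also minimizes $\delta$. Since $\delta(j') = \min\{i : j' \le \psi_{\mathcal{L}}(i)\}$ and $\psi_{\mathcal{L}}(i) = \{x_1,\dots,x_i\}$, we have $\delta(j') = $ the index (in the numbering from $\mathcal{L}$) of the generator of $j'$. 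So $\gamma_1(b\lessdot c) = \min\{$ index of $x_\ell : x_\ell \in c, x_\ell \notin b, x_\ell$ the generator of a join-irreducible below $c$ not below $b\}$. I need that among all such $x_\ell$, the one with smallest index is $\max(c\setminus b)$ — equivalently, that the join-irreducibles strictly below $c$ but not below $b$ are exactly generated by elements of $c\setminus b$, and the $\le_{prod}$-maximal such element has the smallest $\mathcal{L}$-index (since $\mathcal{L}$ reverses $\le_{prod}$). The first point follows from the structure of principal order filters as join-irreducibles (\cref{joinirrprop}) together with $c\setminus b \subseteq \mathcal{C}_k$ having a maximum; the second is immediate from $\mathcal{L}$ being a linear extension of $\le_{prod}$.

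For (ii): recall from \cref{thmlabelling} that $\gamma_2 \le \gamma_1$ always, with equality everywhere iff all $x_i$ in the chain $\psi_{\mathcal{L}}$ are left modular. So the claim reduces to: all the lattice elements $\psi_{\mathcal{L}}(i)$ are left modular iff $\mathcal{L}$ satisfies the two listed conditions. The plan is to compute $\gamma_2(b\lessdot c) = \min\{i : b \vee \psi_{\mathcal{L}}(i) \ge c\}$ directly. Since $\psi_{\mathcal{L}}(i) = \{x_1,\dots,x_i\}$ and joins in $\Tam(P,\phi)$ are "torclosure of the union", $b\vee\psi_{\mathcal{L}}(i)$ is the smallest torclosed set containing $b$ and $x_1,\dots,x_i$. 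So $\gamma_2(b\lessdot c) \le i$ iff adding $x_1,\dots,x_i$ to $b$ and torclosing produces something $\ge c$, i.e. contains $\max(c\setminus b)$. The obstruction to $\gamma_2 = \gamma_1$ at a cover $b\lessdot c$ with $\max(c\setminus b) = x_\ell$ is precisely that $x_\ell$ gets "dragged in" by the torclosure condition from some $x_{\ell'}$ with $\ell' < \ell$ rather than needing $x_\ell$ itself — and the torclosure condition involves exactly triples $(x,i),(\phi(i+1),j) \Rightarrow (x,j)$, i.e. the relations encoded by the $a_{i,k}$'s and by cross-component comparisons. I expect a careful case analysis here showing that the two displayed conditions on $\mathcal{L}$ — that $(x,j) \succ (y,i)$ whenever $j > i$ (components are reverse-ordered), and that $(x,k) \succ a_{i,k}$ whenever $(x,k)\not\le a_{i,k}$ — are exactly what prevents any such dragging, hence exactly the condition for $\gamma_2 = \gamma_1$ at every cover, which by \cref{thmlabelling} is left modularity of the whole chain. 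I'd phrase this cleanly using \cref{lemmeTechniqueMod} if a doubling description is available, but more likely I'd argue directly with the torclosure operator.

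For (iii): I need to exhibit a linear extension $\mathcal{L}$ of $(\mathcal{C}_P^{\phi}, \le_{prod})$ obeying both constraints simultaneously. The natural candidate: order the components by decreasing index (all of $\mathcal{C}_{n-1}$ first, then $\mathcal{C}_{n-2}$, etc. — this immediately gives $(x,j)\succ (y,i)$ for $j>i$), and within each component $\mathcal{C}_k$ order the elements so that $(x,k) \succ a_{i,k}$ whenever $(x,k)\not\le a_{i,k}$ while still refining $\le_{prod}$ restricted to $\mathcal{C}_k$ (which is just $\le$ on $I_P(\phi(k))$). I need to check this within-component requirement is consistent — i.e., that the relation "$(x,k) \le a_{i,k}$ or [incomparable but must come after]" has no cycles. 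Since the $a_{i,k}$ form a chain $a_{0,k} > \cdots > a_{k,k}$ and the constraint only pushes non-comparable elements above the $a_{i,k}$'s (not below), one can first linearly order $\{x \in I_P(\phi(k)) : x \text{ comparable to all } a_{i,k}\} = $ the chain of $a_{i,k}$'s plus elements between consecutive ones, then insert the remaining elements anywhere above their $\le$-predecessors but after every $a_{i,k}$ they're incomparable to; acyclicity is clear because the extra constraints all point "upward past the spine." Combining with reverse-component order, and checking that reverse-component order is compatible with $\le_{prod}$ (if $(x,i) <_{prod} (y,j)$ then $i \le j$, so $y$'s component comes first or same — consistent), gives the desired $\mathcal{L}$.

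The main obstacle I anticipate is part (ii): pinning down the precise combinatorial description of $b \vee \psi_{\mathcal{L}}(i)$ via the torclosure operator and tracking exactly which cover relations can fail $\gamma_2 = \gamma_1$, so that the two conditions on $\mathcal{L}$ emerge as necessary and sufficient rather than merely sufficient. This requires understanding the torclosure operation well enough to characterize when $\max(c \setminus b)$ is forced into $b \vee \{x_1,\dots,x_{\ell-1}\}$, and relating those forcing chains back to cross-component comparisons and to the $a_{i,k}$ incomparabilities.
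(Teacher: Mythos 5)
The paper, being an extended abstract, prints no proof of \cref{propgamma}, so I am judging your plan against the argument it evidently intends. Your treatment of the first claim is correct and essentially complete: $\delta$ of the join-irreducible generated by $x_\ell$ equals $\ell$ because every prefix of $\mathcal{L}$ is a $\leq_{prod}$-order filter and hence contains the whole principal filter of each of its elements; the join-irreducibles $\leq c$ and $\not\leq b$ are exactly those generated by elements of $c\setminus b$ (both $b$ and $c$ being order filters); and $\max(c\setminus b)$ is $\leq_{prod}$-above the rest of $c\setminus b$, hence has the least $\mathcal{L}$-index. (Your ``strictly below $c$'' should read ``$\leq c$''.) Your existence argument for the third claim is also sound: the only constraints beyond refining $\leq_{prod}$ push elements incomparable to some $a_{i,k}$ \emph{earlier} than it, and a potential function such as $x\mapsto\min\{i \mid x\leq\phi(i)\}$ strictly decreases along each added constraint while weakly decreasing along $\leq_{prod}$ within a component, so no cycle arises; the reverse-component ordering is compatible because $\leq_{prod}$ only relates elements whose component indices are weakly ordered the same way.

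The genuine gap is the second claim, which is the substance of the proposition and which you leave as ``I expect a careful case analysis here.'' You have correctly set it up --- $\gamma_2(b\lessdot c)<\gamma_1(b\lessdot c)=\ell$ precisely when $\max(c\setminus b)=x_\ell$ lies in the torclosure of $b\cup\{x_1,\dots,x_m\}$ for some $m<\ell$ --- but neither direction is carried out. For sufficiency you must analyse the forcing chains of the closure operator: an order-filter step producing $(z,k)$ from $(w,k)$ with $w<z$ cannot start in $b$ (else $(z,k)\in b$) nor at some $x_p$ with $p\leq m$ (since $(w,k)<_{prod}x_\ell$ forces its index past $\ell$), and a torclosure step producing $(z,k)$ from $(z,i)$ and $a_{k-i-1,k}=(\phi(i+1),k)$ must be ruled out using exactly the two hypotheses: condition~B keeps $a_{k-i-1,k}$ out of $\{x_1,\dots,x_m\}$ when $z\not\leq\phi(i+1)$ (and when $z\leq\phi(i+1)$ the order-filter case applies instead), while condition~A controls when $(z,i)$ from a lower component can already be present. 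For necessity you must exhibit, for each violated instance of A or B, a concrete cover $b\lessdot c$ on which $\gamma_2<\gamma_1$; this is not automatic and requires constructing suitable torclosed $b$. Until this analysis is written out, the biconditional is asserted rather than proved, so the proposal as it stands establishes only the first and third claims.
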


Fix a linear extension $\mathcal{L}$ as described in \cref{propgamma}.
See \cref{fig:posetfirstExample} for an example with the associated numbering of $\mathcal{C}_P^{\phi}$, and the edge-labelling $\omega_{\mathcal{L}}$ for \cref{fig:latticefirstExample} is $\mathrm{max}(c\setminus b)$ for covers $b\lessdot c$.
 From \cref{propgamma,thmlabelling,rmkLiu} we can obtain 

\begin{thm}
\label{thmleftmodular}
$\Tam(P,\phi)$ is left modular. The edge-labelling $\omega_{\mathcal{L}}$ is an $EL$-labelling such that any interval has at most one decreasing chain. Thus for all intervals $[x,y]$ of $\Tam(P,\phi)$, we have $\mu(x,y) \in \{-1,0,1\}$. 
\end{thm}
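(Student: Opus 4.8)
The plan is to run the labelling $\omega_{\mathcal{L}}$ through the machinery of \cref{sectionGeneralLattice} and then exploit the fact that left modularity is a self-dual property.

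First I would record that, for the linear extension $\mathcal{L}$ fixed just after \cref{propgamma}, that proposition gives $\gamma_1=\omega_{\mathcal{L}}=\gamma_2$, where $\gamma_1,\dots,\gamma_4$ are the four labellings of \cref{deflabelling} associated to the longest chain $\psi_{\mathcal{L}}$. By \cref{thmlabelling} we always have $\gamma_1=\gamma_4$, hence $\gamma_2=\gamma_4$, and the ``moreover'' part of \cref{thmlabelling} then forces every vertex of $\psi_{\mathcal{L}}$ to be left modular. So $\psi_{\mathcal{L}}$ is a maximal left modular chain and $\Tam(P,\phi)$ is left modular. At this point $\gamma_1=\gamma_2=\gamma_3=\gamma_4=\omega_{\mathcal{L}}$, and Liu's theorem (\cref{rmkLiu}) says that the common labelling attached to a maximal left modular chain is an $EL$-labelling; hence $\omega_{\mathcal{L}}$ is an $EL$-labelling of $\Tam(P,\phi)$.

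It then remains to bound the number of decreasing maximal chains: by the Möbius-function statement recalled in \cref{background}, once we know that every interval $[x,y]$ of $\Tam(P,\phi)$ has at most one maximal decreasing chain for $\omega_{\mathcal{L}}$, we get $\mu(x,y)\in\{-1,0,1\}$. I would do this in two steps. The first is an easy lemma: along any saturated chain $x=z_0\lessdot z_1\lessdot\dots\lessdot z_k=y$, the labels are pairwise distinct. Indeed $\omega_{\mathcal{L}}=\gamma_1$ sends $z_{t-1}\lessdot z_t$ to $\min\bigl(I_L(z_t)\setminus I_L(z_{t-1})\bigr)$ (by \cref{lemEquiJSD,propJSD,propgamma}), and if steps $t<t'$ carried the same label $j$ then $j\le z_t\le z_{t'-1}$ would contradict $j\notin I_L(z_{t'-1})$. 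Consequently ``weakly decreasing'' and ``strictly decreasing'' mean the same thing for $\omega_{\mathcal{L}}$-label words of maximal chains. The second step is to apply the first two paragraphs to the opposite lattice $\Tam(P,\phi)^{\mathrm{op}}$: left modularity is self-dual, so the reverse of $\psi_{\mathcal{L}}$ is a maximal left modular chain of $\Tam(P,\phi)^{\mathrm{op}}$, and Liu's theorem yields an $EL$-labelling of $\Tam(P,\phi)^{\mathrm{op}}$ which, after the order-reversing relabelling $i\mapsto |\mathcal{C}_P^{\phi}|+1-i$ of the label set, coincides with $\omega_{\mathcal{L}}$ (it is the dual labelling $\gamma_4$, already identified with $\omega_{\mathcal{L}}$). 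The uniqueness of the ascending maximal chain in each interval of $\Tam(P,\phi)^{\mathrm{op}}$ translates into a unique maximal chain of $[x,y]$ whose $\omega_{\mathcal{L}}$-labels are strictly decreasing from bottom to top; by the first step this is the unique decreasing maximal chain, and $\mu(x,y)\in\{-1,0,1\}$ follows.

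The hard part will be the bookkeeping in this last step: one has to verify carefully that the labelling produced by Liu's construction on $\Tam(P,\phi)^{\mathrm{op}}$ from the reversed left modular chain really is $\omega_{\mathcal{L}}$ up to reversing the order on labels (i.e. that it is $\gamma_4$, which \cref{thmlabelling} and \cref{propgamma} let us identify with $\omega_{\mathcal{L}}$), and to keep straight what ``ascending maximal chain in the opposite lattice'' means back in $\Tam(P,\phi)$. Everything else is a direct application of \cref{propgamma}, \cref{thmlabelling} and \cref{rmkLiu}. (One could instead quote the ``sphere-or-point'' homotopy conclusion, and hence $\mu\in\{-1,0,1\}$, from the literature on left modular lattices, but the route above stays inside the framework already set up in \cref{background,sectionGeneralLattice}.)
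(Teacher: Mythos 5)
Your first paragraph is exactly the route the paper takes: \cref{propgamma} gives $\gamma_1=\omega_{\mathcal{L}}=\gamma_2$, \cref{thmlabelling} gives $\gamma_1=\gamma_4$ unconditionally, hence $\gamma_2=\gamma_4$, so by the ``moreover'' part every element of $\psi_{\mathcal{L}}$ is left modular and \cref{rmkLiu} yields the $EL$-labelling. Your auxiliary lemma that the $\omega_{\mathcal{L}}$-labels along any saturated chain are pairwise distinct is also correct.

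The duality step, however, contains a genuine error. Writing $N:=|\mathcal{C}_P^{\phi}|$, when you pass to $\Tam(P,\phi)^{\mathrm{op}}$ you reverse two things simultaneously: the direction in which a maximal chain of an interval is read, and (via $i\mapsto N+1-i$) the order on the label set. These two reversals cancel. If a maximal chain of $[x,y]$ read upward in $\Tam(P,\phi)$ carries labels $\lambda_1<\lambda_2<\cdots<\lambda_k$, then the same chain read upward in the opposite lattice carries $N+1-\lambda_k<\cdots<N+1-\lambda_1$, again an increasing word. So the unique ascending chain of each interval of $\Tam(P,\phi)^{\mathrm{op}}$ for Liu's labelling of the reversed chain is precisely the unique ascending chain of $\Tam(P,\phi)$ for $\omega_{\mathcal{L}}$; it gives no control whatsoever on descending chains. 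What you would actually need is that $\omega_{\mathcal{L}}$ itself, \emph{without} the order-reversing relabelling, is an $EL$-labelling of the opposite lattice, and that cannot follow from left modularity alone: since left modularity is self-dual, your argument, if valid, would prove that \emph{every} left modular lattice has at most one decreasing maximal chain per interval and hence Möbius values in $\{-1,0,1\}$. This is false --- the partition lattice $\Pi_n$ is supersolvable, hence left modular, yet $|\mu(\hat{0},\hat{1})|=(n-1)!$. The uniqueness of the decreasing chain therefore requires an argument specific to $\omega_{\mathcal{L}}$ on $\Tam(P,\phi)$ (for instance a direct analysis, using \cref{lemCovers} and the explicit description of $\mathrm{max}(c\setminus b)$, of which label words can occur on a maximal chain of a given interval), not a formal dualization of the $EL$-property.
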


We now turn our attention to the congruences of $\Tam(P,\phi)$. The next result gives a characterization of the $D$ relation, where $(x,k)$ represents the join-irreducible generated by $(x,k)$. With \cref{propDayLW} it implies \cref{thmLW}.

\begin{prop}
\label{propDrelation}
We have $(x,i)\,D\,(y,i)$ if and only if there exists $k\leq i$ such that $y=\phi(k)$ and $y\not \leq x$. 
For all $i\neq j$, we have $(x,j)\,D\,(y,i)$ if and only if $i<j$, $y\leq x$, $ \phi(i+1) \not \leq x$ and there is no $y'\leq \phi(i)$ such that $y<y'\leq x$.
\end{prop}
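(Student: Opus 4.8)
The plan is to unwind \cref{propDrelation} directly from the definition of the join dependency relation $D$, using the explicit descriptions of the join-irreducibles (\cref{joinirrprop}) and of the covers (\cref{lemCovers}) already available. Recall that the join-irreducibles of $\Tam(P,\phi)$ are the principal order filters $F_{(x,k)}$ of $\mathcal{C}_P^\phi$, and that $(F_{(x,k)})_*$ is obtained by removing the single element $(x,k)$ from $F_{(x,k)}$ (this element being $\max$ of the set it adds, in the sense of \cref{lemCovers}). So $p\,D\,q$ with $p=F_{(x,i)}$, $q=F_{(y,k)}$ means: there is a torclosed set $Z$ with $(x,i)\in F_{(y,k)}\cup Z$ but $(x,i)\notin (F_{(y,k)}\setminus\{(y,k)\})\cup Z$. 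Equivalently, since being in the join means being in the smallest torclosed set containing the union, $p\,D\,q$ holds precisely when there is a torclosed $Z$ not containing $(x,i)$ such that adding $(y,k)$ and closing off (by the order-filter and torclosure rules) forces $(x,i)$ into the result, whereas adding $F_{(y,k)}\setminus\{(y,k)\}$ and $Z$ and closing off does not.

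First I would handle the same-component case $p=F_{(x,i)}$, $q=F_{(y,i)}$. Here the closure can only propagate $(y,i)$ to $(x,i)$ in two ways: directly, if $(y,i)\geq(x,i)$ in $\mathcal{C}_i$, i.e. $y\geq x$; or through the torclosure rule using some earlier component. Taking $Z$ to be a maximal torclosed set avoiding $(x,i)$ (these exist and are computed from meet-irreducibles via \cref{joinirrprop}), one checks that the propagation happens exactly when $(y,i)$ is of the form $a_{j,i}=(\phi(i-j),i)$ — equivalently $y=\phi(k)$ for some $k\le i$ — with $y\not\le x$, since only elements of the chain can trigger the torclosure condition "$(\phi(i'+1),j)\in F$'' from a lower component. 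I would make this precise by exhibiting the witnessing $Z$ (all of $\mathcal{C}_P^\phi$ below level $i$ except what is needed, plus the appropriate order-filter part of $\mathcal{C}_i$) and conversely arguing that if $y$ is not on the chain, no torclosed $Z$ avoiding $(x,i)$ can have $(x,i)$ forced in by adding $(y,i)$.

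Next I would treat the cross-component case $p=F_{(x,j)}$, $q=F_{(y,i)}$ with $i\ne j$. The torclosure rule moves information strictly upward in the component index, so a necessary condition is $i<j$. Adding $(y,i)$ to a torclosed $Z$ and closing: the order-filter closure pushes $(y,i)$ up within $\mathcal{C}_i$ to all $(y'',i)$ with $y''\ge y$, and then the torclosure rule, applied with the pair $(i,j)$, pushes $(x,j)$ into the set as soon as some $(x,i)\in Z\cup(\text{filter closure})$ together with $(\phi(i+1),j)$ lies there. So for $(x,j)$ to be forced in by $(y,i)$ but not by $(y,i)$'s strict downset, I expect the conditions: $y\le x$ (so that $(y,i)$'s upward closure in $\mathcal{C}_i$ reaches $(x,i)$, which can then trigger the torclosure), $\phi(i+1)\not\le x$ (so that $(x,i)$ wasn't already going to be pushed up by the chain element living in every sufficiently large torclosed set — i.e. so the contribution of $(y,i)$ is genuinely needed), and the minimality-type condition that there is no $y'\le\phi(i)$ with $y<y'\le x$ (which says $(y,i)$ is not "redundant'': if such a $y'$ existed, the element $(y',i)\in F_{(y,i)}\setminus\{(y,i)\}$ would already force $(x,j)$ in, killing the $D$-relation). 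I would verify both directions by constructing the explicit $Z$ and tracking the closure, and by a contrapositive argument for necessity.

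The main obstacle I anticipate is the bookkeeping in the cross-component case: correctly identifying what "closing off'' a set $Z\cup S$ produces — the torclosure operation is not a simple single-step rule, one must iterate the order-filter closure and the "extension'' rule until stabilization, and subtle interactions can arise when $Z$ already contains chain elements at intermediate levels between $i$ and $j$, or when $x$ is incomparable to some $a_{\cdot,j}$. Pinning down exactly why the clause "no $y'\le\phi(i)$ with $y<y'\le x$'' is the right redundancy condition — rather than some a priori more complicated statement involving several intermediate levels — will require care; I would organize it by observing that $F_{(y,i)}\setminus\{(y,i)\}$ equals the union of the principal filters $F_{(y',i)}$ over the covers $(y',i)\gtrdot(y,i)$ in $\mathcal{C}_i$, so $p\not D q$ iff one of these covers already satisfies the forcing condition, and then translating that back into the stated inequality. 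Once the closure dynamics are under firm control, both cases reduce to short verifications, and combining \cref{propDrelation} with \cref{propDayLW} immediately yields the description of congruences in \cref{thmLW}.
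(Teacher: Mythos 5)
The paper states this proposition without proof, so your proposal can only be judged on its own terms. Your overall plan is the right one: reduce $p\,D\,q$ to the question of whether $(x,j)$ lies in the torclosure of $Z\cup F_{(y,i)}$ but not of $Z\cup\bigl(F_{(y,i)}\setminus\{(y,i)\}\bigr)$, and then track how the single new element $(y,i)$ can propagate. In the same-component case you correctly isolate the key mechanism — the added element can only create new elements of $\mathcal{C}_i$ by playing the role of the chain element $(\phi(k),i)$ in the torclosure rule, which forces $y=\phi(k)$ — modulo a direction slip: $(x,i)$ lies in the upward closure of $(y,i)$ iff $x\geq y$, not $y\geq x$ (and you also tacitly assume $(x,i)$ exists, i.e.\ $x\leq\phi(i)$).

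The genuine gap is in the cross-component case: your closure analysis only lets $(y,i)$ act as a \emph{source} (upward closure inside $\mathcal{C}_i$ to reach $(x,i)$, then the extension rule from level $i$ to level $j$), which is what produces the condition $y\leq x$. But when $y=\phi(k)$ for some $k\leq i$, the new element $(y,i)=(\phi(k),i)$ also acts as the \emph{chain element} of the torclosure rule from component $k-1$ into component $i$: it injects elements $(u,i)$ for every $(u,k-1)$ already in $Z$, with $u$ not necessarily above $y$, and these can cascade up to component $j$ and reach $(x,j)$ even when $y\not\leq x$. This is not a hypothetical: the paper's own example lists $2\,D\,5$, i.e.\ $(z,2)\,D\,(\phi(1),1)$ with $z$ incomparable to $\phi(1)$, and one checks directly that the torclosed witness $Z=\{(\phi(2),2),(\hat 0,0)\}$ gives $\{(\phi(1),1)\}\vee Z=\hat 1\ni(z,2)$ while $\emptyset\vee Z=Z\not\ni(z,2)$ — exactly via the cascade $(\hat 0,0)\wedge(\phi(1),1)\Rightarrow(\hat 0,1)$, then $(\hat 0,1)\wedge(\phi(2),2)\Rightarrow(\hat 0,2)$, then upward closure to $(z,2)$. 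So the necessity argument you sketch ("$y\leq x$ is needed so that the upward closure of $(y,i)$ reaches $(x,i)$") is false, and your case analysis must be enlarged to cover the chain-element mechanism across components before the equivalence can be established; as a side effect you will also have to reconcile the outcome with the literal statement of the proposition, which this example does not satisfy as written.
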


\begin{ex}
In \cref{fig:posetfirstExample}, the $D$ relations between elements in the same component are $6D5, 4D3, 4D1, 3D1, 2D3,2D1$ and the others relations are $6D7, 4D7, 2D7, 4D6, 3D5,2D5$.
\end{ex}

\begin{thm}
\label{thmLW}
$\Tam(P,\phi)$ is join-congruence uniform.
\end{thm}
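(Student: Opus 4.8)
The plan is to apply \cref{propDayLW}: it suffices to show that $\Tam(P,\phi)$ contains no $D$-cycles. We already have the explicit description of the join dependency relation $D$ from \cref{propDrelation}, so the task reduces to a purely combinatorial verification that the directed graph on $\mathrm{JIrr}(\Tam(P,\phi)) = \mathcal{C}_P^{\phi}$ with an arrow $(x,i) \to (y,j)$ whenever $(x,i)\,D\,(y,j)$ is acyclic. The natural way to do this is to exhibit a function $f:\mathcal{C}_P^{\phi} \to \mathbb{N}$ (or to a totally ordered set) that strictly decreases along every $D$-arrow; any such $f$ immediately rules out $D$-cycles. Alternatively, and perhaps more cleanly, one can use that the lattice is join-extremal (\cref{propspinechainsTam}) together with the fact that a join-congruence uniform lattice is the same as a lattice with no $D$-cycles, and try to read off a topological ordering directly from the numbering $\mathcal{L}$ fixed after \cref{propgamma}.

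Concretely, I would first analyze \cref{propDrelation} to see which way the component index moves. In the ``different components'' case, $(x,j)\,D\,(y,i)$ forces $i<j$, so along such arrows the second coordinate strictly drops. In the ``same component'' case, $(x,i)\,D\,(y,i)$ with $y=\phi(k)$, $k\le i$, and $y\not\le x$: here the component is unchanged, so I need a secondary statistic within $\mathcal{C}_i$ that strictly decreases. The candidate is something like $\delta$ restricted to the component, or the ``height'' of $(x,i)$ in $I_P(\phi(i))$: since $y=\phi(k)\not\le x$ and $\phi$ is a chain with $\phi(k)\le\phi(i)$, the element $x$ cannot be too high up, and the target $(\phi(k),i)$ sits on the chain $a_{0,i}>a_{1,i}>\cdots$. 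One checks that $\min\{m \mid x \le \phi(m)\}$ strictly exceeds $\min\{m \mid \phi(k)\le \phi(m)\} = k$ — indeed $\phi(k)\not\le x$ combined with $x\le\phi(\ell)$ for $\ell = \delta(x,i)$ forces $\ell > k$ since $\phi$ is totally ordered — so $\delta$ strictly decreases along same-component $D$-arrows while the component is fixed. Combining: the pair $(\,i\,,\,\delta(x,i)\,)$, ordered lexicographically with $i$ dominant, strictly decreases along every $D$-arrow, except I must double-check the mixed case does not increase $\delta$ in a way that breaks the lexicographic argument — but since $i$ already strictly drops there, the second coordinate is irrelevant. Hence there are no $D$-cycles.

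The main obstacle I anticipate is pinning down the right within-component statistic and verifying the inequality in the same-component case rigorously from \cref{propDrelation}: one must be careful that $\delta(x,i) = \min\{m\mid x\le\phi(m)\}$ is well-defined for every $(x,i)\in\mathcal{C}_i$ (it is, since $x\le\phi(i)$ by definition of the component, so the set is nonempty and bounded by $i$) and that $\phi(k)\not\le x$ genuinely forces $\delta(x,i)>k$. A secondary subtlety is that \cref{propDrelation} is stated for join-irreducibles identified with elements of $\mathcal{C}_P^{\phi}$, so one should make sure the lexicographic order I define on $\mathcal{C}_P^{\phi}$ is compatible with — or at least refines appropriately — and does not conflict with the numbering $\mathcal{L}$; but since we only need existence of \emph{some} strictly decreasing function, any valid choice closes the argument, and \cref{propDayLW} then yields that $\Tam(P,\phi)$ is join-congruence uniform.
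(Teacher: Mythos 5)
Your overall strategy is the intended one: by \cref{propDayLW} it suffices to show the $D$-relation of \cref{propDrelation} has no cycles, and a lexicographic statistic with the component index dominant is the right shape of argument, since in the cross-component case $(x,j)\,D\,(y,i)$ forces $i<j$. However, your secondary statistic for the same-component case is wrong, and the inequality you assert for it is based on a logical slip. You claim that $(x,i)\,D\,(\phi(k),i)$ with $\phi(k)\not\leq x$ forces $\delta(x,i)=\min\{m\mid x\leq\phi(m)\}>k$, arguing that ``$\phi(k)\not\leq x$ combined with $x\leq\phi(\ell)$ forces $\ell>k$.'' It does not: if $\ell\leq k$ then $x\leq\phi(\ell)\leq\phi(k)$, which is perfectly compatible with $\phi(k)\not\leq x$ (you have confused $y\not\leq x$ with $x\not\leq y$). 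Concretely, in the example of \cref{fig:posetfirstExample} the paper records the same-component relation $4\,D\,1$, i.e.\ $(\phi(0),2)\,D\,(\phi(2),2)$; here $\delta(\phi(0),2)=0$ while $\delta(\phi(2),2)=2$, so your statistic strictly \emph{increases} along this arrow and your lexicographic function fails to be decreasing.

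The gap is repairable with the dual statistic. Note first that the target of any same-component arrow is necessarily a chain element $(\phi(k),i)$. Set $\beta(x):=\max\{m\mid\phi(m)\leq x\}$, which is well defined since $\phi(0)=\hat 0\leq x$. If $(x,i)\,D\,(\phi(k),i)$ then $\phi(k)\not\leq x$, and since $\{m\mid\phi(m)\leq x\}$ is an initial segment of indices ($\phi$ being a chain), this gives $\beta(x)<k\leq\beta(\phi(k))$. So $\beta$ strictly increases along same-component arrows while the component index is fixed, and the pair $\bigl(i,-\beta(x)\bigr)$, ordered lexicographically with $i$ dominant, strictly decreases along every $D$-arrow. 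This rules out $D$-cycles and, with \cref{propDayLW}, completes the proof. With this correction your argument coincides with the paper's (the extended abstract derives \cref{thmLW} from exactly \cref{propDrelation} and \cref{propDayLW}).
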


\begin{defi}
Let $K:=(K_0,K_1,\dots,K_{n-1})$ be a non-decreasing sequence of non-negative integers such that $K_i\leq i$ for all $i< n$. Denote $R_K :=\{(x,i)\in \mathcal{C}_P^{\phi}\mid x\geq \phi(K_i)\}$. On $\Tam(P,\phi)$ we define the \defn{Kupisch equivalence relation} $F\equiv_K F'$ if and only if $F\cap R_K = F'\cap R_K$.
\end{defi}

Using \cref{propDayLW,propDrelation} we obtain

\begin{prop}
\label{propKupisch}
A Kupisch equivalence relation is a lattice congruence of $\Tam(P,\phi)$.
\end{prop}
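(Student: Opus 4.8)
The plan is to use \cref{propDayLW}, which reduces the claim to checking a purely combinatorial closure condition: the set $T_K \subseteq \mathrm{JIrr}(\Tam(P,\phi))$ of join-irreducibles contracted by $\equiv_K$ is a down-set for the join dependency relation $D$, i.e. if $a\,D\,b$ and $b\in T_K$ then $a\in T_K$. Since by \cref{thmLW} the lattice $\Tam(P,\phi)$ is join-congruence uniform (hence has no $D$-cycles and its congruences are exactly these $D$-downsets), it suffices to (i) identify $T_K$ explicitly, and (ii) verify the closure condition using the description of $D$ in \cref{propDrelation}.

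First I would compute $T_K$. Recall $\equiv_K$ identifies $F$ and $F'$ exactly when $F \cap R_K = F' \cap R_K$. A join-irreducible, by \cref{joinirrprop}, is the principal order filter $\langle (x,k)\rangle$ generated by some $(x,k)\in\mathcal{C}_P^\phi$, and its unique lower cover is obtained by deleting the generator (or the appropriate maximal element, following \cref{lemCovers}); concretely $\langle(x,k)\rangle_* = \langle(x,k)\rangle \setminus \{(x,k)\}$. This join-irreducible is contracted precisely when $(x,k)\notin R_K$, that is when $x \not\geq \phi(K_k)$. So $T_K = \{\,(x,k)\in\mathcal{C}_P^\phi \;:\; x\not\geq \phi(K_k)\,\}$ (identifying an element with the join-irreducible it generates). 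I should double-check this by also confirming that $R_K$ is itself a torclosed subset — this uses that $K$ is non-decreasing with $K_i \le i$, so that the torclosed condition "$(x,i)\in R_K$ and $(\phi(i+1),j)\in R_K$ imply $(x,j)\in R_K$" holds: from $(\phi(i+1),j)\in R_K$ we get $\phi(i+1)\ge\phi(K_j)$, hence $K_j \le i+1$; combined with monotonicity one checks $x\ge\phi(K_i)\ge\phi(K_j)$ when $K_j\le i$, and the boundary case $K_j=i+1$ needs the extension hypothesis more carefully. Having $R_K$ torclosed is what makes $\cdot \cap R_K$ a well-behaved map and is implicitly needed for $\equiv_K$ to even be an equivalence of the right shape.

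Next, with $T_K$ identified, I would run through the two cases of \cref{propDrelation}. Same-component case: suppose $(x,i)\,D\,(y,i)$, so there is $k\le i$ with $y=\phi(k)$ and $\phi(k)\not\le x$, and assume $(y,i)=(\phi(k),i)\in T_K$, i.e. $\phi(k)\not\ge\phi(K_i)$; since $\phi$ is a chain this forces $k < K_i$, whence $\phi(K_i) > \phi(k)$, and $x\not\ge\phi(k)$ gives $x\not\ge\phi(K_i)$, so $(x,i)\in T_K$. Cross-component case: suppose $(x,j)\,D\,(y,i)$ with $i<j$, $y\le x$, $\phi(i+1)\not\le x$, and no $y'\le\phi(i)$ with $y<y'\le x$; assume $(y,i)\in T_K$, i.e. $y\not\ge\phi(K_i)$. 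I want $(x,j)\in T_K$, i.e. $x\not\ge\phi(K_j)$. Here $K_j \le j$ and monotonicity gives $K_j\ge K_i$; the condition $\phi(i+1)\not\le x$ together with $K_j\le$ something should pin down that $\phi(K_j)\le\phi(i+1)$ is impossible unless... — the delicate point is relating $K_j$ to $i+1$. If $K_j \le i+1$ were to fail we'd have $K_j \ge i+2$, but then... actually I expect the key sub-argument is: $\phi(i+1)\not\le x$ forces $K_j \le i+1$ is the only way $x$ could be $\ge\phi(K_j)$, and then the "no intermediate $y'$" condition forces $x\ge\phi(i)$ would contradict $y\not\ge\phi(K_i)$ when combined appropriately. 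This cross-component verification — threading the non-existence-of-$y'$ hypothesis together with the monotonicity of $K$ to rule out $x\ge\phi(K_j)$ — is the part I expect to be the main obstacle; the same-component case is essentially immediate from $\phi$ being a chain, whereas the cross-component case genuinely uses all four clauses of the second half of \cref{propDrelation} in concert with the structure of $K$.

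Finally, once the $D$-downset condition is verified, \cref{propDayLW} immediately yields that the down-set $T_K$ is a congruence, and since $\equiv_K$ is visibly the equivalence relation whose contracted join-irreducibles are exactly $T_K$ (two torclosed sets with the same trace on $R_K$ agree on whether they contain each $(x,k)\notin R_K$, after passing through covers), we conclude $\equiv_K$ is a lattice congruence of $\Tam(P,\phi)$. I would present the argument in the order: describe $T_K$; note $R_K$ torclosed; handle the same-component $D$-closure; handle the cross-component $D$-closure; invoke \cref{propDayLW}.
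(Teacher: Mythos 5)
Your overall strategy---identify the set $T_K=\{(x,k)\in\mathcal{C}_P^{\phi}: x\not\geq\phi(K_k)\}$ of contracted join-irreducibles and check that it is a down-set for $D$, then invoke \cref{propDayLW}---is exactly the route the paper indicates, your identification of $T_K$ is correct (any order filter of a single component is automatically torclosed, so the principal filter generated by $(x,k)$ covers precisely that filter minus $\{(x,k)\}$), and your same-component check goes through. The problem is that the cross-component case, which you explicitly defer as ``the main obstacle,'' is the actual content of the proposition, and your sketch does not close it. Run it out: assume $x\geq\phi(K_j)$; then $\phi(i+1)\not\leq x$ forces $K_j\leq i$, so $y':=\phi(K_j)$ satisfies $y'\leq\phi(i)$ and $y'\leq x$, and the ``no intermediate $y'$'' clause only yields $y\not<\phi(K_j)$. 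Since $(y,i)\in T_K$ already rules out $y\geq\phi(K_j)$ (because $\phi(K_i)\leq\phi(K_j)$), you are left with the case where $y$ is \emph{incomparable} to $\phi(K_j)$, and nothing in your sketch excludes it. Be careful, too, about relying on a literal reading of \cref{propDrelation} for an exhaustive case analysis: in the paper's own example the listed relation $2\,D\,5$, i.e.\ $(b,2)\,D\,(a,1)$ with $a,b$ the two incomparable middle elements of $P$, does not satisfy the displayed clause $y\leq x$, so the pairs you would check from that criterion alone may not cover all of $D$.

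There is a second, logical, gap at the end. Even granting that $T_K$ is a $D$-down-set, \cref{propDayLW} only produces \emph{some} congruence $\Theta$ whose set of contracted join-irreducibles is $T_K$; it does not say that the equivalence relation $\equiv_K$ equals $\Theta$. An arbitrary equivalence relation is not determined by the join-irreducibles it contracts---only congruences are, which is what you are trying to prove---so the closing ``visibly'' is circular. You need a separate identification of $\equiv_K$ with $\Theta$, e.g.\ by showing the $\equiv_K$-classes are intervals whose minimum and maximum maps are order-preserving, or by comparing cover by cover which edges each relation contracts (for $\equiv_K$ a cover $b\lessdot c$ is contracted iff $(c\setminus b)\cap R_K=\emptyset$). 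Finally, your side worry about $R_K$ being torclosed is resolved negatively: in the boundary case $K_j=i+1$ one has $(x,i)\in R_K$ and $(\phi(i+1),j)\in R_K$ but $(x,j)\notin R_K$ for every $x\leq\phi(i)$, so $R_K$ need not be torclosed; fortunately this is not needed for the description of $T_K$.
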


\section{Main examples}
\label{sectionMainExamples}

\subsection{The case of the chains}
\label{subsectionChain}

We assume that $P=C_n$ is a chain and $\phi$ is a chain of $C_n$. In this case, we also consider chains $\phi$ of $P$ such that $\phi(0)\neq 0$ and we can prove that we still get a lattice $\Tam(C_n,\phi)$.  
The components $\mathcal{C}_k$ for $k<n$ are themselves chains, and we identify a torclosed subset $F$ as the word on non-negative integers $u=u_1u_2\dots u_n$ where for all $i\in [n]$, $u_i:=|\mathcal{C}_{i-1} \cap F|$.  

\begin{lem}
\label{lemconditionchaintorclosed}
The word $u$ corresponds to a torclosed subset if and only if for all $i\in [n]$, $u_i\leq \phi(i-1)+1$, and for all $i\in [n]$ and $k\in [n-i]$, we have that $u_{i+k} > \phi(i+k) - \phi(i+1)$ together with $u_i\neq 0$ implies $u_{i+k} \geq u_i +\phi(i+k)-\phi(i)$.
\end{lem}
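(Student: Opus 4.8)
The plan is to unwind both defining conditions of \emph{torclosed} (\cref{defConstruction}) directly into the word encoding $u = u_1 u_2 \cdots u_n$, using that each component $\mathcal{C}_k = I_P(\phi(k))\times\{k\}$ is now a chain of size $\phi(k)+1$ (the elements being $(0,k), (1,k), \dots, (\phi(k),k)$), so a subset of $\mathcal{C}_k$ that is "upward closed within the component" is determined by its cardinality $u_{k+1}$, and it equals $\{(y,k)\mid y\geq \phi(k)+1-u_{k+1}\}$. First I would record this dictionary: an order filter $F$ of $\mathcal{C}_P^{\phi}$ restricted to $\mathcal{C}_k$ must be an up-set of the chain $\mathcal{C}_k$, which is automatic from the word description, but being an order \emph{filter} of the whole poset $\mathcal{C}_P^{\phi}$ also requires compatibility between components coming from the product-like relations $(x,i)\leq(y,j)$ present in $\mathcal{C}_P^{\phi}$ — I need to be careful here about exactly which relations hold, since the partial order on $\mathcal{C}_P^{\phi}$ is the one making it $\bigsqcup \mathcal{C}_k$ with the $a_{i,j}$ identified appropriately, not simply $\leq_{prod}$. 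I expect that the "order filter" part translates precisely into the first bullet $u_i \le \phi(i-1)+1$ together with the monotonicity hidden in the second condition, so I would first pin down the covering/order relations of $\mathcal{C}_P^{\phi}$ in the chain case explicitly.

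Next I would translate the torsion-closure axiom. Recall it says: for $i<j<n$, if $(x,i)\in F$ and $(\phi(i+1),j)\in F$ then $(x,j)\in F$. In the chain encoding, $(x,i)\in F$ for the "smallest such $x$" corresponds to $x = \phi(i)+1-u_{i+1}$ when $u_{i+1}\neq 0$ (reindex carefully: $u_i = |\mathcal{C}_{i-1}\cap F|$, so it is component $i-1$). The hypothesis $(\phi(i+1),j)\in F$, with $\mathcal{C}_{j-1}$ of size $\phi(j-1)+1$, becomes $u_{j} \ge \phi(j-1) - \phi(i+1) + 1$, i.e. the strict inequality $u_{i+k}>\phi(i+k)-\phi(i+1)$ after the shift $j = i+k$ and reindexing components consistently with the statement. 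The conclusion $(x,j)\in F$ for $x = \phi(i)+1-u_{i+1}$ becomes $u_{j} \ge \phi(j-1) - x + 1 = \phi(j-1) - \phi(i) + u_{i+1}$, which is exactly $u_{i+k}\ge u_i + \phi(i+k)-\phi(i)$ after matching the index conventions. I would then check the converse: that imposing the two word-inequalities for all valid $i,k$ forces $F$ to satisfy both axioms for all pairs, not just the "extremal" witness $x$; this follows because once the condition holds for the smallest $x$ in $\mathcal{C}_i\cap F$ it holds for all larger $x$ there (monotonicity of the linear inequalities in $x$), and the order-filter condition handles the degenerate cases $u_i = 0$.

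The main obstacle I anticipate is bookkeeping: getting the index shifts exactly right (components are $\mathcal{C}_0,\dots,\mathcal{C}_{n-1}$ but the word is indexed $u_1,\dots,u_n$; the chain $\phi$ is indexed from $0$; and the statement mixes $\phi(i-1)$, $\phi(i+1)$, $\phi(i+k)$, $\phi(i)$), and making sure the boundary/degenerate cases are covered — namely when $u_i = 0$ (hypothesis vacuous), when $\phi(i+1)$ is undefined because $i+1 = n$ (not an issue since we need $j<n$, i.e. $i+k\le n-1$, matching $k\in[n-i]$ shifted appropriately), and when the strict inequality $u_{i+k}>\phi(i+k)-\phi(i+1)$ is automatically false so the implication is vacuous. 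I would also double-check that the first condition $u_i\le\phi(i-1)+1$ is not already implied by the second (it is not — it is the raw "subset of $\mathcal{C}_{i-1}$" constraint), so both clauses are genuinely needed. Once the dictionary is set up cleanly, the proof is a direct verification in both directions; the content is entirely in the correct translation, so I would devote most of the writeup to stating the dictionary precisely and then the equivalences fall out line by line.
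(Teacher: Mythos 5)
Your overall strategy is the right one (and, as far as one can tell from this extended abstract, the only sensible one, since the paper omits the proof): each component $\mathcal{C}_k=I_P(\phi(k))\times\{k\}$ is a chain of size $\phi(k)+1$, so an order filter of $\mathcal{C}_P^{\phi}$ restricted to $\mathcal{C}_k$ is determined by its cardinality, and the torclosed implication for a fixed pair of components only needs to be checked at the minimal element of $F\cap\mathcal{C}_i$. Two clarifications on your setup. First, $\mathcal{C}_P^{\phi}$ is defined as a \emph{disjoint union} $\bigsqcup_k \mathcal{C}_k$; there are no cross-component order relations and no identifications of the $a_{i,j}$, so the order-filter condition is purely component-wise and translates exactly into $0\le u_i\le\phi(i-1)+1$ together with ``$F\cap\mathcal{C}_{i-1}$ is the top $u_i$ elements.'' The product order $\leq_{prod}$ plays no role here. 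Your hedge about ``which relations hold'' resolves trivially in your favour, but you should state it rather than leave it open, since the whole dictionary (word $\leftrightarrow$ torclosed set) depends on it.

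Second, and more seriously: you assert that your derived inequalities become the lemma's ``after matching the index conventions,'' but they do not, and you never carry out the match. Running your own computation carefully (component indices $0\le i<j\le n-1$, word indices $I=i+1$, $J=j+1=I+k$), the hypothesis $(\phi(i+1),j)\in F$ reads $u_{I+k}>\phi(I+k-1)-\phi(I)$ and the conclusion reads $u_{I+k}\ge u_I+\phi(I+k-1)-\phi(I-1)$, whereas the lemma displays $u_{i+k}>\phi(i+k)-\phi(i+1)$ and $u_{i+k}\ge u_i+\phi(i+k)-\phi(i)$: every argument of $\phi$ is shifted up by one. The two versions coincide exactly when $\phi$ has constant consecutive differences --- which covers the cases the paper actually uses ($P=C_{np}$ with $\phi(i)=(i+1)p-1$, hence Pallo's description and \cref{propTamari}) --- but not for a general chain $\phi$; note also that the lemma's $\phi(i+k)$ with $i+k=n$ and $\phi(i+1)$ with $i=n-1$ fall outside the index range $\{0,\dots,n-1\}$, which strongly suggests the displayed statement has an off-by-one slip and your derived form is the correct one. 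A complete write-up must either prove the corrected inequalities and say so, or exhibit the exact change of variables reconciling the two; waving at ``index conventions'' leaves the one genuinely delicate step of the proof undone.
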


Let $p$ be a positive integer. For simplicity, we assume that $P=C_{np}$ and $\phi(i)=(i+1)\,p -1$ for all $i<n$.
For all $i\in [n]$, draw a segment from $(i-1,0)$ to $(i-1,u_i)$ in the Cartesian plane. Then \cref{lemconditionchaintorclosed} says that one can draw lines of slope $p$ passing through the $x$-axis and the top of each segment without crossing any segment.
For $p=1$ we recover a well-known description of the Tamari lattice due to Pallo.

\begin{prop} [\cite{10.1093/comjnl/29.2.171}]
\label{propTamari}
   $\Tam(C_n,C_n)=\Tam_{n+1}$.
\end{prop}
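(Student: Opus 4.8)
The plan is to show that the combinatorial model of $\Tam(C_n, C_n)$ described just above (words $u = u_1 \dots u_n$ of non-negative integers satisfying the conditions of \cref{lemconditionchaintorclosed} for $p = 1$) coincides, via an explicit bijection, with Pallo's bracketing-vector (or left-weight) encoding of $\Tam_{n+1}$, and that the inclusion order on torclosed subsets matches the componentwise order on bracketing vectors. First I would specialize \cref{lemconditionchaintorclosed} to $P = C_n$, $\phi = C_n$, i.e. $\phi(i-1) = i-1$; then the first condition reads $u_i \le i$, and the second condition, since $\phi(i+k) - \phi(i+1) = k-1$ and $\phi(i+k)-\phi(i) = k$, becomes: if $u_{i+k} \ge k$ and $u_i \ne 0$ then $u_{i+k} \ge u_i + k$. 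This is exactly the classical characterization of a vector lying in Pallo's poset of bracketing vectors for $\Tam_{n+1}$ (after a standard index shift), so the torclosed subsets are in bijection with the elements of $\Tam_{n+1}$.

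Next I would check that the order is preserved. Since by \cref{defConstruction} the order on $\Tam(P,\phi)$ is inclusion of torclosed subsets, and inclusion of torclosed subsets of $\mathcal{C}_P^\phi$ when every component is a chain is exactly the componentwise order $u \le v \iff u_i \le v_i$ for all $i$, it suffices to recall that Pallo's encoding is an order isomorphism onto $\Tam_{n+1}$ with the componentwise order on bracketing vectors. Alternatively, and perhaps more self-containedly, I would argue by induction on $n$: \cref{propcompareTamari} already embeds $\Tam_{n+1}$ as a sublattice of $\Tam(C_n, C_n)$ via the order filters generated by the $a_{i,j}$; I would then verify that in the chain case this sublattice is in fact all of $\Tam(C_n,C_n)$, because when $P$ is a chain every torclosed subset is already an order filter generated by elements of the form $a_{i,j} = (\phi(j-i), j)$ — the components are chains, so a filter in a component is determined by its minimal element, and the torclosed condition forces these minimal elements to be among the $a_{i,j}$ (up to the shift in indices). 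This makes the sublattice of \cref{propcompareTamari} exhaust the whole lattice, giving $\Tam(C_n,C_n) \cong \Tam_{n+1}$ directly.

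The main obstacle I expect is bookkeeping the index conventions: the paper uses $\phi: \phi(0) < \dots < \phi(n-1)$ with $n$ components indexed $0, \dots, n-1$, while Pallo's Tamari lattice $\Tam_{n+1}$ on binary trees with $n+1$ leaves uses bracketing vectors of length $n$; matching "$u_i \le i$, and $u_{i+k}\ge k$ with $u_i \ne 0$ forces $u_{i+k} \ge u_i + k$" to the literature's exact inequalities requires a careful but routine reindexing, and one must double-check the edge case $u_i = 0$ (which in the tree picture corresponds to a leaf being "free") is handled consistently. A secondary point to be careful about is confirming that cover relations also match, i.e. that the covers identified in \cref{lemCovers} become exactly the Tamari rotations under the bijection; this follows once the poset isomorphism is established, since a finite poset's cover relations are determined by its order, so it need not be checked separately. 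I would close by noting this recovers the well-known slope-$1$ lattice-path description and citing \cite{10.1093/comjnl/29.2.171} for the original statement.
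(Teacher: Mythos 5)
Your primary route — specialize \cref{lemconditionchaintorclosed} to $\phi(i)=i$, $p=1$, observe that the resulting conditions on the word $u$ (with inclusion of torclosed sets becoming the componentwise order, since each component is a chain) are exactly Pallo's bracketing-vector description of $\Tam_{n+1}$ — is precisely the argument the paper intends, as it states this specialization immediately before citing Pallo for the proposition. Your alternative via \cref{propcompareTamari} (in the chain case every element of $\mathcal{C}_P^{\phi}$ is some $a_{i,j}$, so that sublattice is the whole lattice) is also valid and self-contained within the paper.
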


 Combe and Giraudo  \cite{Combe_2022} also obtained similar, but different, generalizations of the Tamari lattice called $\delta$-canyon lattices.
The tools that they have developed can be used in our model. We obtain

\begin{prop}
  $\Tam(C_n,\phi)$ is a congruence uniform left modular lattice.  
\end{prop}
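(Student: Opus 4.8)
The plan is to prove that $\Tam(C_n,\phi)$ is congruence uniform and left modular by leveraging the general machinery already developed in the paper, specializing it to the chain case. First I would observe that left modularity is essentially immediate: \cref{thmleftmodular} gives left modularity for all $(P,\phi)$-Tamari lattices with $P$ having a minimum and $\phi(0)=\hat 0$, and the mild extension to chains $\phi$ with $\phi(0)\neq 0$ announced at the start of \cref{subsectionChain} proceeds the same way, since the proofs of \cref{lemCovers,joinirrprop,propspinechainsTam} and hence of \cref{propgamma,thmleftmodular} only use the chain structure of the components $\mathcal{C}_k$, which here are genuine chains. So the substantive content is congruence uniformity, i.e.\ upgrading the join-congruence uniformity of \cref{thmLW} to full congruence uniformity; equivalently, by \cref{propDayLW} and its meet-dual, showing there are no $D$-cycles \emph{and} no $D^{op}$-cycles (the latter being what distinguishes congruence uniform from merely join-congruence uniform).

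The key steps, in order: (1) record that $\Tam(C_n,\phi)$ is join-congruence uniform by \cref{thmLW}, so it has no $D$-cycles. (2) Use the explicit description of the $D$ relation from \cref{propDrelation}, which in the chain case simplifies considerably: since each $I_P(\phi(i))$ is a chain $0<1<\cdots<\phi(i)$, the condition ``there is no $y'\leq \phi(i)$ with $y<y'\leq x$'' becomes the statement that $y=x$ when $x\leq\phi(i)$, or $y=\phi(i)$ when $x>\phi(i)$; this makes $D$ very rigid. (3) Analyze $D$-chains: I expect that every $D$-relation $p\,D\,q$ either strictly decreases a natural statistic (such as the component index $k$, or within a fixed component the value $x$), or is of a controlled ``same-component'' type, so that no cycle can close up. (4) Dually, for congruence uniformity one needs the meet-side condition; here I would invoke the known fact (or reprove by the same analysis applied to the order-dual, noting $\Tam(C_n,\phi)$ is meet-semidistributive iff it is semidistributive, and by \cref{propJSD} the chain case with $\phi$ a full chain satisfies the comparability hypothesis) that for a lattice, being congruence uniform is equivalent to being join-congruence uniform together with the absence of $D$-cycles on meet-irreducibles, and check the latter using the meet-irreducible description in \cref{joinirrprop}. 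Alternatively, and probably more cleanly, I would use the doubling characterization: show by the inductive structure (each $\Tam(C_n,\phi)$ is built from smaller ones by doubling intervals) that only interval-doublings occur, which by definition gives congruence uniformity.

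The cleanest route, and the one I would actually carry out, is the doubling route hinted at by the remark that ``the tools developed by Combe and Giraudo can be used in our model'': exhibit $\Tam(C_n,\phi)$ explicitly as an iterated doubling of intervals of the one-element lattice, by peeling off one element of $\mathcal{C}_P^\phi$ at a time along the chosen linear extension $\mathcal{L}$ of $(\mathcal{C}_P^\phi,\leq_{prod})$. Concretely, for the largest element $x_i$ in $\mathcal{L}$, the torclosed subsets not containing $x_i$ form a sublattice isomorphic to a smaller $(P',\phi')$-Tamari lattice, and the torclosed subsets containing $x_i$ correspond, after deletion, to an order filter (an interval, once one checks it has a unique minimum using the chain structure and \cref{lemconditionchaintorclosed}) of that smaller lattice; this realizes $\Tam(C_n,\phi)$ as a doubling of an interval. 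Iterating down to the empty poset gives congruence uniformity, and combined with \cref{corocongruenceuniformExtremal} (for congruence uniform lattices, extremality and left modularity are equivalent) also re-derives left modularity for free.

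The main obstacle I anticipate is verifying that the set being doubled at each step is genuinely an \emph{interval} and not merely a lower pseudo-interval or a general convex set: this is exactly the gap between join-congruence uniform and congruence uniform, and in general $(P,\phi)$-Tamari lattices it fails (consistent with the lattice in \cref{fig:firstExample} being non-semidistributive, hence non-congruence-uniform). So the heart of the argument is a combinatorial lemma, special to the chain case, showing that when $P=C_n$ the torclosed subsets containing a fixed ``top'' generator, once that generator is removed, have a unique maximal element as well as a unique minimal element inside the ambient smaller lattice — in other words that the relevant convex set is an interval. I would prove this directly from the inequality description in \cref{lemconditionchaintorclosed}, showing that the componentwise maximum of two such torclosed subsets is again torclosed (which uses that the components are totally ordered in an essential way, and is precisely where the general-poset version breaks down).
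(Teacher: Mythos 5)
Your reduction of the problem is right: left modularity is essentially covered by \cref{thmleftmodular} (or, once congruence uniformity is known, by \cref{corocongruenceuniformExtremal} together with the observation that in the chain case \cref{propJSD} gives semidistributivity, hence $|\mathrm{JIrr}|=|\mathrm{MIrr}|$ and, with \cref{propspinechainsTam}, extremality), and the real content is upgrading join-congruence uniformity to congruence uniformity, i.e.\ showing the doubled convex sets are genuine intervals. You also correctly flag that this is exactly where general $(P,\phi)$-Tamari lattices fail. The paper itself does not print a proof here (it defers to the interval-doubling machinery of Combe--Giraudo), so your doubling route is the intended one in spirit.

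However, the concrete decomposition you propose to carry out is wrong, and the error is visible already in the smallest example $\Tam(C_2,C_2)=\Tam_3=N_5$. A doubling sequence must peel off join-irreducibles in an order compatible with the join-dependency relation $D$ of \cref{propDrelation}: by \cref{propDayLW}, the join-irreducible split off at the last step must be a $D$-source, i.e.\ an element $j$ with no $j'\,D\,j$, since otherwise $\{j\}$ is not a congruence. You instead peel along the linear extension $\mathcal{L}$ of $\leq_{prod}$ from the top. For $\Tam(C_2,C_2)$ the $\leq_{prod}$-maximum is $(1,1)$, but $(0,1)\,D\,(1,1)$ (take $k=i=1$ in \cref{propDrelation}), so no congruence contracts only the join-irreducible generated by $(1,1)$, and indeed the numerics of your proposed bijection fail: three torclosed subsets contain $(1,1)$ while the sublattice of those avoiding it has only two elements, so the former cannot be identified with a convex subset of the latter and $N_5$ is not a doubling of that $2$-chain. (The correct first peel is $(0,1)$, the unique $D$-source, and the quotient is $B_2$, which is not of the form $\Tam(C_m,\psi)$ --- so the induction also cannot stay inside your family of lattices as stated.) To repair the argument you must either run the doubling induction in the $D$-compatible order and verify, using \cref{propDrelation} and \cref{lemconditionchaintorclosed}, that each congruence class being doubled is an interval, or take your route A seriously and check directly that the dual lattice has no $D$-cycles; as written, neither is done, so the congruence uniformity claim is not yet established.
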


\subsection{$d$-torsion classes of the higher Auslander algebras of type $\mathbb{A}$}
\label{sectionAuslander}

Let $d$ and $n$ be positive integers. Let $os_{n}^{d}$ be the set of non-decreasing $d$-sequences of elements of $\{0,1,\dots,n-1\}$. Let $\leq_{prod}$ be the product order on $os_{n}^{d}$.
The $d$-torsion classes of the higher Auslander algebras of type $\mathbb{A}$ are identified with the subsets $I  \subseteq os_n^{d+1}$ that satisfy conditions $(1)$ and $(2)$ of Theorem $5.13$ of \cite{AHJKPT23}.
Denote $L_n^d$ the lattice of these $d$-torsion classes ordered by inclusion. Using elementary techniques, we obtain

\begin{lem}
\label{newconditions}
Conditions $(1)$ and $(2)$ of Theorem $5.13$ of \cite{AHJKPT23} are equivalent to conditions $(1)$ and 
    \begin{itemize}
        \item[$(2') :$] $\forall j>i,\, \big(\,(x_1,x_2,\dots,x_d,i),(i+1,\dots,i+1,j)\,\big)\in I^2$ implies  $(x_1,x_2,\dots,x_d,j)\in I$.
    \end{itemize}
\end{lem}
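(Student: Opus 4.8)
The plan is to compare Theorem 5.13's conditions $(1)$ and $(2)$ of \cite{AHJKPT23} directly with $(1)$ and $(2')$, keeping condition $(1)$ fixed in both and showing that, in the presence of $(1)$, condition $(2)$ is equivalent to $(2')$. First I would recall explicitly what conditions $(1)$ and $(2)$ say for a subset $I\subseteq os_n^{d+1}$: condition $(1)$ is the ``order filter'' condition (closure under the relevant componentwise operation / closure under $d$-quotients), and condition $(2)$ is the ``closure under $d$-extensions'' condition, which is phrased in \cite{AHJKPT23} via a family of admissible sequences rather than the single clean implication in $(2')$. The whole content of the lemma is that $(2')$ is the ``economical'' form of $(2)$: it suffices to test the extension-closure condition on the special pairs whose second coordinate sequence is $(i+1,\dots,i+1,j)$, because condition $(1)$ lets one propagate the conclusion to all the other required cases.

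**The forward direction: $(1)\wedge(2)\Rightarrow(2')$.** This is the easy half. I would check that each pair $\big((x_1,\dots,x_d,i),(i+1,\dots,i+1,j)\big)$ with $j>i$ appearing in $(2')$ is a special instance of the configurations controlled by condition $(2)$, so that the membership $(x_1,\dots,x_d,j)\in I$ is forced directly. Concretely this amounts to matching the pattern $(i+1,\dots,i+1,j)$ against the admissibility requirement in Theorem 5.13 and observing it is admissible, then reading off the conclusion of $(2)$.

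**The reverse direction: $(1)\wedge(2')\Rightarrow(2)$.** This is where the work is, and I expect it to be the main obstacle. Given an arbitrary pair (or tuple) that condition $(2)$ requires to force some element of $os_n^{d+1}$ into $I$, I would argue by induction — most naturally on the ``distance'' $j-i$ between the last coordinates involved, or equivalently on how far the second sequence of the pair is from the canonical shape $(i+1,\dots,i+1,j)$. In the base case the pair is already of the form handled by $(2')$. For the inductive step, I would use condition $(1)$ (the filter/quotient-closure condition) to replace the given pair by one ``closer'' to canonical shape: concretely, $(1)$ lets me move an intermediate element of the second sequence down to $i+1$ while keeping everything in $I$, and then apply the inductive hypothesis, and finally use $(1)$ again to recover the originally desired element. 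The bookkeeping — verifying that each such replacement stays within $os_n^{d+1}$, keeps the non-decreasing condition, and that condition $(1)$ genuinely applies at each step — is the routine-but-delicate part; I would organize it so that exactly one application of $(1)$ is used before and after a single invocation of $(2')$ per inductive step. Once both implications are established, combining them with the fact that $(1)$ is common to both descriptions yields the stated equivalence of $\{(1),(2)\}$ with $\{(1),(2')\}$.
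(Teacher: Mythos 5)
First, a point of comparison: the paper does not actually include a proof of \cref{newconditions} --- it is introduced with ``Using elementary techniques, we obtain'' and no argument is written out in this extended abstract. So your outline can only be judged on its own terms. Your overall strategy --- keep condition $(1)$ fixed and show that, in its presence, $(2)$ and $(2')$ imply one another, one direction by specialisation and the other by reducing an arbitrary admissible pair to the canonical pair whose second entry is $(i+1,\dots,i+1,j)$ --- is surely the intended ``elementary'' argument.

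As written, however, there is a genuine gap, concentrated exactly where you locate ``the work''. You never state condition $(2)$ of Theorem $5.13$, so neither implication is actually verified, and the one concrete mechanism you describe for the harder direction is incompatible with condition $(1)$. Condition $(1)$ says that $I$ meets each slice $\{x\mid x_{d+1}=i\}$ in an \emph{order filter} for the product order (Remark $5.14$ of \cite{AHJKPT23}, as quoted in the proof of \cref{thmprincaus}): it only lets you replace an element of $I$ by a coordinatewise \emph{larger} one in the same slice. Your plan to use $(1)$ to ``move an intermediate element of the second sequence down to $i+1$ while keeping everything in $I$'' therefore invokes $(1)$ in the direction in which it says nothing. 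Whether the canonical sequence $(i+1,\dots,i+1,j)$ sits above or below the second sequences admissible in $(2)$ is determined by the precise interlacing inequalities of Theorem $5.13$ --- exactly the data your write-up omits --- and until that is pinned down neither the direction of the replacement nor the proposed induction can be checked. (Note also that $(2')$ already treats arbitrary $j>i$ in a single step, so an induction on $j-i$ across intermediate slices is not the natural organisation; the reduction should take place within the two fixed slices $i$ and $j$.) To turn this into a proof you must write out condition $(2)$, verify how its admissible second sequences compare to $(i+1,\dots,i+1,j)$ in the product order, and only then apply $(1)$ --- at which point the argument should collapse to one application of $(1)$ and one of $(2)$ or $(2')$ in each direction, with no induction needed.
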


\begin{thm}
\label{thmprincaus}
$L_n^d=\Tam\big((os_n^d,\leq_{prod}),\phi\big)$ with $\phi$ defined by $\phi(k)=(k,k,\dots,k)$ for all $k<n$.
\end{thm}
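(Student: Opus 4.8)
The plan is to prove \cref{thmprincaus} by exhibiting an explicit bijection between the two lattices that is an order isomorphism, and the natural candidate is literally the identity map on the underlying sets, once we match up the ambient posets correctly. First I would set $P := (os_n^d, \leq_{prod})$ and check that $\phi(k) := (k,k,\dots,k)$ is indeed a chain in $P$ with $\phi(0) = (0,\dots,0) = \hat 0$, so that $(P,\phi)$ falls within the hypotheses of \cref{sectionGenerality}. The next step is to identify the component poset $\mathcal{C}_P^\phi = \bigsqcup_{k<n} I_P(\phi(k)) \times \{k\}$ with the ambient set $os_n^{d+1}$ on which $L_n^d$ lives: a pair $\big((x_1,\dots,x_d), k\big)$ with $(x_1,\dots,x_d) \leq_{prod} (k,\dots,k)$ — i.e.\ with all $x_i \leq k$ — corresponds to the non-decreasing $(d{+}1)$-sequence $(x_1,\dots,x_d,k)$, and conversely any non-decreasing $(d{+}1)$-sequence $(y_1,\dots,y_{d+1})$ is uniquely of this form with $k = y_{d+1}$ and $x_i = y_i$ (non-decreasingness of $(y_1,\dots,y_{d+1})$ is exactly the condition $x_i \le k$ for all $i$ together with $(x_1,\dots,x_d)$ being non-decreasing). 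This is a bijection of sets $\mathcal{C}_P^\phi \xrightarrow{\ \sim\ } os_n^{d+1}$; I would denote it $\iota$.

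With $\iota$ in hand, the bulk of the argument is to show that $\iota$ carries torclosed subsets of $\mathcal{C}_P^\phi$ exactly onto the subsets $I \subseteq os_n^{d+1}$ satisfying $(1)$ and $(2')$; since both lattice structures are given by inclusion (meet is intersection in both — in $\Tam(P,\phi)$ by \cref{defConstruction}, and in $L_n^d$ by the same reasoning since the defining conditions are clearly closed under intersection), matching the families of ``good'' subsets immediately yields the lattice isomorphism. By \cref{newconditions} it suffices to match against $(1)$ and $(2')$ rather than the original Theorem 5.13 conditions. So I would split into two checks. First, being an order filter of $\mathcal{C}_P^\phi$: a subset $F$ is an order filter iff whenever $(x,k) \in F$ and $(x',k) \geq (x,k)$ in $\mathcal{C}_k$ — meaning $x' \geq_{prod} x$ with $x' \leq_{prod} \phi(k)$ — then $(x',k) \in F$; translating through $\iota$, this says: if $(x_1,\dots,x_d,k) \in I$ and $(x_1',\dots,x_d',k)$ is a non-decreasing sequence with $x_i' \geq x_i$ for all $i$, then $(x_1',\dots,x_d',k) \in I$. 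This should be precisely condition $(1)$ of Theorem 5.13 (the ``closed under increasing the first $d$ entries while fixing the last'' condition). Second, the torclosure condition of \cref{defConstruction}: for $i<j<n$, $(x,i) \in F$ and $(\phi(i{+}1),j) \in F$ imply $(x,j) \in F$. Now $\phi(i{+}1) = (i{+}1,\dots,i{+}1)$, so $(\phi(i{+}1),j)$ corresponds under $\iota$ to $(i{+}1,\dots,i{+}1,j)$, $(x,i)$ corresponds to $(x_1,\dots,x_d,i)$, and $(x,j)$ corresponds to $(x_1,\dots,x_d,j)$ (note $x \leq_{prod} \phi(i) \leq_{prod} \phi(j)$, so $(x,j) \in \mathcal{C}_j$ is legitimate) — this is verbatim condition $(2')$.

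The main obstacle I anticipate is bookkeeping rather than conceptual: getting the translation of condition $(1)$ exactly right, since Theorem 5.13's condition $(1)$ in \cite{AHJKPT23} may be phrased in a superficially different combinatorial language (e.g.\ in terms of a specific poset order on $os_n^{d+1}$ or in terms of ``adding a box'' operations), and I would need to verify that ``order filter of $\mathcal{C}_P^\phi$ under the fiberwise $\leq_{prod}$ order'' is literally the same closure property. A secondary subtlety is checking well-definedness at the boundary: that $\iota$ respects the constraint $x_i \le k$ (equivalently, that $(x,k) \in \mathcal{C}_k$ requires $x \in I_P(\phi(k))$), and that in $(2')$ the produced element $(x_1,\dots,x_d,j)$ indeed lies in $os_n^{d+1}$ and in the component $\mathcal{C}_j$, which follows from $x_i \le i \le j$. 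Once these are pinned down, the isomorphism is immediate, and I would close by remarking that $\Tam(P,\phi)$ indeed has the required form since $P = (os_n^d, \leq_{prod})$ has minimum $(0,\dots,0)$ and $\phi$ starts at it, so the hypotheses of \cref{sectionGenerality} hold and all the earlier structural theorems (\cref{propJSD}, \cref{thmleftmodular}, \cref{thmLW}) transfer to $L_n^d$.
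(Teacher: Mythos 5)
Your proposal is correct and follows essentially the same route as the paper: identify $\mathcal{C}_P^{\phi}$ with $os_n^{d+1}$ by appending the component index as the last coordinate, match condition $(1)$ with being an order filter (componentwise, since $\mathcal{C}_P^{\phi}$ is a disjoint union) and condition $(2')$ of \cref{newconditions} with torclosure. The paper's proof is just a compressed version of your argument, citing Remark 5.14 of \cite{AHJKPT23} for the order-filter translation that you carry out by hand.
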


\begin{proof}
As they are both posets ordered by inclusion, we just have to prove that the $d$-torsion classes $I$ are the torclosed subsets.
Recall the definition of $\mathcal{C}_P^{\phi}$ in \cref{sectionGenerality}.
Condition~$(1)$ says that for any $i<n$, the subset $\{x\in I\,|\,x_{d+1}=i\}$ is an order filter of $\{x\in os_n^{d+1}\,|\,x_{d+1}=i\}$ for the product order (Remark $5.14$ of \cite{AHJKPT23}). Thus $I$ is an order filter of $\mathcal{C}_P^{\phi}$. Since $(\phi(i+1),j)=(i+1,\dots,i+1,j)$,
the result follows from \cref{newconditions}.   
\end{proof}

\begin{coro}
The lattice $L_n^d$ inherits all properties of $\Tam{(P,\phi)}$. In particular it is a join-semidistributive lattice, and is semidistributive if and only if $n\leq 2$ or $d=1$. The spine of $L_n^d$ is $J(os_n^{d+1})$. We have $\ell(L_n^d)=|\mathrm{JIrr}(L_n^d)|=\binom{n+d}{d+1}$ and
$$|\mathrm{MIrr}(L_n^d)|=\binom{n+d}{d+1}+\sum_{i=0}^{n-1} \,\,\sum_{j=0}^{i}\,\, \sum_{k=0}^{i-j-1}\,\, \sum_{l=1}^{j} \binom{d+i+l-j-k-2}{d-2} .$$
\end{coro}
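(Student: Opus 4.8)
The plan is to derive each assertion in the corollary from the theorem $L_n^d = \Tam\big((os_n^d,\leq_{prod}),\phi\big)$ together with the general results already established for $(P,\phi)$-Tamari lattices, taking $P = (os_n^d,\leq_{prod})$ and $\phi(k) = (k,\dots,k)$. First I would note that $P$ has a minimum $\hat 0 = \phi(0) = (0,\dots,0)$ and $\phi$ is indeed a chain of $P$ starting at $\hat 0$, so all results of \cref{sectionPphiTamari} apply. Join-semidistributivity is then immediate from \cref{propJSD}. For the semidistributivity characterization I would apply the second statement of \cref{propJSD}: $L_n^d$ is semidistributive iff every element of $I_P(\phi(n-1)) = os_n^d$ (note $\phi(n-1) = (n-1,\dots,n-1)$ is the maximum of $os_n^d$) is comparable in the product order to every $\phi(k) = (k,\dots,k)$. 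A $d$-sequence $(x_1,\dots,x_d)$ is comparable to $(k,\dots,k)$ iff either all $x_i \geq k$ or all $x_i \leq k$; this fails exactly when some coordinate is $<k$ and another is $>k$, which is possible as soon as $d\geq 2$ and $n\geq 3$ (take $(0,2,\dots)$ and $k=1$). So the condition holds iff $d=1$ or $n\leq 2$.

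Next I would identify the spine: by \cref{propspinechainsTam} the spine of $\Tam(P,\phi)$ is in bijection with the order filters of $(\mathcal{C}_P^\phi,\leq_{prod})$, and I would observe that $\mathcal{C}_P^\phi = \bigsqcup_{k<n} I_P(\phi(k))\times\{k\}$ with the product order is order-isomorphic to $os_n^{d+1}$ via $\big((x_1,\dots,x_d),k\big)\mapsto (x_1,\dots,x_d,k)$ — the condition $(x_1,\dots,x_d)\leq \phi(k)$, i.e. $x_i\leq k$ for all $i$, is exactly what is needed for $(x_1,\dots,x_d,k)$ to be a non-decreasing $(d+1)$-sequence once we know $(x_1,\dots,x_d)$ is non-decreasing and its entries lie in $\{0,\dots,n-1\}$; and the product order matches. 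Hence the order filters of $\mathcal{C}_P^\phi$ are $J(os_n^{d+1})$, so the spine is $J(os_n^{d+1})$. The same isomorphism gives $\ell(L_n^d) = |\mathcal{C}_P^\phi| = |os_n^{d+1}| = \binom{n+d}{d+1}$ (number of non-decreasing $(d{+}1)$-tuples from an $n$-element set, i.e. multisets of size $d{+}1$), and $|\mathrm{JIrr}(L_n^d)| = |\mathcal{C}_P^\phi|$ by \cref{joinirrprop}, giving the equality $\ell(L_n^d)=|\mathrm{JIrr}(L_n^d)|=\binom{n+d}{d+1}$ (consistent with join-extremality, \cref{propspinechainsTam}).

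The remaining and most substantial task is the formula for $|\mathrm{MIrr}(L_n^d)|$. Here I would invoke the classification of meet-irreducibles in \cref{joinirrprop}: they correspond to pairs $\big((x,k),a\big)$ with $(x,k)\in\mathcal{C}_P^\phi$ and $a = (\hat 0,k)$ or $a = a_{j,k} = (\phi(k-j),k)$ with $x$ incomparable to $a_{j,k}$. So $|\mathrm{MIrr}(L_n^d)|$ is a sum over $k<n$ of (the number of $x\in os_n^d$ with $x\leq \phi(k)$, counting the choice $a=(\hat 0,k)$) plus (the number of pairs $(x,j)$ with $x\leq\phi(k)$, $0<j\le k$, and $x$ incomparable to $\phi(k-j)=(k-j,\dots,k-j)$). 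The first part sums to $\binom{n+d}{d+1}$ as above, matching the leading term. For the second part, the incomparability condition "$x\leq\phi(k)$ and $x$ incomparable to $(m,\dots,m)$" with $m=k-j$ means $x_1 < m < x_d \le k$ (using $x$ non-decreasing): $x$ has some entry $<m$ and some entry $>m$. I would count such $x$ by the number $r$ of entries $\le m-1$ (so $1\le r\le d-1$), the remaining $d-r$ entries lying in $\{m+1,\dots,k\}$; this is $\big(\text{multisets of size }r\text{ from }\{0,\dots,m-1\}\big)\times\big(\text{multisets of size }d-r\text{ from }\{m+1,\dots,k\}\big) = \binom{m-1+r}{r}\binom{k-m-1+(d-r)}{d-r}$. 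Then reindexing the quadruple sum (over $k$, over $m=k-j$ equivalently over $j$, over the split $r$, and shifting $i\leftrightarrow k$, $l\leftrightarrow r$) should match the stated $\sum_{i=0}^{n-1}\sum_{j=0}^{i}\sum_{k=0}^{i-j-1}\sum_{l=1}^{j}\binom{d+i+l-j-k-2}{d-2}$. I expect the main obstacle to be purely bookkeeping: getting the index ranges and the binomial arguments to line up exactly after the substitutions (in particular checking the edge cases $m=0$, where no $x$ is incomparable, are correctly excluded, and that $\binom{d-2}{\cdot}$ with the stated lower index captures $\binom{m-1+r}{r}\binom{k-m-1+d-r}{d-r}$ summed over the appropriate split — likely one of the two multiset factors is being summed in closed form via a hockey-stick identity, collapsing a fifth sum into the $\binom{d+i+l-j-k-2}{d-2}$ shown). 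Once the correspondence of \cref{joinirrprop} is unwound, the rest is a finite combinatorial identity that I would verify by the substitution described and a hockey-stick simplification.
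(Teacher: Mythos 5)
Your reduction of the first four claims to the general theory is correct and is the intended route: $P=(os_n^d,\leq_{prod})$ has minimum $(0,\dots,0)=\phi(0)$, the map $\big((x_1,\dots,x_d),k\big)\mapsto(x_1,\dots,x_d,k)$ is an isomorphism $(\mathcal{C}_P^{\phi},\leq_{prod})\cong os_n^{d+1}$, and \cref{propJSD,propspinechainsTam,joinirrprop} then give join-semidistributivity, the semidistributivity criterion (your comparability analysis is right), the spine, and $\ell(L_n^d)=|\mathrm{JIrr}(L_n^d)|=\binom{n+d}{d+1}$.

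The gap is in the count of meet-irreducibles. The strategy --- enumerate the pairs $\big((x,k),a\big)$ of \cref{joinirrprop}, with the term $\binom{n+d}{d+1}$ accounting for the choices $a=(\hat 0,k)$ --- is right, but your count of the elements $x\le\phi(k)$ incomparable to $\phi(m)=(m,\dots,m)$ fails for $d\ge 3$. Partitioning by ``$r$ entries $\le m-1$ and the remaining $d-r$ entries in $\{m+1,\dots,k\}$'' forbids coordinates equal to $m$, yet an incomparable $x$ may well have such coordinates: only $x_1<m<x_d$ is forced, and the middle coordinates are unconstrained. Concretely, for $n=d=3$ and $(k,m)=(2,1)$ your sum gives $2$, missing $(0,1,2)$, so you would obtain $|\mathrm{MIrr}(L_3^3)|=17$ instead of the correct $18=\binom{6}{4}+\binom{3}{1}$ given by the stated formula. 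The correct partition is by the pair $(r,s)$ with $r,s\ge1$ and $r+s\le d$, where $r$ coordinates lie in $\{0,\dots,m-1\}$, $s$ coordinates lie in $\{m+1,\dots,k\}$, and the remaining $d-r-s$ all equal $m$; this gives $\sum_{r,s\ge 1,\; r+s\le d}\binom{m-1+r}{r}\binom{k-m-1+s}{s}$, and it is this quantity (not yours) that reindexes to the quadruple sum in the statement. Your version happens to coincide with the correct one when $d=2$, where the summand is $\binom{\cdot}{0}=1$ and there is no room for a middle coordinate equal to $m$, which is presumably why the error is easy to miss on small tests.
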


\begin{prop}
\label{proptotallysym}
 We have an isomorphism of posets between $os_n^d$ and $os_{d+1}^{n-1}$, thus $|J(os_n^d)| = |J(os_{d+1}^{n-1})|$. We have that $|J(os_n^d)|$ is the number of totally symmetric $d$-dimensional partitions which fit in an $d$-dimensional box whose sides all have length $n$.  We deduce that $|J(os_3^d)|=2^{d+1}$ and $|J(os_4^d)|=a_{d+1}$ where 
 $a$ is sequence $A005157$ of $\mathrm{OEIS}$. 
\end{prop}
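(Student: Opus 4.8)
\textbf{Proof proposal for Proposition~\ref{proptotallysym}.}

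The plan is to establish the poset isomorphism $os_n^d \cong os_{d+1}^{n-1}$ first, then translate order ideals into totally symmetric partitions, and finally specialize. For the isomorphism, I would realize a non-decreasing $d$-sequence $x = (x_1 \leq x_2 \leq \cdots \leq x_d)$ with entries in $\{0,1,\dots,n-1\}$ as a lattice path / staircase in the $d \times (n-1)$ grid, namely the Young-diagram-like region $\{(a,b) : 1 \leq a \leq d,\ 1 \leq b \leq x_a\}$. The complement of this region inside the box (read off by columns instead of rows) is again such a staircase but now with $n-1$ parts bounded by $d$, i.e. an element of $os_{d+1}^{n-1}$ after a harmless shift of the index range. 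Concretely, sending $x$ to the sequence whose $b$-th entry is $\#\{a : x_a \geq b\}$ is an involutive (up to swapping the two parameters) bijection, and I would check that it reverses nothing: $x \leq_{prod} x'$ iff the corresponding conjugate sequences are comparable, because containment of the staircase regions is preserved under conjugation. This gives $|J(os_n^d)| = |J(os_{d+1}^{n-1})|$ immediately since $J$ is an isomorphism invariant.

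Next I would identify order ideals of $(os_n^d, \leq_{prod})$ with totally symmetric plane-partition-type objects. An order ideal $\mathcal{I}$ of $os_n^d$ is, by definition, a down-closed family of non-decreasing $d$-tuples; equivalently it is a monotone $\{0,1\}$-valued function on the $d$-cube $\{0,\dots,n-1\}^d$ that is symmetric under permuting coordinates (the non-decreasing tuples are a fundamental domain for the $S_d$-action, and being an order ideal there is exactly being a symmetric order ideal on the whole cube). Stacking these indicator functions in the standard way — placing a unit cube at $(a_1,\dots,a_d,h)$ whenever the tuple sorted from $(a_1,\dots,a_d)$ is in $\mathcal{I}$ at ``height'' controlled by one more coordinate — recovers precisely the definition of a totally symmetric $d$-dimensional partition inside a box with all sides of length $n$: total symmetry is the full $S_{d+1}$-invariance, which matches the symmetry in the $d$ spatial coordinates together with the transpose symmetry that the isomorphism of the previous paragraph already exhibits. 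So $|J(os_n^d)|$ counts exactly these totally symmetric $d$-dimensional partitions in an $n^{d+1}$ box.

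Finally I would specialize. For $d$ arbitrary and the small parameter equal to $3$, use the isomorphism to write $|J(os_3^d)| = |J(os_{d+1}^{2})|$. But $os_{d+1}^2$ consists of pairs $0 \leq x_1 \leq x_2 \leq d+1-1 = d$, i.e. of order ideals in a product of two chains — wait, more carefully: I would directly count order ideals of $os_{d+1}^2$, which is the ``staircase'' sub-poset of $C_{d+1} \times C_{d+1}$ on weakly increasing pairs; its order ideals are easily enumerated (each is determined by a lattice path) and the count is $2^{d+1}$. For $|J(os_4^d)| = |J(os_{d+1}^3)|$, I would cite the known enumeration of totally symmetric plane partitions in a box, whose relevant slice is OEIS sequence A005157, matching $a_{d+1}$. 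The main obstacle I anticipate is not the isomorphism itself but getting the ``total symmetry'' bookkeeping exactly right — pinning down which of the $d+1$ coordinates of the partition corresponds to the chain-valued heights versus the $d$ symmetric directions, and verifying that the conjugation bijection of the first paragraph is precisely the extra transposition needed to promote $S_d$-symmetry to $S_{d+1}$-symmetry; this is the step where an off-by-one or a misidentified symmetry group would break the correspondence with the classical TSPP count.
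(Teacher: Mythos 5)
Your first step (the conjugation isomorphism $os_n^d\cong os_{d+1}^{n-1}$ via $y_b=\#\{a: x_a\geq b\}$, followed by reversing the index order to make the sequence non-decreasing) is correct and is the intended argument; containment of staircase regions is the product order and is preserved by transposition. The specializations in your last paragraph are also essentially right: order ideals of $os_{d+1}^2$ are self-conjugate partitions in a $(d+1)\times(d+1)$ square, counted by $2^{d+1}$, and $|J(os_{d+1}^3)|$ is the number of totally symmetric plane partitions in a $(d+1)$-cube, which is A005157.

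The genuine gap is in your middle step. The correct identification is that order ideals of $os_n^d=\{0,\dots,n-1\}^d/S_d$ biject with $S_d$-invariant order ideals of the $d$-dimensional box $\{0,\dots,n-1\}^d$ (the key point, which you state correctly at first, being that sorting is monotone, so the $S_d$-orbit of an order ideal of the fundamental domain is an order ideal of the cube). These $S_d$-invariant order ideals \emph{are} the totally symmetric partitions filling a $d$-dimensional box with all sides of length $n$; the symmetry group is $S_d$ and there is no extra coordinate. Instead you stack the indicator function along an additional height coordinate, producing a $(d+1)$-dimensional object, and you require $S_{d+1}$-invariance, claiming the conjugation of your first paragraph supplies the extra transposition. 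That conjugation is an isomorphism between the two \emph{different} posets $os_n^d$ and $os_{d+1}^{n-1}$, not an automorphism of $os_n^d$ (it is only a self-map when $n=d+1$), so it cannot promote the symmetry group. Concretely your version gives the wrong count: for $n=3$, $d=1$ it would predict the number of $S_2$-invariant order ideals of a $3\times 3$ square, namely $8$, whereas $|J(os_3^1)|=|J(C_3)|=4=2^{d+1}$. Note also that your final paragraph silently uses the correct ($S_d$-in-a-$d$-box) reading when it equates $|J(os_{d+1}^3)|$ with TSPPs in a $(d+1)$-cube, so your write-up is internally inconsistent; replacing the $(d+1,S_{d+1})$ bookkeeping by the $(d,S_d)$ one repairs the proof.
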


\begin{prop}
\label{proppropertiersLnd}
For all $d\geq 1$, $|L_3^d| = d+3+5\times 2^d$. Denote by $K_1:=[0\cdots022,13\cdots 3]$, $K_2:=[0\cdots 023,13\cdots 3]$ and $K_3:=\{x\in os_4^{d+1}\mid x\not\geq 1\cdots 13\}$ three subposets of $os_4^{d+1}$. Then
\begin{equation}
\label{formulaL4d}
|L_4^d|=8+d+3\times 2^{d+1} + (d+4)\,(a_d-1) + \sum_{i=2}^d a_i + 2\,\big(|J(K_1)|+|J(K_2)|\big) +|J(K_3)|    
\end{equation}
\end{prop}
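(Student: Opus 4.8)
The plan is to prove the two parts separately: the formula for $|L_3^d|$ falls straight out of \cref{propComptage3}, whereas $|L_4^d|$ needs a direct enumeration of torclosed subsets.

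\emph{The formula for $|L_3^d|$.} Since $L_3^d = \Tam\big((os_3^d,\leq_{prod}),\phi\big)$ with $\phi$ of $3$ elements, \cref{propComptage3} (with $P = os_3^d$) gives $|L_3^d| = 1 + |J(I_1)| + \big|J\big((\mathcal{C}_P^\phi,\leq_{prod})\big)\big| + |J(I_2\setminus I_1)|$, and it remains to evaluate the three summands. We have $I_1 = I_P(\phi(1)) = os_2^d$, a chain on $d+1$ elements, so $|J(I_1)| = d+2$. The map $(x_1,\dots,x_d,k)\mapsto(x_1,\dots,x_d,k)$ is an order isomorphism $(\mathcal{C}_P^\phi,\leq_{prod})\cong os_3^{d+1}$ (this is the poset whose order filters index the spine, by \cref{propspinechainsTam}), so $\big|J\big((\mathcal{C}_P^\phi,\leq_{prod})\big)\big| = |J(os_3^{d+1})| = 2^{d+2}$ by \cref{proptotallysym}. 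Finally $I_2 = I_P(\phi(2)) = os_3^d$, and $I_2\setminus I_1 = \{x\in os_3^d : x_d = 2\}$ is isomorphic to $os_3^{d-1}$ after deleting the last coordinate, so $|J(I_2\setminus I_1)| = 2^d$. Summing gives $|L_3^d| = 1 + (d+2) + 2^{d+2} + 2^d = d+3+5\times 2^d$.

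\emph{The formula for $|L_4^d|$.} Now $\phi$ has $4$ elements and \cref{propComptage3} no longer applies, so I would enumerate torclosed $F$ through their components $F_k := F\cap\mathcal{C}_k$, each an order filter of $I_k\cong os_{k+1}^d$ for $0\le k\le 3$. The six torclosed conditions $(i,j)$ with $0\le i<j\le 3$ become implications between the $F_k$ once one records that $\phi(k) = \max I_k$, so $\phi(k)\in F_k \iff F_k\neq\emptyset$, while $\hat 0 = \min I_k$, so $\hat 0\in F_k \iff F_k = I_k$. I would stratify by the status of $F_3$: when $F_3 = \emptyset$ all conditions involving the top component are vacuous and $(F_0,F_1,F_2)$ is precisely a torclosed subset of the three-component poset of $(os_3^d,\phi)$, which re-contributes $|L_3^d|$; when $F_3\neq\emptyset$ one resolves the remaining coupling conditions one component at a time, each step splitting off a sub-family counted by a product of order-ideal numbers of explicit subposets of $os_4^d$ and of $(\mathcal{C}_P^\phi,\leq_{prod})\cong os_4^{d+1}$. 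Most of these subposets are of the form $os_2^m$, $os_3^m$, $os_4^m$, or a complement $os_4^d\setminus I_P(\phi(k))$ (again an $os$-poset after deleting coordinates), whose order-ideal counts are respectively $m+2$, $2^{m+1}$ and $a_{m+1}$ by \cref{proptotallysym} and the duality $os_n^d\cong os_{d+1}^{n-1}$; summing these standard cells produces the part $8+d+3\times 2^{d+1}+(d+4)(a_d-1)+\sum_{i=2}^d a_i$ of the formula. The three cells whose subposets do not reduce to standard $os$-posets contribute exactly $2|J(K_1)|$, $2|J(K_2)|$ and $|J(K_3)|$, with $K_1$ and $K_2$ the intervals and $K_3$ the order ideal named in the statement; collecting everything gives formula~(\ref{formulaL4d}).

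\emph{Main obstacle.} The delicate point is the bookkeeping in the $n=4$ enumeration, and specifically the condition $(1,3)$: the element $\phi(2)$ it involves is neither the minimum nor the maximum of $I_3$, so the hypothesis $\phi(2)\in F_3$ is a genuine constraint rather than an all-or-nothing alternative; this is exactly what keeps the residual cells from collapsing to standard $os$-posets and forces the terms $|J(K_1)|$, $|J(K_2)|$, $|J(K_3)|$ into the answer. Carrying out the case analysis without missing or double-counting a profile of $(F_0,F_1,F_2,F_3)$, identifying the three residual subposets with $K_1$, $K_2$, $K_3$, and verifying the order-ideal identities for $os$-posets used to simplify the standard cells, are the remaining routine but lengthy computations.
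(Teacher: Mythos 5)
Your proof of the first statement is complete and correct: applying \cref{propComptage3} to $L_3^d=\Tam\big((os_3^d,\leq_{prod}),\phi\big)$ and evaluating the three summands as $|J(I_1)|=|J(os_2^d)|=d+2$, $\big|J\big((\mathcal{C}_P^{\phi},\leq_{prod})\big)\big|=|J(os_3^{d+1})|=2^{d+2}$ (via the identification $(x_1,\dots,x_d,k)\mapsto(x_1,\dots,x_d,k)$ with $os_3^{d+1}$ and \cref{proptotallysym}) and $|J(I_2\setminus I_1)|=|J(os_3^{d-1})|=2^d$ gives exactly $1+(d+2)+2^{d+2}+2^d=d+3+5\times 2^d$. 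This is evidently the computation that \cref{propComptage3} was set up to perform, and it checks out at $d=1$ against $|\Tam_4|=14$.

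The second statement, however, is not proved. Your stratification by $F_3=F\cap\mathcal{C}_3$ is a sensible starting point, and the $F_3=\emptyset$ stratum does contribute $|L_3^d|$ as you say, but everything after that is asserted rather than shown: you never list the cells of the $F_3\neq\emptyset$ stratum, never exhibit the coupling conditions being resolved, never compute which products of order-ideal numbers arise, and never identify the three residual subposets with $K_1$, $K_2$, $K_3$ or justify the multiplicities $2$, $2$, $1$. Since the entire content of \eqref{formulaL4d} is precisely which terms arise from which cells, the sentence ``summing these standard cells produces the part $8+d+3\times 2^{d+1}+(d+4)(a_d-1)+\sum_{i=2}^d a_i$'' restates the claim instead of arguing for it. The bookkeeping is also heavier than your closing paragraph suggests: besides $(\phi(2),3)$, the elements $(\phi(1),2)$ and $(\phi(1),3)$ are interior in their components, so in the $F_3\neq\emptyset$ stratum several genuine membership constraints interact, and conditions $(0,3)$, $(1,3)$, $(2,3)$ couple $F_3$ to all of $F_0,F_1,F_2$ simultaneously. (For what it is worth, the formula itself is consistent: at $d=1$ the intervals $K_1,K_2$ are empty, $|J(K_3)|=12$, and the right-hand side evaluates to $42=|\Tam_5|$; but that verifies the statement, not your proof.) To close the gap you must actually carry out and record the partition of the torclosed subsets of $\bigsqcup_{k<4}\,os_{k+1}^d\times\{k\}$ into the cells you allude to.
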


\begin{remark}
With \cref{proptotallysym} and \eqref{formulaL4d}, we are able to compute $|L_4^7|=6543848$ and $|L_4^8|=130286256$. It completes column $n=4$ of Table $2$ of \cite{AHJKPT23}. 
\end{remark}

\begin{conjecture}
For $n$ fixed, $|L_n^d|=\mathcal{O}_{d\rightarrow \infty}\big(|J(os_n^{d+1})|\big)$. 
\end{conjecture}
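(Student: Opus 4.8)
The plan is to pass to the combinatorial model $L_n^d=\Tam\big((os_n^d,\leq_{prod}),\phi\big)$ of \cref{thmprincaus}, in which $\phi(k)=(k,\dots,k)$ and $I_{os_n^d}(\phi(k))=os_{k+1}^d$ for $k<n$, and to compare the number of torclosed subsets with the number of order filters of $(\mathcal{C}_P^{\phi},\leq_{prod})\cong os_n^{d+1}$. By \cref{propspinechainsTam} and the corollary following \cref{thmprincaus}, the latter is the spine of $L_n^d$, so writing $s_n^d:=|J(os_n^{d+1})|$ the conjecture is equivalent to the existence of a constant $C_n$, independent of $d$, with $|L_n^d|\le C_n\,s_n^d$; the reverse inequality $|L_n^d|\ge s_n^d$ is automatic since the spine sits inside $L_n^d$.

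First I would set up a recursion in $n$. Since $I_{os_n^d}(\phi(k))=os_{k+1}^d=I_{os_{n-1}^d}(\phi(k))$ for every $k<n-1$, the principal ideal of $L_n^d$ below the coatom $\mathcal{C}_P^{\phi}\setminus\mathcal{C}_{n-1}$ — that is, the torclosed subsets $F$ with $F\cap\mathcal{C}_{n-1}=\emptyset$ — is isomorphic as a lattice to $L_{n-1}^d$, by \cref{thmprincaus} applied with $n-1$. Hence $|L_n^d|=|L_{n-1}^d|+N_n^d$, where $N_n^d$ counts the torclosed subsets meeting $\mathcal{C}_{n-1}$, and since $os_{n-1}^{d+1}$ is an order ideal of $os_n^{d+1}$ we have $s_{n-1}^d\le s_n^d$. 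Starting from $|L_1^d|=2=s_1^d$, the induction therefore reduces to a single estimate: $N_n^d\le C_n'\,s_n^d$ for a constant $C_n'$ depending only on $n$.

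To attack this I would first record the rigid structure of a torclosed $F$: whenever $F\cap\mathcal{C}_j\ne\emptyset$ it contains its maximum $a_{0,j}=(\phi(j),j)$, and feeding this into the torclosed condition with $i=j-1$ forces $F\cap\mathcal{C}_{j-1}\subseteq F\cap\mathcal{C}_j$; so the nonempty components come in nested ``runs'', and within a run the only remaining freedom is, per component, the diagonal threshold $s_j=\min\{i\mid(\phi(i),j)\in F\}$ and how far $F\cap\mathcal{C}_j$ overhangs below its predecessor. I would then push $F$ to its $\leq_{prod}$-closure $\rho(F):=\{(y,j)\mid \exists\,(x,i)\in F,\ (x,i)\leq_{prod}(y,j)\}$, which is a $\leq_{prod}$-order filter hence torclosed, contains $F$, and again meets $\mathcal{C}_{n-1}$. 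The point is \emph{not} that the fibres of $\rho$ are bounded — already for $n=3$ the fibre over $\hat 1$, namely the torclosed $F$ with $F\cap\mathcal{C}_0=\mathcal{C}_0$, is unbounded in $d$ (it has size $\sim 2^d$) — but that the large fibres occur only over a controlled family of ``corner'' spine elements, and each such fibre is itself a $\Tam$-type counting problem of strictly smaller size (for the fibre over $\hat 1$: after stripping $\mathcal{C}_0$ one is left with configurations over $\mathcal{C}_1,\dots,\mathcal{C}_{n-1}$ in which each component is either filled completely or confined below $\uparrow\phi(1)$, i.e.\ lives on copies of $os_{j+1}^{d-1}$). So the plan is a double induction: on $n$ via the coatom recursion, and, inside the bound $N_n^d\le C_n'\,s_n^d$, on $(n,d)$ via the $\rho$-fibres, arranging that $\sum_S|\rho^{-1}(S)|\le C_n'\,s_n^d$ because the excess over the spine is localised on corner fibres whose sizes sum to $O_n(s_n^d)$.

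The step I expect to be the genuine obstacle is making this last bookkeeping close with constants that do not depend on $d$: the danger is that unfolding the $\leq_{prod}$-closure introduces a $\mathrm{poly}(d)$ factor, since each of the up to $n-1$ components below the relevant threshold carries unboundedly many order filters, and the computations of \cref{proppropertiersLnd} show $|L_n^d|-s_n^d$ is already a full constant fraction of $s_n^d$ (for $n=3$ it is $2^d+d+3$ against $s_3^d=4\cdot 2^d$), so no growing loss can be absorbed. One really needs that, once the threshold data is fixed, the overhang freedom collapses to a bounded number of choices, and proving this uniformly in $d$ for all $n$ is the crux. As a parallel, possibly more robust, route I would instead read torclosed subsets as constrained sequences through $\mathcal{C}_0,\dots,\mathcal{C}_{n-1}$, encode $\sum_d|L_n^d|\,x^d$ by a transfer operator, and show its dominant singularity coincides with that of $\sum_d s_n^d\,x^d$, whose exponential growth rate is accessible through \cref{proptotallysym} (totally symmetric plane partitions, with product formulas for $n\le 4$ and MacMahon/hook-content estimates in general); matching growth rates would then give the conjecture, at the cost of a weaker constant than the recursive argument.
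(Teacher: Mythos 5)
The statement you are proving is labelled a \emph{conjecture} in the paper: no proof is given there, and as far as the paper is concerned it is open. Your proposal does not close it either. The preliminary reductions are sound --- the identification of the torclosed subsets avoiding $\mathcal{C}_{n-1}$ with $L_{n-1}^d$ via \cref{thmprincaus}, the nesting $F\cap\mathcal{C}_{j-1}\subseteq F\cap\mathcal{C}_j$ forced by $a_{0,j}\in F$, and the $\leq_{prod}$-closure map $\rho$ onto the spine of \cref{propspinechainsTam} are all correct --- but they only repackage the problem. The entire content of the conjecture is the single estimate you defer, namely $N_n^d\le C_n'\,s_n^d$ with $C_n'$ independent of $d$, equivalently that the total mass of the $\rho$-fibres is $O_n(s_n^d)$. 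You correctly observe that individual fibres are unbounded in $d$ (already the fibre over $\hat 1$ for $n=3$ has size $\sim 2^d$, a constant fraction of $s_3^d$), and that no $\mathrm{poly}(d)$ loss can be tolerated; but the claim that the large fibres are ``localised on corner spine elements whose sizes sum to $O_n(s_n^d)$'' is exactly what needs to be proved and is nowhere established. Your own worked case $n=4$ via \cref{proppropertiersLnd,proptotallysym} illustrates the difficulty: the formula for $|L_4^d|$ contains the terms $|J(K_1)|,|J(K_2)|,|J(K_3)|$, which are order-filter counts of posets comparable in size to $os_4^{d+1}$ itself, and comparing them to $a_{d+2}$ is an open counting problem of the same nature as the conjecture, not a consequence of anything in the paper.

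In short: this is a research plan with an honestly flagged gap at its crux, not a proof. If you want to make progress, the two concrete sub-targets your outline isolates are (i) a uniform-in-$d$ bound on $\sum_{S}|\rho^{-1}(S)|$ over spine elements $S$ meeting $\mathcal{C}_{n-1}$, and (ii) for the transfer-matrix route, showing that the generating functions $\sum_d |L_n^d|x^d$ and $\sum_d |J(os_n^{d+1})|x^d$ have the same radius of convergence \emph{and} the same order of singularity there (matching exponential growth rates alone gives $|L_n^d|\le s_n^d\cdot e^{o(d)}$, which is strictly weaker than the $O$-statement). Neither is done, so the argument as it stands does not establish the conjecture.
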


\subsection{$d$-torsion classes of the higher Nakayama algebras of type $\mathbb{A}$}
\label{sectionNakayama}

We keep the notations from \cref{sectionAuslander}. A Kupisch series of type $\mathbb{A}_n$ is a sequence $\underline{l}=(l_0,l_1,\dots,l_{n-1})$ of positive integers satisfying $l_0=1$ and $\forall i\geq 1,\, 2\leq l_i\leq l_{i-1}+1$. We define 
$os_{\underline{l}}^{d+1} := \{y=(y_1,\dots,y_{d+1})\in os_n^{d+1} \mid y_1\geq y_{d+1}-l_{y_{d+1}} +1 \}  \subseteq os_n^{d+1}$.
We denote $L_{\underline{l}}^d$ the lattice ordered by inclusion of the $d$-torsion classes of the higher Nakayama algebra of type $\mathbb{A}$ associated to $\underline{l}$ (see Section $6.1$ of \cite{AHJKPT23} for the details). We can prove

\begin{lem}
\label{lemNaka}
 $L_{\underline{l}}^d$ is the lattice ordered by inclusion on the subsets $I\cap os_{\underline{l}}^{d+1} \subseteq os_{\underline{l}}^{d+1}$ for all torclosed subset $I\subseteq os_n^{d+1}$.
\end{lem}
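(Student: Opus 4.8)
The plan is to identify, for each Kupisch series $\underline{l}$, a suitable congruence of $\Tam\big((os_n^d,\leq_{prod}),\phi\big)$ whose quotient lattice is exactly $L_{\underline{l}}^d$, and to recognize that congruence as (essentially) a Kupisch equivalence relation in the sense of \cref{propKupisch}. By \cref{thmprincaus} we have $L_n^d = \Tam\big((os_n^d,\leq_{prod}),\phi\big)$ with $\phi(k)=(k,\dots,k)$, so the torclosed subsets of $\mathcal{C}_P^{\phi}$ are precisely the $d$-torsion classes $I\subseteq os_n^{d+1}$. The set $R_{\underline{l}}:=os_{\underline{l}}^{d+1}$ is, by its definition $\{y\in os_n^{d+1}\mid y_1\geq y_{d+1}-l_{y_{d+1}}+1\}$, an order filter of $\mathcal{C}_P^{\phi}$ of the prescribed shape: the condition only constrains $y_1$ from below in terms of the last coordinate $y_{d+1}=i$, so in the $i$-th component $\mathcal{C}_i=I_P(\phi(i))\times\{i\}$ it selects exactly those $x=(x_1,\dots,x_d)\leq(i,\dots,i)$ with $x_1\geq i-l_i+1$, i.e. with $x\geq \phi(K_i)$ for $K_i:=\max(i-l_i+1,0)$. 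The Kupisch conditions $l_0=1$ and $2\leq l_i\leq l_{i-1}+1$ translate precisely into $K_0=0$, $K_i\leq i$, and $K_i$ non-decreasing, so $R_{\underline{l}}=R_K$ for this sequence $K$, and $\equiv_{\underline{l}}$ given by $I\equiv_{\underline{l}} I' \iff I\cap R_{\underline{l}} = I'\cap R_{\underline{l}}$ is exactly the Kupisch equivalence relation $\equiv_K$.

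First I would verify this dictionary between Kupisch series and admissible sequences $K$ in detail, and record that $\equiv_{\underline{l}}$ is a lattice congruence of $L_n^d$ by \cref{propKupisch}. Next, the quotient $L_n^d/\!\equiv_{\underline{l}}$ is by definition the poset of congruence classes; since the meet in $\Tam(P,\phi)$ is intersection and $R_{\underline{l}}$ is fixed, each class $[I]$ has a canonical representative, and one checks that the map $I\mapsto I\cap os_{\underline{l}}^{d+1}$ is a bijection from these classes onto $\{I\cap os_{\underline{l}}^{d+1}\mid I \text{ torclosed}\}$ that is an isomorphism of posets (it is order-preserving and order-reflecting because $R_{\underline{l}}$ is an order filter and intersection is monotone, and a congruence-class representative is recovered from its trace on $R_{\underline{l}}$). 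Hence the lattice described in the statement — inclusion on the subsets $I\cap os_{\underline{l}}^{d+1}$ — is isomorphic to the quotient $L_n^d/\!\equiv_{\underline{l}}$.

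It then remains to match this quotient with the actual lattice of $d$-torsion classes of the higher Nakayama algebra $\Lambda_{\underline{l}}$. I would invoke the combinatorial description of those $d$-torsion classes from Section~6.1 of \cite{AHJKPT23}: restriction/corestriction along the idempotent cutting $\Lambda_n^d$ down to $\Lambda_{\underline{l}}$ realizes the $d$-torsion classes of $\Lambda_{\underline{l}}$ as the traces on $os_{\underline{l}}^{d+1}$ of the $d$-torsion classes of $\Lambda_n^d$, compatibly with inclusion. Combining this with the previous paragraph gives $L_{\underline{l}}^d \cong L_n^d/\!\equiv_{\underline{l}}$ as the lattice of the subsets $I\cap os_{\underline{l}}^{d+1}$, which is the claim. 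The main obstacle I anticipate is precisely this last step: making the identification with \cite{AHJKPT23} airtight — in particular checking that \emph{every} subset of the form $I\cap os_{\underline{l}}^{d+1}$ arises from an honest $d$-torsion class of $\Lambda_{\underline{l}}$ and conversely, and that no two distinct traces collapse — since this requires either re-deriving their description in our language or carefully citing the right statements; the purely lattice-theoretic quotient argument is routine once the congruence $\equiv_{\underline{l}}=\equiv_K$ is in hand.
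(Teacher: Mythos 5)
There is a genuine gap, and it is the one you flag yourself at the end. Your first two paragraphs (identifying $\equiv_{\underline{l}}$ with the Kupisch relation $\equiv_K$ for $K_i=\max(0,i-l_i+1)$, and showing that the quotient $L_n^d/\!\equiv_K$ is order-isomorphic to the poset of traces $I\cap os_{\underline{l}}^{d+1}$) are correct but prove the \emph{wrong statement}: they are the content of the theorem that follows \cref{lemNaka} in the paper, which is deduced \emph{from} the lemma, not the other way around. The lemma itself makes no reference to any congruence; its entire content is the representation-theoretic/combinatorial identification of the $d$-torsion classes of the higher Nakayama algebra $\Lambda_{\underline{l}}$ (as described in Section~6.1 of \cite{AHJKPT23}) with the traces of torclosed subsets of $os_n^{d+1}$. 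Routing through the quotient $L_n^d/\!\equiv_K$ does not reduce that work at all: to know that this quotient is $L_{\underline{l}}^d$ you must already know the lemma, so your architecture is circular unless the third step is carried out independently.

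That third step is precisely what is missing. Concretely, you would need to (a) write down the combinatorial characterization of the $d$-torsion classes of $\Lambda_{\underline{l}}$ as subsets $T\subseteq os_{\underline{l}}^{d+1}$ satisfying closure conditions analogous to conditions $(1)$ and $(2')$ of \cref{newconditions}; (b) check that for every torclosed $I\subseteq os_n^{d+1}$ the trace $I\cap os_{\underline{l}}^{d+1}$ satisfies those conditions (the easy direction, since the conditions restrict); and (c) show that every such $T$ arises as a trace, e.g.\ by exhibiting a torclosed $I\subseteq os_n^{d+1}$ with $I\cap os_{\underline{l}}^{d+1}=T$ — typically the torclosure of the order filter generated by $T$ — and verifying that this closure adds no new elements of $os_{\underline{l}}^{d+1}$. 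Step (c) is the nontrivial heart of the lemma and is exactly what you defer to ``invoking'' \cite{AHJKPT23} and then name as your main obstacle; until it is done, the lemma is not proved. (The paper, being an extended abstract, states the lemma with ``we can prove'' and omits the argument, so there is no written proof to compare against; but a complete proof must supply (a)--(c).)
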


\begin{thm}
    $L_{\underline{l}}^d$ is a lattice quotient of $L_n^d$. It inherits the properties of $\Tam{(P,\phi)}$ that are preserved by lattice quotient. In particular it is a join-semidistributive lattice.
\end{thm}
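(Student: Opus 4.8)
The plan is to prove that $L_{\underline{l}}^d$ is a lattice quotient of $L_n^d = \Tam\big((os_n^d,\leq_{prod}),\phi\big)$ by exhibiting an explicit surjective lattice morphism, and then to invoke the general principle that the quotient of a lattice by a congruence inherits join-semidistributivity (and any property stable under quotients).

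First I would use \cref{lemNaka}: it says that $L_{\underline{l}}^d$ is (isomorphic to) the poset of sets of the form $I \cap os_{\underline{l}}^{d+1}$, ordered by inclusion, as $I$ ranges over the torclosed subsets of $os_n^{d+1}$. So the natural candidate for the quotient map is $\pi : L_n^d \to L_{\underline{l}}^d$, $I \mapsto I \cap os_{\underline{l}}^{d+1}$. I would first check that $\pi$ is order-preserving and surjective (immediate from \cref{lemNaka}). Then I would check that $\pi$ is in fact a lattice morphism: since meets in $L_n^d$ are intersections, $\pi(I \wedge I') = (I\cap I')\cap os_{\underline{l}}^{d+1} = \pi(I)\cap \pi(I') = \pi(I)\wedge\pi(I')$ is automatic, because meets in $L_{\underline{l}}^d$ are also intersections. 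The content is therefore in the joins, and the cleanest route is to observe that the fibers of $\pi$ are exactly the classes of a Kupisch-type equivalence relation on $L_n^d$. Concretely, with $R := os_{\underline{l}}^{d+1}$ (which, from its defining inequality $y_1 \geq y_{d+1}-l_{y_{d+1}}+1$, is an order \emph{filter} of $(os_n^{d+1},\leq_{prod})$, hence of $\mathcal C_P^{\phi}$), the relation $I \equiv I' \iff I\cap R = I'\cap R$ on $L_n^d$ has $\pi$ as its quotient map. So it suffices to show $\equiv$ is a lattice congruence.

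For that I would appeal to \cref{propKupisch}, or rather to the mechanism behind it: a subset of $\mathcal C_P^{\phi}$ of the form $R_K = \{(x,i)\mid x\geq \phi(K_i)\}$ cuts out a congruence. The defining condition of $os_{\underline{l}}^{d+1}$ is, in the coordinates of \cref{thmprincaus} where $\phi(k)=(k,\dots,k)$, exactly of this shape: $(x,i)\in R$ iff $x_1 \geq i - l_i + 1$, i.e. iff $x \geq \phi(\max(0,\,i-l_i+1))$ componentwise — so $R = R_K$ for $K_i := \max(0, i-l_i+1)$. One must verify $K$ is a valid Kupisch sequence for the definition: $K_i \leq i$ (clear since $l_i \geq 1$) and $K$ non-decreasing (this follows from $l_i \leq l_{i-1}+1$, which gives $i - l_i \geq i-1-l_{i-1}$). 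Granting this, $\equiv$ is the Kupisch equivalence relation $\equiv_K$, which is a lattice congruence by \cref{propKupisch}, and $L_{\underline{l}}^d \cong L_n^d/\!\equiv_K$ is a lattice quotient of $L_n^d$.

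Finally, the inheritance of properties: any property of lattices that is preserved under taking lattice quotients (equivalently, under surjective lattice morphisms) passes from $L_n^d$ — which by \cref{propJSD}, \cref{thmprincaus} is join-semidistributive — to $L_{\underline{l}}^d$. Join-semidistributivity is such a property: quotients of JSD lattices are JSD (this is a standard fact, e.g. from \cite{freese1995free}), and one can also see it directly via \cref{lemEquiJSD} since covers and order ideals behave well under the quotient. The main obstacle I anticipate is the bookkeeping in the previous paragraph — translating the Nakayama inequality $y_1 \geq y_{d+1} - l_{y_{d+1}} + 1$ into the form $R_K$ and confirming $K = (K_0,\dots,K_{n-1})$ is a legitimate non-decreasing sequence with $K_i \leq i$ — together with making sure the Kupisch equivalence $\equiv_K$ genuinely has $\pi$ as its quotient map rather than merely refining it; once $R$ is identified as an order filter of the correct type, \cref{propKupisch} does the real work.
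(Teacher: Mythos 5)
Your proposal is correct and follows essentially the same route as the paper: identify $L_{\underline{l}}^d$ via \cref{lemNaka} with the quotient of $L_n^d$ by the relation $I\equiv I' \iff I\cap os_{\underline{l}}^{d+1}=I'\cap os_{\underline{l}}^{d+1}$, recognize $os_{\underline{l}}^{d+1}$ as $R_K$ for $K_i=\max(0,i-l_i+1)$ using \cref{thmprincaus}, and conclude with \cref{propKupisch}. Your extra checks (that $K$ is non-decreasing with $K_i\leq i$, and that meets are intersections on both sides) are correct details the paper leaves implicit.
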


\begin{proof}
 Using \cref{lemNaka}, the elements of $L_{\underline{l}}^d$ identify with the equivalence classes of the relation $\equiv_N$ on $L_n^d$ defined by $I\equiv_N I'$ if and only if $I\cap os_{\underline{l}}^{d+1} = I'\cap os_{\underline{l}}^{d+1}$.
 Recall \cref{thmprincaus} that says that $L_n^d=\Tam\big((os_n^d,\leq_{prod}),\phi\big)$ with $\phi$ defined by $\phi(i)=(i,i,\dots,i)$ for all $i<n$.
 Then $\equiv_N$ is the Kupisch equivalence relation $\equiv_K$ on $L_n^d$ for $K=(K_0,K_1,\dots,K_{n-1})$ where $K_i:=\max(0,i-l_{i} +1)$ for all $i<n$. We conclude using \cref{propKupisch}.
\end{proof}

We conclude with an open question. 

\begin{question}
    Do the higher torsion classes of a $d$-cluster tilting subcategory always form a join-semidistributive lattice ?
\end{question}


\printbibliography

\end{document}